\newtheorem{theorem}{Theorem}[section]
\newtheorem{corollary}[theorem]{Corollary}
\newtheorem{lemma}[theorem]{Lemma}
\newtheorem{proposition}[theorem]{Proposition}
\theoremstyle{definition}
\newtheorem{definition}[theorem]{Definition}
\newtheorem{example}[theorem]{Example}
\newtheorem{remark}[theorem]{Remark}
\newcommand{\F}{\mathbb{F}}
\newcommand{\Char}{\mathop{\rm char}\nolimits}
\begin{document}

\title{Linear degenerations of algebras and certain representations of the
general linear group}
\author{Christakis A. Pallikaros$^*$ and Harold N. Ward$^{\dag}$}
\maketitle

\begin{abstract}
Let $\boldsymbol{\Lambda}\,(=\mathbb{F}^{n^{3}})$, where $\mathbb{F}$ is a field with
$|\mathbb{F}|>2$, be the space of structure vectors of algebras having the $n$-dimensional $\mathbb{F}$-space $V$ as the underlying vector space. 
Also let
$G=GL(V)$. Regarding $\boldsymbol{\Lambda}$ as a $G$-module via the `change
of basis' action of~$G$ on~$V$, we determine the composition factors of various $G$-submodules 
of~$\boldsymbol{\Lambda}$ which correspond to certain important families of
algebras.
This is achieved by introducing the notion of linear degeneration which
allows us to obtain analogues over $\F$ of certain known results on degenerations of algebras.
As a result, the $GL(V)$-structure of~$\boldsymbol{\Lambda}$ is determined.
\end{abstract}

%\date{June 29, 2020}

{\vspace{1mm}}

\noindent {\footnotesize \textit{{\ ${}^{\ast }$~Department of Mathematics
and Statistics, University of Cyprus, PO Box 20537, 1678 Nicosia, Cyprus%
\newline
\phantom{${}^{*}$~}{}E-mail: pallikar@ucy.ac.cy\newline
${}^{\dag }$~Department of Mathematics, University of Virginia,
Charlottesville, VA 22904, USA\newline
\phantom{${}^{\dag}$~}{}E-mail: hnw@virginia.edu\newline
}}}

\noindent \emph{Key Words: } degeneration; algebra; trace form; module;
general linear group

\noindent \emph{2020 Mathematics Subject Classification: } 14D06; 14R20;
20C99

\section{Introduction}

The concept of degeneration has important links with various branches of
mathematics, representation theory being one of them (see, for example,~\cite%
{Kraft1982}). In~\cite{Gorbatsevich1991} Gorbatsevich classified, up to
isomorphism, all $n$-dimensional skew-symmetric algebras over $\mathbb{C}$
which have the Abelian Lie algebra as their only proper degeneration. In
order to achieve this, he made use the theory of algebraic groups and their
representations (see, for example,~\cite{Geck2003}, \cite{Humphreys1991}),
which allowed him to locate various such `level 1' algebras.

In this paper, in some sense, we take a `reverse' direction to that taken in~%
\cite{Gorbatsevich1991}. Our aim is to obtain information about certain
representations of the general linear group, where the representations and
the group are defined over an arbitrary field $\mathbb{F}$, having as a
starting point certain known results on degenerations.
Our motivation comes from~\cite{IvanovaPallikaros2019}, in particular the
way certain results on degenerations of algebras over an arbitrary infinite
field obtained in that paper were used in order to extract information on
the composition series of a certain representation of the general linear
group defined over an arbitrary infinite field. This representation
naturally corresponds to the class of `skew' algebras (see~\cite[Section~4.1]%
{IvanovaPallikaros2019}).

It will be convenient at this point to introduce some notation and recall
some terminology. Let $V$ be an $n$-dimensional vector space over an
arbitrary field $\mathbb{F}$ and let $G=GL(V)$. As in~\cite{Gorbatsevich1991}
and~\cite{IvanovaPallikaros2019} we will be considering the natural `change
of basis' action of $G$ on $\boldsymbol{\Lambda}\,(=\mathbb{F}^{n^3})$, the
space of structure vectors of algebras having $V$ as the underlying space.
This is a linear action of $G$ on $\boldsymbol{\Lambda}$. Recall that for $%
\boldsymbol{\lambda}_1, \boldsymbol{\lambda}_2\in\boldsymbol{\Lambda}$, we
say that $\boldsymbol{\lambda}_1$ degenerates to $\boldsymbol{\lambda}_2$ if
$\boldsymbol{\lambda}_2$ belongs to the Zariski-closure of the $G$-orbit of $%
\boldsymbol{\lambda}_1$ (relative to the above action of $G$). The notion of
degeneration is useful only in the case the field $\mathbb{F}$ is infinite
since everything is closed when $\mathbb{F}$ is finite. As the techniques
used in~\cite[Section~4.1]{IvanovaPallikaros2019} rely heavily on
degenerations, the standing assumption there is that $\mathbb{F}$ is an
(arbitrary) infinite field. One of the main contributions of the present
paper is that, by using an approach which is uniform for $\mathbb{F}$ finite
and $\mathbb{F}$ infinite, the $G$-submodule structure of various submodules
of $\boldsymbol{\Lambda}$ corresponding to certain important classes of
algebras is completely determined (and hence the $G$-module structure of $%
\boldsymbol{\Lambda}$ itself). This is achieved by introducing the notion of
`linear degeneration' which allows us to obtain `linear degeneration
analogues', now over an arbitrary field $\mathbb{F}$ with $|\mathbb{F}|>2$,
of certain results in~\cite{IvanovaPallikaros2019} on degenerations.
Moreover, the use of tools like the adjoint trace form turns out to play a
key role as it allows us to obtain more detailed information (compared to
just using degenerations as in~\cite{IvanovaPallikaros2019}) on various
composition series even in the case $\mathbb{F}$ is infinite.

The paper is organized as follows: In Section~\ref{SectAlgSetUp} we develop
the general set-up for algebras and their ingredients and introduce some
notation. In Sections~\ref{SecSubmodulesCK}, \ref{SectAdjTrace}, \ref{SecM*}%
, \ref{SecM**} we introduce various $G$-submodules of $\boldsymbol{\Lambda}$
which correspond to some important classes of algebras and give defining
conditions and bases for them. Moreover, we show how the adjoint trace form
can be used to obtain information concerning various submodules of $%
\boldsymbol{\Lambda}$ via certain $G$-homomorphisms it allows us to define.
In Section~\ref{SectLinDegen} we introduce the notion of linear degeneration
and show how this can be used to obtain results, which are in a sense
`analogous' to certain results in~\cite{IvanovaPallikaros2019}, but which
are valid for any field with the only exception of some very small fields.
Using an action on a space of semilinear maps in Section~\ref{SectChar2} and
with the help of transvections in Section~\ref{SectTransvec}, we are able to
complete the proof of the various `linear degeneration analogues' we need,
for any field $\mathbb{F}$ with $|\mathbb{F}|>2$. Finally, in Section~\ref%
{SectGLVstructure}, we use the information obtained in the previous sections
in order to determine the $G$-structure of $\boldsymbol{\Lambda}$, the
approach being uniform for any field $\mathbb{F}$ with $|\mathbb{F}|>2$. In
order to achieve this, on the way, we obtain information about the
composition series of some of the important $G$-submodules of $\boldsymbol{%
\Lambda}$. In particular, we determine all composition series for the $G$%
-submodules corresponding to `commutative' and `skew' algebras.

\section{Algebra set-up}

\label{SectAlgSetUp}

In this section we introduce the general set-up for algebras and their
ingredients. The algebras are constructed on a vector space $V$ of dimension
$n$ over the field $\mathbb{F}$. The general linear group $GL(V)=G$ acts on
the left on $V$. We fix a basis $v_{1},\ldots ,v_{n}$ of $V$, which we will
refer to as the standard basis of $V$, and define its dual basis $\widehat{v}%
_{1},\ldots ,\widehat{v}_{n}$ in the usual way: $\widehat{v}%
_{i}(v_{j})=\delta _{ij}$. For $g\in G$, $gv_{j}=\sum_{i}g_{ij}v_{i}$.
Relative to the standard basis of $V$, the matrix for $g\in G$ is $\left[ g%
\right] =\left[ g_{ij}\right] $, and the coordinate vector of $%
v=\sum_i\xi_iv_i\in V$ is $\left[ v\right] =(\xi _{1},\ldots ,\xi _{n})^{T}$%
, a column vector ($^{T}$ for transpose). Thus $\left[ gv\right] =\left[ g%
\right] \left[ v\right] $.

\medskip

The action of $G$ on the dual space $\widehat{V}$ is on the right: for $%
\varphi \in \widehat{V},v\in V$, and $g\in G$, $(\varphi g)(v)=\varphi (gv)$%
. We thus have
\begin{equation*}
\widehat{v}_{i}(gv_{j})=\widehat{v}_{i}(\sum_{k}g_{kj}v_{k})=\sum_{k}g_{kj}%
\widehat{v}_{i}(v_{k})=\sum_{k}g_{kj}\delta _{ik}=g_{ij}.
\end{equation*}%
Hence $\widehat{v}_{i}g=\sum_{j}g_{ij}\widehat{v}_{j}$. So in matrix terms,
with respect to the dual basis $\widehat{v}_{1},\ldots ,\widehat{v}_{n}$
(identifying $\widehat{V}$ with $\mathbb{F}^{n}$ as a space of row-vectors),
the matrix for $g$ is still $[g]$, but multiplying on the right. Note that $%
\widehat{V}$ is irreducible as a right $G$-module since $G$ acts
transitively on $\widehat{V}-\{0\}$.

\medskip

A (not necessarily associative) algebra $\mathfrak{g}$ on $V$ has a bilinear
product $[\,,\,]$. The set of algebras $\mathbf{A}$ having $V$ as the
underlying vector space, forms itself a vector space over $\mathbb{F}$ by
the rules that the product for $\alpha \mathfrak{g}$ is $\alpha \left[ u,v%
\right] $, and the product for the sum $\mathfrak{g}_{1}+\mathfrak{g}_{2}$
is the sum of the products: $\left[ u,v\right] =\left[ u,v\right] _{1}+\left[
u,v\right] _{2}$. (If an algebra has a tag, we use the same tag on the
product symbol for the algebra. This also holds for the structure vectors
below.)

\begin{definition}
\label{DefAction} We define an action of $G$ on $\mathbf{A}$ by the rule
that for $\mathfrak{g}^{\prime }=\mathfrak{g}g$, the product is given by $%
\left[ u,v\right] ^{\prime }=g^{-1}[gu,gv]$. Writing this as $g\left[ u,v%
\right] ^{\prime }=[gu,gv]$, we see that $u\mapsto gu$ is an isomorphism
from $\mathfrak{g}^{\prime }$ to $\mathfrak{g}$.

The structure vector $\Theta (\mathfrak{g})$ of algebra $\mathfrak{g}$ in $%
\mathbf{A}$ is the member $\boldsymbol{\lambda }=(\lambda _{ijk})$ of $%
\mathbf{\Lambda }=\mathbb{F}^{n^{3}}$ with the components $\lambda _{ijk}$
being determined by the basis products: $\left[ v_{i},v_{j}\right]
=\sum_{k}\lambda _{ijk}v_{k}$. We define the action of $G$ on these vectors
by $\Theta (\mathfrak{g})g=\Theta (\mathfrak{g}g)$.
\end{definition}

It is easy to observe that the above actions of $G$ on $\mathbf{A}$ and $%
\boldsymbol{\Lambda}$ respectively are linear. In particular, the map $%
\Theta $ is a $G$-isomorphism from the right $G$-module $\mathbf{A}$ to the
right $G $-module $\boldsymbol{\Lambda}$.

\medskip

It is important to have a formula for $\Theta (\mathfrak{g})g$ in terms of $%
\Theta (\mathfrak{g})$ and the matrix $[g]$. If $\mathfrak{g}^{\prime }=%
\mathfrak{g}g$, then, assuming again that $\Theta (\mathfrak{g})=\boldsymbol{%
\lambda }=(\lambda _{ijk})$, we have
\begin{eqnarray*}
\left[ v_{i},v_{j}\right] ^{\prime } &=&g^{-1}[gv_{i},gv_{j}] \\
&=&g^{-1}\left[ \sum_{a}g_{ai}v_{a},\sum_{b}g_{bj}v_{b}\right] \\
&=&g^{-1}\sum_{a,b,c}g_{ai}g_{bj}\lambda _{abc}v_{c} \\
&=&\sum_{a,b,c}g_{ai}g_{bj}\lambda _{abc}g^{-1}v_{c}.
\end{eqnarray*}%
Denote $\Theta(\mathfrak{g}^{\prime})$ by $\boldsymbol{\lambda}%
^{\prime}=(\lambda_{ijk}^{\prime})$. Then $\boldsymbol{\lambda}%
^{\prime}=\Theta(\mathfrak{g}g)=\Theta (\mathfrak{g})g=\boldsymbol{\lambda }%
g $. Put $\left[ g^{-1}\right] =[g_{ij}^{(-1)}]$, write out $g^{-1}v_{c}$,
and expand the left with the structure coefficients for $\mathfrak{g}%
^{\prime }$ to get%
\begin{eqnarray*}
\sum_{k}\lambda _{ijk}^{\prime }v_{k} &=&\sum_{a,b,c}g_{ai}g_{bj}\lambda
_{abc}\sum_{k}g_{kc}^{(-1)}v_{k} \\
&=&\sum_{k}\left( \sum_{a,b,c}g_{ai}g_{bj}g_{kc}^{(-1)}\lambda _{abc}\right)
v_{k}.
\end{eqnarray*}%
That gives our formula:%
\begin{equation}  \label{basic}
\lambda _{ijk}^{\prime }=\sum_{a,b,c}g_{ai}g_{bj}g_{kc}^{(-1)}\lambda _{abc}.
\end{equation}
This formula can also be interpreted as giving the structure coefficients
for $\mathfrak{g}$ relative to the new basis $v_{1}^{\prime },\ldots
,v_{n}^{\prime }$ with $v_{j}^{\prime }=gv_{j}$. (Compare with~\cite[%
Definition~2.5 and Remark~2.6]{IvanovaPallikaros2019} but be aware of the
slight difference in notation, in particular regarding the standard basis of
$V$.)

\medskip

There is another way to picture things. The product in the algebra $%
\mathfrak{g}$ is a bilinear mapping from $V\times V$ to $V$. Such a mapping
corresponds to a member of $\widehat{V}\otimes \widehat{V}\otimes V$ by the
formula $(\varphi \otimes \psi \otimes w)(u,v)=\varphi (u)\psi (v)w$. If $%
\Theta (\mathfrak{g})=\boldsymbol{\lambda }\,(=(\lambda _{ijk}))$, we
consider the map%
\begin{equation*}
\chi\colon \boldsymbol{\lambda }\mapsto \sum_{i,j,k}\lambda _{ijk}(\widehat{v%
}_{i}\otimes \widehat{v}_{j}\otimes v_{k}),
\end{equation*}%
which correctly gives%
\begin{eqnarray*}
\left( \sum_{i,j,k}\lambda _{ijk}(\widehat{v}_{i}\otimes \widehat{v}%
_{j}\otimes v_{k})\right) (v_{x},v_{y}) &=&\sum_{i,j,k}\lambda _{ijk}\delta
_{ix}\delta _{jy}v_{k} \\
&=&\sum_{k}\lambda _{xyk}v_{k} \\
&=&[v_{x},v_{y}].
\end{eqnarray*}%
What about the $G$-action? It is on the right for the two $\widehat{V}$
factors, but it needs to be put on the right for $V$, and that is done by $%
vg:=g^{-1}v.$ With $\left[ g\right] =\left[ g_{xy}\right] $, we had $%
\widehat{v}_{x}g=\sum_{y}g_{xy}\widehat{v}_{y}$; and now $%
v_{y}g=g^{-1}v_{y}=\sum_{x}g_{xy}^{(-1)}v_{x}$. So%
\begin{eqnarray*}
(\widehat{v}_{a}\otimes \widehat{v}_{b}\otimes v_{c})g &=&\widehat{v}%
_{a}g\otimes \widehat{v}_{b}g\otimes g^{-1}v_{c} \\
&=&\sum_{i}g_{ai}\widehat{v}_{i}\otimes \sum_{j}g_{bj}\widehat{v}_{j}\otimes
\sum_{k}g_{kc}^{(-1)}v_{k} \\
&=&\sum_{i,j,k}g_{ai}g_{bj}g_{kc}^{(-1)}(\widehat{v}_{i}\otimes \widehat{v}%
_{j}\otimes v_{k}).
\end{eqnarray*}%
Thus%
\begin{eqnarray*}
\chi(\boldsymbol{\lambda })g &=&\sum_{a,b,c}\lambda _{abc}(\widehat{v}%
_{a}\otimes \widehat{v}_{b}\otimes v_{c})g \\
&=&\sum_{a,b,c}\lambda _{abc}\sum_{i,j,k}g_{ai}g_{bj}g_{kc}^{(-1)}(\widehat{v%
}_{i}\otimes \widehat{v}_{j}\otimes v_{k}) \\
&=&\sum_{i,j,k}\left( \sum_{a,b,c}\lambda
_{abc}g_{ai}g_{bj}g_{kc}^{(-1)}\right) (\widehat{v}_{i}\otimes \widehat{v}%
_{j}\otimes v_{k}) \\
&=&\sum_{i,j,k}\lambda _{ijk}^{\prime }(\widehat{v}_{i}\otimes \widehat{v}%
_{j}\otimes v_{k}) \\
&=&\chi(\boldsymbol{\lambda }^{\prime })=\chi(\boldsymbol{\lambda }g),
\end{eqnarray*}%
as it should be.

\medskip

\textbf{Notation.} Throughout the paper, we will assume that $n$ is a fixed
positive integer with $n\ge3$, and that $\mathbb{F}$ is an arbitrary field.
(For some of the results we will need to impose the restriction $|\mathbb{F}%
|>2$.) Unless otherwise stated, the $(i,j,k)$-component of the structure
vector $\boldsymbol{\lambda}$ will be denoted by $\lambda _{ijk}$. We will
use $\mathbf{abc}$ to mean the member $\boldsymbol{\lambda}%
\,(=(\lambda_{ijk}))$ of $\mathbf{\Lambda }$ having $\lambda _{abc}=1$ and
all other $\lambda _{ijk}$ equal to $0$. We will refer to the basis of $%
\boldsymbol{\Lambda}$ consisting of the $n^3$ structure vectors of this form
as the standard basis of $\boldsymbol{\Lambda}$.

\medskip

It will be convenient in various parts of the paper, in particular when we
give defining conditions or a basis for a $G$-submodule of $\boldsymbol{%
\Lambda}$, to use the following:

\medskip

\textbf{Convention ($\ddag$).} Different letters in the subscripts for the
components of a structure vector represent different numerical values and
similarly, for the letters appearing in the elements $\mathbf{abc}$ of the
standard basis of $\boldsymbol{\Lambda}$.

\medskip

In the course of the discussion in the paper we will be pointing out the
places at which this convection will actually be in force.

\section{The $G$-submodules $\mathcal{C}$ and $\mathcal{K}$}

\label{SecSubmodulesCK}

In this section we discuss two special $G$-submodules of $\boldsymbol{\Lambda%
}$, namely $\mathcal{C}$ and $\mathcal{K}$, which, among them, contain all
composition factors of $\boldsymbol{\Lambda}$.

\subsection{Defining conditions and bases}

\label{SubsecDefCondBases}

Convention~($\ddag$) will be in force for the whole of Subsection~\ref%
{SubsecDefCondBases}.

\medskip

The subset $\mathcal{C}$ of $\boldsymbol{\Lambda}$ is defined by the
requirement that $\boldsymbol{\lambda}=\Theta(\mathfrak{g})$ is a member of $%
\mathcal{C}$, precisely when $[u,v]=[v,u]$ for all $u,v\in V$, where $[,]$
denotes the product in the algebra $\mathfrak{g}$.

It follows that the conditions
\begin{gather*}
\lambda_{ijj}=\lambda_{jij}  \notag \\
\lambda_{ijk}=\lambda_{jik}
\end{gather*}
form a set of defining conditions for~$\mathcal{C}$. In particular $\mathcal{%
C}$ is a subspace of $\boldsymbol{\Lambda}$. Comparing with Definition~\ref%
{DefAction} and assuming that $[,]$ is commutative, we see that $%
[u,v]^{\prime}=g^{-1}[gu,gv]=g^{-1}[gv,gu]=[v,u]^{\prime}$. It follows that $%
\mathcal{C}$ is a $G$-submodule of~$\boldsymbol{\Lambda}$.

\medskip

The space $\mathcal{C}$ has the following set of structure vectors as a
basis:
\begin{equation*}
\begin{tabular}{cc}
vector & number \\
$\mathbf{iii}$ & $n$ \\
$\mathbf{iij}$ & $n(n-1)$ \\
$\mathbf{iji}+\mathbf{jii}$ & $n(n-1)$ \\
$\mathbf{ijk}+\mathbf{jik}$ & $\binom{n}{2}(n-2)$%
\end{tabular}%
\end{equation*}%
In particular, $\dim\mathcal{C}=n^{3}/2+n^{2}/2$. Note that in the last item
of the table above the distinct members $\mathbf{ijk}+\mathbf{jik}$ are
obtained by imposing the restriction $i<j$.

\medskip

The subset $\mathcal{K}$ of $\boldsymbol{\Lambda}$ is defined by the
requirement that $\boldsymbol{\lambda}=\Theta(\mathfrak{g})$ belongs to $%
\mathcal{K}$, precisely when $[v,v]=0$ for all $v\in V$.
By~\cite[Remark~2.7]{IvanovaPallikaros2019} and item (ii) before that,
the conditions
\begin{gather*}
\lambda_{iii}=0,\quad \lambda_{iij}=0  \notag \\
\lambda_{ijk}+\lambda_{jik}=0  \notag \\
\lambda_{iji}+\lambda_{jii}=0
\end{gather*}
form a set of of defining conditions for $\mathcal{K}$. As in the case of $%
\mathcal{C}$, it is again easy to observe that $\mathcal{K}$ is a $G$%
-submodule of $\boldsymbol{\Lambda}$. Moreover, $\mathcal{K}$ has the
following set of structure vectors as a basis:
\begin{equation*}
\begin{tabular}{cc}
vector & number \\
$\mathbf{iji}-\mathbf{jii}$ & $n(n-1)$ \\
$\mathbf{ijk}-\mathbf{jik}$ & $\binom{n}2(n-2)$%
\end{tabular}%
\end{equation*}%
So $\dim\mathcal{K}=\frac12n^3-\frac12n^2=n^3-\dim\mathcal{C}$.

\begin{remark}
\label{RemSkewAlg} (i) If $\mathrm{char}\,\mathbb{F}\ne2$, then $\mathcal{C}%
\cap\mathcal{K}=0$, so $\boldsymbol{\Lambda}=\mathcal{C}\oplus\mathcal{K}$.

(ii) If $\mathop{\rm char}\nolimits\mathbb{F}=2$, then $\mathcal{K}\subset
\mathcal{C}$. Also note that our proposed basis for $\mathcal{K}$ is
contained in our proposed basis for $\mathcal{C}$. In particular, the cosets
$\mathbf{iii}+\mathcal{K}$ and $\mathbf{iij}+\mathcal{K}$,
form a basis for~$\mathcal{C}/\mathcal{K}$.
\end{remark}

\subsection{The `opposite' algebra}

For an algebra $\mathfrak{h}$ with product $[,]$, the \textbf{opposite
algebra} $\widetilde{\mathfrak{h}}$ has product $\widetilde{[\,,\,]}$
defined by $\widetilde{[u,v]}=[v,u]$. If $\Theta(\mathfrak{h})=\boldsymbol{%
\mu}$ with $\boldsymbol{\mu}=(\mu_{ijk})$, we will write $\Theta(\widetilde{%
\mathfrak{h}})=\widetilde{\boldsymbol{\mu}}$ with $\widetilde{\boldsymbol{\mu%
}}=(\widetilde{\mu}_{ijk})$. Clearly $(\widetilde{\tilde{\boldsymbol{\mu}}})=%
\boldsymbol{\mu}$ and $\widetilde\mu_{ijk}=\mu_{jik}$ for all $i,j,k$.

Suppose now that $\mathfrak{g}\in\boldsymbol{A}$ has product $[,]$ and let $%
\boldsymbol{\lambda}=\Theta(\mathfrak{g})$. Suppose further that $g\in G$ is
the transition map from the standard basis $v_1,\ldots,v_n$ to the basis $%
v_1^{\prime},\ldots,v_n^{\prime}$ of $V$, so that $v_i^{\prime}=gv_i$ for $%
i=1,\ldots,n$. It is then easy to observe that for all $i$, $j$ and $k$, the
coefficient of $\mathbf{ijk}$ when we express either $(\widetilde{%
\boldsymbol{\lambda}})g$ or $(\widetilde{\boldsymbol{\lambda }g})$ as a
linear combination of the elements of the standard basis of $\boldsymbol{%
\Lambda}$, equals the coefficient of $v_k^{\prime}$ when we express $%
[v_j^{\prime},v_i^{\prime}]$ as a linear combination of the elements of the
basis $v_1^{\prime},\ldots,v_n^{\prime}$ of $V$. We have proved:

\begin{lemma}
We have that $(\widetilde{\boldsymbol{\lambda}})g=(\widetilde{\boldsymbol{%
\lambda }g})$ for all $\boldsymbol{\lambda}\in\boldsymbol{\Lambda}$ and for
all $g\in G$. Hence, the maps $\boldsymbol{\lambda}\mapsto\widetilde{%
\boldsymbol{\lambda}}$ and $\boldsymbol{\lambda}\mapsto\boldsymbol{\lambda}+%
\widetilde{\boldsymbol{\lambda}}$ from $\boldsymbol{\Lambda}$ to $%
\boldsymbol{\Lambda}$ are $G$-homomorphisms.
\end{lemma}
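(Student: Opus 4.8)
The plan is to prove the identity $(\widetilde{\boldsymbol{\lambda}})g=(\widetilde{\boldsymbol{\lambda}g})$ by a direct computation at the level of components, using the explicit transformation formula~\eqref{basic}. The remaining assertions about $G$-homomorphisms then follow formally, so the only substantive content is this single identity.

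First I would fix $\boldsymbol{\lambda}=(\lambda_{ijk})$ and $g\in G$, and unwind both sides in terms of the standard basis. For the right-hand side, I would first form $\boldsymbol{\lambda}g=(\lambda'_{ijk})$ using~\eqref{basic}, namely $\lambda'_{ijk}=\sum_{a,b,c}g_{ai}g_{bj}g^{(-1)}_{kc}\lambda_{abc}$, and then apply the tilde, which by definition swaps the first two subscripts: the $(i,j,k)$-component of $\widetilde{\boldsymbol{\lambda}g}$ is $\lambda'_{jik}=\sum_{a,b,c}g_{aj}g_{bi}g^{(-1)}_{kc}\lambda_{abc}$. For the left-hand side, I would first apply the tilde to get $\widetilde{\boldsymbol{\lambda}}=(\widetilde{\lambda}_{ijk})$ with $\widetilde{\lambda}_{ijk}=\lambda_{jik}$, and then apply $g$ via~\eqref{basic} to obtain the $(i,j,k)$-component
\[
\sum_{a,b,c}g_{ai}g_{bj}g^{(-1)}_{kc}\widetilde{\lambda}_{abc}
=\sum_{a,b,c}g_{ai}g_{bj}g^{(-1)}_{kc}\lambda_{bac}.
\]
Relabelling the summation indices $a\leftrightarrow b$ in this last expression turns it into $\sum_{a,b,c}g_{bi}g_{aj}g^{(-1)}_{kc}\lambda_{abc}$, which is exactly the component of $\widetilde{\boldsymbol{\lambda}g}$ computed above. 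This establishes the identity componentwise, hence as an equality in $\boldsymbol{\Lambda}$. (Alternatively, the excerpt already sketches the conceptual reason: both sides record the coefficient of $v_k'$ in $[v_j',v_i']$ relative to the transformed basis, so one may simply invoke that observation instead of repeating the index computation.)

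With the identity in hand, the homomorphism claims are immediate. The tilde map is clearly $\mathbb{F}$-linear since $\widetilde{\lambda}_{ijk}=\lambda_{jik}$ is linear in the components, and the identity $(\widetilde{\boldsymbol{\lambda}})g=\widetilde{(\boldsymbol{\lambda}g)}$ says precisely that this linear map commutes with the right $G$-action, so $\boldsymbol{\lambda}\mapsto\widetilde{\boldsymbol{\lambda}}$ is a $G$-homomorphism. For the second map, linearity of $\boldsymbol{\lambda}\mapsto\boldsymbol{\lambda}+\widetilde{\boldsymbol{\lambda}}$ is clear, and $G$-equivariance follows by combining the linearity of the $G$-action with the identity just proved: $(\boldsymbol{\lambda}+\widetilde{\boldsymbol{\lambda}})g=\boldsymbol{\lambda}g+(\widetilde{\boldsymbol{\lambda}})g=\boldsymbol{\lambda}g+\widetilde{(\boldsymbol{\lambda}g)}$, which is the image of $\boldsymbol{\lambda}g$ under the map.

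I do not expect a genuine obstacle here; the proof is a bookkeeping exercise. The one point demanding care is keeping the roles of the two dual-space factors straight when applying the tilde: since $\widetilde{\mathfrak{h}}$ reverses the product, the tilde operation must be applied only to the first two ($\widehat{V}$) subscripts and must leave the third ($V$) subscript fixed, and one must be consistent about whether the swap is performed before or after the change of basis. The index relabelling step that reconciles the two orders is the crux, and it works precisely because the same matrix $[g]$ governs both $\widehat{V}$ factors symmetrically.
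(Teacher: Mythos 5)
Your proof is correct. The identity is verified exactly as you say: the $(i,j,k)$-component of $(\widetilde{\boldsymbol{\lambda}})g$ is $\sum_{a,b,c}g_{ai}g_{bj}g^{(-1)}_{kc}\lambda_{bac}$, and the relabelling $a\leftrightarrow b$ turns this into $\sum_{a,b,c}g_{aj}g_{bi}g^{(-1)}_{kc}\lambda_{abc}$, which is the $(j,i,k)$-component of $\boldsymbol{\lambda}g$, i.e.\ the $(i,j,k)$-component of $\widetilde{\boldsymbol{\lambda}g}$; and you correctly isolate the reason this works, namely that the two $\widehat{V}$ factors are acted on by the same matrix $[g]$, so the swap of the first two subscripts commutes with the action. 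The deduction of the two $G$-homomorphism statements from linearity plus the identity is also fine. The paper itself argues slightly differently: rather than manipulating indices, it observes that both $(\widetilde{\boldsymbol{\lambda}})g$ and $\widetilde{\boldsymbol{\lambda}g}$ record the coefficient of $v_k'$ in $[v_j',v_i']$ with respect to the transformed basis $v_i'=gv_i$, so the two sides agree because they are two descriptions of the structure constants of the opposite algebra in the new basis. Your explicit computation via \eqref{basic} is a fully written-out substitute for what the paper leaves as ``easy to observe''; the conceptual route is shorter and makes the reason for the identity transparent, while your version has the advantage of being checkable line by line without any appeal to interpretation. Either is acceptable, and you note the conceptual alternative yourself.
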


Writing $\widetilde{\mathcal{X}}=\{\widetilde{\boldsymbol{\lambda}}\colon
\boldsymbol{\lambda}\in\mathcal{X}\}$ for a subset $\mathcal{X}$ of $%
\boldsymbol{\Lambda}$ we see that $\widetilde{\mathcal{X}}$ is a $G$%
-submodule of $\boldsymbol{\Lambda}$ whenever $\mathcal{X}$ is a $G$%
-submodule of $\boldsymbol{\Lambda}$. Since $\boldsymbol{\lambda}=\widetilde{%
\boldsymbol{\lambda}}$ (resp., $\boldsymbol{\lambda}=-\widetilde{\boldsymbol{%
\lambda}}$) for each $\boldsymbol{\lambda}\in\mathcal{C}$ (resp., $%
\boldsymbol{\lambda}\in\mathcal{K}$) we see that $\mathcal{C}=\widetilde{%
\mathcal{C}}$ (resp., $\mathcal{K}=\widetilde{\mathcal{K}}$). Moreover,
we have that $\boldsymbol{\lambda}+\widetilde{\boldsymbol{\lambda}}\in%
\mathcal{C}$ and $\boldsymbol{\lambda}-\widetilde{\boldsymbol{\lambda}}\in%
\mathcal{K}$ for every $\boldsymbol{\lambda}\in\boldsymbol{\Lambda}$.

Suppose now that $\mathrm{char}\,\mathbb{F}=2$ and consider the map $%
\boldsymbol{\lambda}\mapsto\boldsymbol{\lambda}+\widetilde{\boldsymbol{%
\lambda}}\,( = \boldsymbol{\lambda }- \widetilde{\boldsymbol{\lambda}})$
from $\boldsymbol{\Lambda}$ to~$\boldsymbol{\Lambda}$. This is a $G$%
-homomorphism having $\mathcal{C}$ as its kernel and $\mathcal{K}$ as its
image, as is easily seen from the defining conditions for $\mathcal{C}$ and $%
\mathcal{K}$. Hence, in characteristic 2, we have a filtration $0\subset
\mathcal{K}\subset\mathcal{C}\subset\boldsymbol{\Lambda}$ with $\boldsymbol{%
\Lambda}/\mathcal{C}$ being $G$-isomorphic to $\mathcal{K}$.

\section{Adjoint trace form and unimodular algebras}

\label{SectAdjTrace}

Following \cite[Section 4.1]{IvanovaPallikaros2019}, we define the adjoint
map for an algebra $\mathfrak{g}$ to be $\mathrm{ad}_{u}:v\mapsto \lbrack
u,v]$. With $\boldsymbol{\lambda }\,(=(\lambda_{ijk}))=\Theta (\mathfrak{g})$%
, we set up the \textbf{adjoint trace form}, the pairing $\mathrm{tr}(%
\boldsymbol{\lambda },u)=\mathrm{tr}(\mathrm{ad}_{u})$. A direct computation
shows that if $u=\sum \xi _{i}v_{i}$, a linear combination of the elements
of the standard basis $v_1,\ldots,v_n$ of $V$, then
\begin{equation}
\mathrm{tr}(\mathrm{ad}_{u})=\sum_{i,j}\xi _{i}\lambda _{ijj}.
\label{traceformula}
\end{equation}

\begin{lemma}
\label{TrPairing}If $g\in G$, then $\mathrm{tr}(\boldsymbol{\lambda }g,u)=%
\mathrm{tr}(\boldsymbol{\lambda },gu)$.
\end{lemma}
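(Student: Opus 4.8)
The plan is to recognize the adjoint map of the transformed algebra as a conjugate of an adjoint map of the original algebra, and then invoke the conjugation-invariance of the trace. First I would unwind the left-hand side. Since $\boldsymbol{\lambda}g=\Theta(\mathfrak{g}g)$, the structure vector $\boldsymbol{\lambda}g$ is that of the algebra $\mathfrak{g}'=\mathfrak{g}g$, whose product $[\,,\,]'$ is given by Definition~\ref{DefAction} as $[u,v]'=g^{-1}[gu,gv]$. Thus $\mathrm{tr}(\boldsymbol{\lambda}g,u)=\mathrm{tr}(\mathrm{ad}'_u)$, where $\mathrm{ad}'_u$ denotes the adjoint map $v\mapsto[u,v]'$ in $\mathfrak{g}'$, while $\mathrm{tr}(\boldsymbol{\lambda},gu)=\mathrm{tr}(\mathrm{ad}_{gu})$ refers to the adjoint map $w\mapsto[gu,w]$ in the original algebra $\mathfrak{g}$.

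The key step is the observation that, as linear endomorphisms of $V$,
\[
\mathrm{ad}'_u=g^{-1}\circ\mathrm{ad}_{gu}\circ g .
\]
Indeed, for any $v\in V$ one checks directly that $(g^{-1}\circ\mathrm{ad}_{gu}\circ g)(v)=g^{-1}[gu,gv]=[u,v]'=\mathrm{ad}'_u(v)$. Since the trace is invariant under conjugation by the invertible map $g$, we conclude that $\mathrm{tr}(\mathrm{ad}'_u)=\mathrm{tr}(\mathrm{ad}_{gu})$, which is exactly the desired identity $\mathrm{tr}(\boldsymbol{\lambda}g,u)=\mathrm{tr}(\boldsymbol{\lambda},gu)$.

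I expect no serious obstacle here: the argument is essentially a conjugation identity, and the only point requiring care is the correct unwinding of which algebra and which product correspond to $\boldsymbol{\lambda}g$, so that the factor $g^{-1}$ supplied by Definition~\ref{DefAction} is matched with the $g$ acting on the argument $u$. As an alternative, purely computational route, one could instead substitute the explicit transformation formula~\eqref{basic} for $\lambda'_{ijk}$ into the trace formula~\eqref{traceformula}: writing $u=\sum_i\xi_iv_i$, one computes $\mathrm{tr}(\boldsymbol{\lambda}g,u)=\sum_{i,j}\xi_i\lambda'_{ijj}$, and then collects the resulting sums over the matrix entries of $[g]$ and $[g^{-1}]$, using $\sum_j g_{bj}g^{(-1)}_{jc}=\delta_{bc}$, to recover $\sum_{a,b}(\sum_i g_{ai}\xi_i)\lambda_{abb}$, which is $\mathrm{tr}(\boldsymbol{\lambda},gu)$ since the coordinates of $gu$ are given by $[gu]=[g][u]$. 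This second approach is more laborious but avoids any appeal to coordinate-free reasoning.
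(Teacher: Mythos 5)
Your proof is correct and follows essentially the same route as the paper: both identify $\mathrm{ad}'_u$ as the conjugate $g^{-1}\circ\mathrm{ad}_{gu}\circ g$ and conclude by the conjugation-invariance of the trace. The extra computational alternative you sketch is fine but unnecessary.
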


\begin{proof}
Let $\mathfrak{g}^{\prime }=\Theta ^{-1}(\boldsymbol{\lambda }g)$.
Then $\mathrm{%
tr}(\boldsymbol{\lambda }g,u)$ is the trace of the map $v\mapsto \lbrack
u,v]^{\prime }$. But $[u,v]^{\prime }=g^{-1}[gu,gv]$
(see Definition~\ref{DefAction}), and the
map is the composition $v\mapsto gv\mapsto \lbrack gu,gv]\mapsto
g^{-1}[gu,gv]$. This composition is the conjugate by $g$ of the middle map $%
w\mapsto \lbrack gu,w]$. So $\mathrm{tr}(\boldsymbol{\lambda }g,u)=\mathrm{tr%
}(\boldsymbol{\lambda },gu)$, as claimed.
\end{proof}

The pairing $\mathrm{tr}(\boldsymbol{\lambda },u)$ is thus bilinear and $G$%
-invariant (left action on $V$, right on $\boldsymbol{\Lambda }$). Define $%
\mathrm{tr}_{\boldsymbol{\lambda }}$ to be the member of $\widehat{V}$ given
by $u\mapsto \mathrm{tr}(\boldsymbol{\lambda },u)$. Since%
\begin{equation*}
(\mathrm{tr}_{\boldsymbol{\lambda }}g)(u)=\mathrm{tr}_{\boldsymbol{\lambda }%
}(gu)=\mathrm{tr}(\boldsymbol{\lambda },gu)=\mathrm{tr}(\boldsymbol{\lambda }%
g,u)=\mathrm{tr}_{\boldsymbol{\lambda }g}(u),
\end{equation*}
$\mathrm{tr}:\boldsymbol{\lambda }\mapsto \mathrm{tr}_{\boldsymbol{\lambda }%
} $ from $\boldsymbol{\Lambda }$ to $\widehat{V}$ is a $G$-homomorphism.
Recall that $\widehat{v_{1}},\ldots ,\widehat{v_{n}}$ is the dual basis of $%
v_{1},\ldots ,v_{n}$: $\widehat{v_{i}}(v_{j})=\delta _{ij}$. Then (\ref%
{traceformula}) gives
\begin{equation}
\mathrm{tr}_{\boldsymbol{\lambda }}=\sum_{i}\left( \sum_{j}\lambda
_{ijj}\right) \widehat{v_{i}}.  \label{trbydual}
\end{equation}%
In particular, $\mathrm{tr}_{\mathbf{iii}}=\widehat{v_{i}}$. Thus the map $%
\boldsymbol{\lambda }\mapsto \mathrm{tr}_{\boldsymbol{\lambda }}$ is a $G$%
-homomorphism of $\boldsymbol{\Lambda }$ onto $\widehat{V}$. We denote its
kernel by $\mathcal{T}$. (The members of $\Theta^{-1}(\mathcal{T})$ are
known as unimodular algebras.) We have:

\begin{proposition}
\label{PropL/TIsomToHatV} $\boldsymbol{\Lambda }/\mathcal{T}$ is $G$%
-isomorphic to $\widehat{V}$. Thus $\mathcal{T}$ has codimension $n$ in $%
\boldsymbol{\Lambda }$.
\end{proposition}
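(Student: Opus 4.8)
The plan is to apply the first isomorphism theorem for $G$-modules to the $G$-homomorphism $\mathrm{tr}\colon\boldsymbol{\lambda}\mapsto\mathrm{tr}_{\boldsymbol{\lambda}}$ from $\boldsymbol{\Lambda}$ to $\widehat{V}$, whose kernel is by definition $\mathcal{T}$. We have already verified that $\mathrm{tr}$ is a $G$-homomorphism, so the only thing left to establish is that it is surjective; granting that, the isomorphism theorem immediately delivers a $G$-isomorphism $\boldsymbol{\Lambda}/\mathcal{T}\cong\widehat{V}$.

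For surjectivity I would invoke the computation recorded just before the statement, namely $\mathrm{tr}_{\mathbf{iii}}=\widehat{v_{i}}$ for each $i=1,\ldots,n$. Thus the image of $\mathrm{tr}$ contains every member $\widehat{v_{1}},\ldots,\widehat{v_{n}}$ of the dual basis; since these span $\widehat{V}$ and the image is a subspace, $\mathrm{tr}$ is onto. (Alternatively one can appeal to the irreducibility of $\widehat{V}$ as a right $G$-module, noted earlier in the excerpt: the image is a nonzero $G$-submodule of $\widehat{V}$, hence all of it.)

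The codimension claim then follows at once, since a $G$-isomorphism is in particular a linear isomorphism, giving $\dim(\boldsymbol{\Lambda}/\mathcal{T})=\dim\widehat{V}=n$ and so $\mathcal{T}$ has codimension $n$ in $\boldsymbol{\Lambda}$.

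There is no genuine obstacle in this argument: its substance rests entirely on the two facts already in hand, that $\mathrm{tr}$ is a $G$-homomorphism and that $\mathrm{tr}_{\mathbf{iii}}=\widehat{v_{i}}$. The single point meriting care is to confirm surjectivity rather than assume it, and the values of $\mathrm{tr}$ on the standard basis vectors $\mathbf{iii}$ supply exactly this.
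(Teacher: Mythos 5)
Your proof is correct and follows the paper's own route exactly: the paper deduces the proposition from the preceding observation that $\mathrm{tr}_{\mathbf{iii}}=\widehat{v_{i}}$, which makes $\mathrm{tr}$ a surjective $G$-homomorphism onto $\widehat{V}$ with kernel $\mathcal{T}$, and then applies the first isomorphism theorem. Nothing is missing.
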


In \cite[Definition 4.13]{IvanovaPallikaros2019}, the $G$-submodule $%
\mathcal{U}$ is defined to be $\mathcal{K}\cap \mathcal{T}$.
Equation (\ref{trbydual}) gives $\mathrm{tr}_{\mathbf{ijj}-\mathbf{jij}}=%
\widehat{v_{i}}$, for $i\ne j$, so the map $\boldsymbol{\lambda }\mapsto
\mathrm{tr}_{\boldsymbol{\lambda }}$ from $\mathcal{K}$ to $\widehat{V}$ is
also surjective in view of the fact that the structure vectors $\mathbf{ijj}-%
\mathbf{jij}$ belong to $\mathcal{K}$, as we have seen in Section~\ref%
{SecSubmodulesCK}. Thus $\mathcal{K}/\mathcal{U}\backsimeq \widehat{V}$,
too, verifying that $\dim \mathcal{U}=(n^{3}-n^{2})/2-n$.

Next, we restrict the map $\mathrm{tr}$ to the submodule $\mathcal{C}$ of $%
\boldsymbol{\Lambda}$. Let $\mathcal{N}$ be the kernel of this restriction.
Clearly, $\mathcal{N}=\mathcal{C}\cap\mathcal{T}$ and $\mathcal{N}$ is a $G$%
-submodule of $\boldsymbol{\Lambda}$.
Since, as we have seen, $\mathrm{tr}_{\mathbf{iii}}=\widehat{v_{i}}$ and $%
\mathbf{iii}\in\mathcal{C}$ for all $i$, this restricted map is also
surjective. It follows that $\mathcal{C}/\mathcal{N}$ and $\widehat V$ are $%
G $-isomorphic. Summing up:

\begin{proposition}
\label{PropIsomDualV} $\mathcal{K}/\mathcal{U}$, $\mathcal{C}/\mathcal{N}$
and $\widehat V$ are $G$-isomorphic.
\end{proposition}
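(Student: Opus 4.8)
The plan is to establish the two claimed $G$-isomorphisms separately and then combine them, since the statement asserts that $\mathcal{K}/\mathcal{U}$, $\mathcal{C}/\mathcal{N}$, and $\widehat{V}$ are all mutually $G$-isomorphic. In fact, both of these isomorphisms have already been exhibited in the paragraphs immediately preceding the proposition, so the proof amounts to recording them cleanly and invoking transitivity of isomorphism. First I would recall that the map $\mathrm{tr}\colon\boldsymbol{\lambda}\mapsto\mathrm{tr}_{\boldsymbol{\lambda}}$ is a $G$-homomorphism from $\boldsymbol{\Lambda}$ to $\widehat{V}$, a fact established just after Lemma~\ref{TrPairing}.

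The key steps for the first isomorphism are as follows. I would consider the restriction of $\mathrm{tr}$ to the $G$-submodule $\mathcal{K}$; being the restriction of a $G$-homomorphism to a submodule, this is itself a $G$-homomorphism $\mathcal{K}\to\widehat{V}$, and its kernel is by definition $\mathcal{K}\cap\mathcal{T}=\mathcal{U}$. The essential point is surjectivity. Using formula (\ref{trbydual}), one computes $\mathrm{tr}_{\mathbf{ijj}-\mathbf{jij}}=\widehat{v_{i}}$ for $i\neq j$, and since each structure vector $\mathbf{ijj}-\mathbf{jij}$ lies in $\mathcal{K}$ (the defining conditions for $\mathcal{K}$ in Section~\ref{SecSubmodulesCK} are satisfied), the image contains every dual basis vector $\widehat{v_{i}}$, hence all of $\widehat{V}$. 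The first isomorphism theorem then gives $\mathcal{K}/\mathcal{U}\cong\widehat{V}$ as $G$-modules. The second isomorphism proceeds identically with $\mathcal{C}$ in place of $\mathcal{K}$: the restriction of $\mathrm{tr}$ to $\mathcal{C}$ has kernel $\mathcal{C}\cap\mathcal{T}=\mathcal{N}$, and surjectivity follows from $\mathrm{tr}_{\mathbf{iii}}=\widehat{v_{i}}$ together with $\mathbf{iii}\in\mathcal{C}$, yielding $\mathcal{C}/\mathcal{N}\cong\widehat{V}$.

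Finally I would combine the two isomorphisms: since both $\mathcal{K}/\mathcal{U}$ and $\mathcal{C}/\mathcal{N}$ are $G$-isomorphic to $\widehat{V}$, they are $G$-isomorphic to each other, completing the proof of the proposition.

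There is no genuine obstacle here, since all the analytic content has been front-loaded into the discussion preceding the statement; the only thing to verify with care is that the relevant structure vectors ($\mathbf{ijj}-\mathbf{jij}$ for $\mathcal{K}$, and $\mathbf{iii}$ for $\mathcal{C}$) really lie in the respective submodules and really map onto a spanning set of $\widehat{V}$ under $\mathrm{tr}$. Both facts follow at once from the defining conditions and bases recorded in Section~\ref{SecSubmodulesCK} together with equation (\ref{trbydual}). I would also remark that the surjectivity argument simultaneously re-confirms the dimension count $\dim\mathcal{U}=(n^{3}-n^{2})/2-n$ via the rank-nullity relation $\dim\mathcal{U}=\dim\mathcal{K}-n$.
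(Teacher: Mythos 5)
Your proposal is correct and follows essentially the same route as the paper: the paper's own justification (given in the discussion immediately preceding the proposition) restricts the $G$-homomorphism $\mathrm{tr}$ to $\mathcal{K}$ and to $\mathcal{C}$, identifies the kernels as $\mathcal{U}=\mathcal{K}\cap\mathcal{T}$ and $\mathcal{N}=\mathcal{C}\cap\mathcal{T}$, and establishes surjectivity via $\mathrm{tr}_{\mathbf{ijj}-\mathbf{jij}}=\widehat{v_{i}}$ and $\mathrm{tr}_{\mathbf{iii}}=\widehat{v_{i}}$, exactly as you do. No gaps.
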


The members of $\mathcal{N}$ are the structure vectors $\boldsymbol{\lambda }
$ in $\mathcal{C}$ for which $\sum \lambda _{ijj}=0$.
So $\mathcal{N}$ has basis
(assuming that Convention~($\ddag$) is in force for the following table)
\begin{equation}  \label{basisN}
\begin{tabular}{cc}
vector & number \\
$\mathbf{ijk}+\mathbf{jik}$ & $n(n-1)(n-2)/2$ \\
$\mathbf{iij}$ & $n(n-1)$ \\
$\mathbf{ijj}+\mathbf{jij}-\mathbf{iii}$ & $n(n-1)$%
\end{tabular}%
\end{equation}%
giving $\dim \mathcal{N}=n^{3}/2+n^{2}/2-n$, in line with the $G$%
-isomorphism $\mathcal{C}/\mathcal{N}\backsimeq \widehat{V}$.

Imitating the discussion at the beginning of this section, let $\widetilde{%
\mathrm{tr}}(\boldsymbol{\lambda},u)=\mathrm{tr}(v\mapsto[v,u])$ be the
opposite trace map, and define $\widetilde{\mathrm{tr}}_{\boldsymbol{\lambda}%
}$ to be the member of $\widehat V$ given by $u\mapsto\widetilde{\mathrm{tr}}%
(\boldsymbol{\lambda},u)$. Note that $\widetilde{\mathrm{tr}}(\boldsymbol{%
\lambda},u)=\mathrm{tr}(v\mapsto\widetilde{[u, v]})=\mathrm{tr}(\widetilde{%
\boldsymbol{\lambda}},u)$. It follows that $\widetilde{\mathrm{tr}}\colon%
\boldsymbol{\lambda}\mapsto\widetilde{\mathrm{tr}}_{\boldsymbol{\lambda}}\,(=%
\mathrm{tr}_{\widetilde{\boldsymbol{\lambda}}})$ is a surjective $G$%
-homomorphism from $\boldsymbol{\Lambda}$ to $\widehat V$ and, moreover,
\begin{equation}  \label{trlabdatilde}
\widetilde{\mathrm{tr}}_{\boldsymbol{\lambda}}=\sum_{i}\left(
\sum_{j}\lambda _{jij}\right) \widehat{v_{i}}.
\end{equation}
Clearly, $\ker\widetilde{\mathrm{tr}}=\{\boldsymbol{\lambda}\colon
\widetilde{\boldsymbol{\lambda}}\in\mathcal{T}\}=\widetilde{\mathcal{T}}$.
In particular, $\boldsymbol{\Lambda}/\widetilde{\mathcal{T}}$ and $\widehat
V $ are $G$-isomorphic.

We now restrict the map $\mathop{\mathrm{tr}}\nolimits$ to the submodule $%
\widetilde{\mathcal{T}}$ of $\boldsymbol{\Lambda}$. Clearly, $\mathcal{T}\cap%
\widetilde{\mathcal{T}}$ is the kernel of this restriction. Let $\boldsymbol{%
\mu}=\mathbf{122}+\mathbf{212}-\mathbf{313}$. It is easy to check that $%
\widetilde{\boldsymbol{\mu}}\in\mathcal{T}$ (so $\boldsymbol{\mu}\in%
\widetilde{\mathcal{T}}$) and that $\mathop{\mathrm{tr}}\nolimits_{%
\boldsymbol{\mu}}=\hat v_1$. Since $\widehat V$ is an irreducible $G$%
-module, we have:

\begin{proposition}
$\widehat V$, $\widetilde{\mathcal{T}}/(\mathcal{T}\cap\widetilde{\mathcal{T}%
})$ (and, by similar argument, ${\mathcal{T}}/(\mathcal{T}\cap\widetilde{%
\mathcal{T}})$) are $G$-isomorphic. In particular, $\dim(\mathcal{T}\cap%
\widetilde{\mathcal{T}})=\dim\boldsymbol{\Lambda}-2n=\dim\mathcal{U}+\dim%
\mathcal{N}$.
\end{proposition}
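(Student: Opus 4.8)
The plan is to realize the isomorphism via the restriction of the trace homomorphism $\mathrm{tr}\colon\boldsymbol{\Lambda}\to\widehat{V}$ to the submodule $\widetilde{\mathcal{T}}$. First I would observe that this restriction $\mathrm{tr}|_{\widetilde{\mathcal{T}}}$ is a $G$-homomorphism whose kernel is exactly $\{\boldsymbol{\lambda}\in\widetilde{\mathcal{T}}\colon\mathrm{tr}_{\boldsymbol{\lambda}}=0\}=\widetilde{\mathcal{T}}\cap\mathcal{T}=\mathcal{T}\cap\widetilde{\mathcal{T}}$, exactly as already noted just before the statement. Consequently its image is a $G$-submodule of $\widehat{V}$, and $\widetilde{\mathcal{T}}/(\mathcal{T}\cap\widetilde{\mathcal{T}})$ is $G$-isomorphic to that image by the first isomorphism theorem.

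The heart of the argument is then to show this image is all of $\widehat{V}$. Since $\widehat{V}$ is irreducible as a right $G$-module (as $G$ acts transitively on $\widehat{V}-\{0\}$), its only $G$-submodules are $0$ and $\widehat{V}$, so it suffices to exhibit a single element of $\widetilde{\mathcal{T}}$ with nonzero image. This is precisely the role of the element $\boldsymbol{\mu}=\mathbf{122}+\mathbf{212}-\mathbf{313}$ displayed before the statement: using $\widetilde{\mathbf{abc}}=\mathbf{bac}$ together with formula~(\ref{trbydual}), one checks that $\widetilde{\boldsymbol{\mu}}=\mathbf{212}+\mathbf{122}-\mathbf{133}$ has $\mathrm{tr}_{\widetilde{\boldsymbol{\mu}}}=0$ (the contributions of $\mathbf{122}$ and $-\mathbf{133}$ to $\widehat{v_1}$ cancel), so $\boldsymbol{\mu}\in\widetilde{\mathcal{T}}$, while $\mathrm{tr}_{\boldsymbol{\mu}}=\widehat{v_1}\ne0$. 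Hence the image is nonzero, therefore equal to $\widehat{V}$, giving $\widetilde{\mathcal{T}}/(\mathcal{T}\cap\widetilde{\mathcal{T}})\cong\widehat{V}$. For the parenthetical claim about $\mathcal{T}/(\mathcal{T}\cap\widetilde{\mathcal{T}})$ I would run the symmetric argument: restrict $\widetilde{\mathrm{tr}}$ to $\mathcal{T}$, note its kernel is again $\mathcal{T}\cap\widetilde{\mathcal{T}}$, and use the same vector — now $\widetilde{\boldsymbol{\mu}}\in\mathcal{T}$ with $\widetilde{\mathrm{tr}}_{\widetilde{\boldsymbol{\mu}}}=\mathrm{tr}_{\boldsymbol{\mu}}=\widehat{v_1}\ne0$ — to force surjectivity onto the irreducible $\widehat{V}$.

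Finally, the dimension statement follows by bookkeeping. Since $\mathrm{tr}$ and $\widetilde{\mathrm{tr}}$ are each surjective onto $\widehat{V}$, we have $\dim\mathcal{T}=\dim\widetilde{\mathcal{T}}=\dim\boldsymbol{\Lambda}-n=n^3-n$; subtracting the codimension-$n$ quotient just obtained yields $\dim(\mathcal{T}\cap\widetilde{\mathcal{T}})=(n^3-n)-n=n^3-2n=\dim\boldsymbol{\Lambda}-2n$. Comparing with the recorded values $\dim\mathcal{U}=(n^3-n^2)/2-n$ and $\dim\mathcal{N}=n^3/2+n^2/2-n$ confirms $\dim\mathcal{U}+\dim\mathcal{N}=n^3-2n$, matching. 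I do not expect any genuine obstacle here; the only nonmechanical point is the choice of the witness $\boldsymbol{\mu}$, which must simultaneously lie in $\widetilde{\mathcal{T}}$ and avoid the kernel of $\mathrm{tr}$ — that is, it must be unimodular with respect to the opposite product but not the given one — and the displayed combination is a minimal such vector.
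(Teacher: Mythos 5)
Your proposal is correct and follows essentially the same route as the paper: restrict $\mathrm{tr}$ to $\widetilde{\mathcal{T}}$, identify the kernel as $\mathcal{T}\cap\widetilde{\mathcal{T}}$, and use the very same witness $\boldsymbol{\mu}=\mathbf{122}+\mathbf{212}-\mathbf{313}$ together with the irreducibility of $\widehat{V}$ to get surjectivity; the dimension count then matches the recorded values of $\dim\mathcal{U}$ and $\dim\mathcal{N}$. Your explicit verification that $\mathrm{tr}_{\widetilde{\boldsymbol{\mu}}}=0$ while $\mathrm{tr}_{\boldsymbol{\mu}}=\widehat{v_1}$ is exactly the check the paper leaves to the reader.
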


It is easy to observe that $\mathcal{U}$ and $\mathcal{N}$ are both
contained in $\mathcal{T}\cap\widetilde{\mathcal{T}}$. If $\mathop{\rm char}%
\nolimits\mathbb{F}\ne2$, then $\mathcal{T}\cap\widetilde{\mathcal{T}}=%
\mathcal{U}\oplus\mathcal{N}$, since $\mathcal{U}\cap\mathcal{N}=0$ in this
case. If $\mathop{\rm char}\nolimits\mathbb{F}=2$, the map $\boldsymbol{%
\lambda}\mapsto\boldsymbol{\lambda}+\widetilde{\boldsymbol{\lambda}}$
defines a $G$-homomorphism from $\mathcal{T}\cap\widetilde{\mathcal{T}}$ to $%
\boldsymbol{\Lambda}$. Comparing with the discussion in Section~\ref%
{SecSubmodulesCK}, we see that the kernel of this map is $(\mathcal{T}\cap%
\widetilde{\mathcal{T}})\cap\mathcal{C }=\mathcal{N}$. Moreover, since $%
\boldsymbol{\lambda}+\widetilde{\boldsymbol{\lambda}}\in\mathcal{T}\cap%
\widetilde{\mathcal{T}}$ for all $\boldsymbol{\lambda}\in\mathcal{T}\cap%
\widetilde{\mathcal{T}}$ we get that the image of this map is contained in $(%
\mathcal{T}\cap\widetilde{\mathcal{T}})\cap\mathcal{K}=\mathcal{U}$.
Finally, comparing dimensions we conclude that this image in fact equals $%
\mathcal{U}$, so in characteristic 2 we again have that $(\mathcal{T}\cap%
\widetilde{\mathcal{T}})/\mathcal{N}$ and $\mathcal{U}$ are $G$-isomorphic.

\medskip

Consider now the filtration $0\subset \mathcal{N}\subset\mathcal{T}\cap%
\widetilde{\mathcal{T}}\subset {\mathcal{T}}\subset\boldsymbol{\Lambda}$
with no restriction on the field $\mathbb{F}$. We have shown that the last
two factors are $G$-isomorphic to $\widehat V$, whereas $(\mathcal{T}\cap%
\widetilde{\mathcal{T}})/\mathcal{N}$ is $G$-isomorphic to $\mathcal{U}$.
Note also that in characteristic 2 we also have $\mathcal{U}\subset \mathcal{%
N}$, since $\mathcal{K}\subset\mathcal{C}$.

\medskip

It is convenient at this point to introduce the elements $\boldsymbol{\eta}$
and $\boldsymbol{\delta}$ of $\boldsymbol{\Lambda}$ where $\boldsymbol{\eta}=%
\mathbf{123}-\mathbf{213}$ and $\boldsymbol{\delta}=\mathbf{112}$.
Thus $\boldsymbol{\eta}\in\mathcal{U}$ and $\boldsymbol{\delta}\in\mathcal{N}
$.
More can be shown:

\begin{remark}
\label{RemUeta} In~\cite[Lemma~4.14]{IvanovaPallikaros2019} it was shown
that $\mathcal{U}=\boldsymbol{\eta}(\mathbb{F} G)$ under the running
assumption that $\mathbb{F}$ is infinite, however the proof given there goes
through without any change in the case of an arbitrary field $\mathbb{F}$.
Hence, $\mathcal{U}=\boldsymbol{\eta}(\mathbb{F} G)$ for any field $\mathbb{F%
}$.
\end{remark}

\begin{proposition}
\label{PropNdelta} Suppose $|\mathbb{F}|>2$. Then \label{N}$\mathcal{N}=%
\boldsymbol{\delta }(\mathbb{F}G)$.
\end{proposition}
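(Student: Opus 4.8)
The inclusion $\boldsymbol{\delta}(\mathbb{F}G)\subseteq\mathcal{N}$ is immediate, since $\boldsymbol{\delta}=\mathbf{112}$ is one of the basis vectors of $\mathcal{N}$ listed in~(\ref{basisN}) and $\mathcal{N}$ is a $G$-submodule. For the reverse inclusion the plan is to show that every vector in the basis~(\ref{basisN}) lies in $\boldsymbol{\delta}(\mathbb{F}G)$. The key computational input is the image of $\boldsymbol{\delta}$ under $g\in G$: using~(\ref{basic}), or equivalently the tensor picture $\chi(\boldsymbol{\delta})=\widehat{v}_1\otimes\widehat{v}_1\otimes v_2$, one gets $\chi(\boldsymbol{\delta}g)=\varphi\otimes\varphi\otimes w$ with $\varphi=\widehat{v}_1g$ (the first row of $[g]$) and $w=g^{-1}v_2$, and here $\varphi(w)=\widehat{v}_1(v_2)=0$ automatically. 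A short linear-algebra argument shows conversely that every pair $(\varphi,w)$ of nonzero vectors with $\varphi(w)=0$ arises in this way, so the $G$-orbit of $\boldsymbol{\delta}$ is exactly $\{\varphi\otimes\varphi\otimes w:\varphi\ne0,\ w\ne0,\ \varphi(w)=0\}$; I would use this to justify the specific choices of $g$ below.

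The first two families are then easy. Taking $g$ to be a permutation matrix gives $\boldsymbol{\delta}g=\mathbf{aab}$ for a suitable pair $a\ne b$, and varying the permutation produces every $\mathbf{iij}$. For the family $\mathbf{ijk}+\mathbf{jik}$ with $i,j,k$ distinct, I would choose $g$ with $\widehat{v}_1g=\widehat{v}_i+\widehat{v}_j$ and $g^{-1}v_2=v_k$ (legitimate since $(\widehat{v}_i+\widehat{v}_j)(v_k)=0$); then $\boldsymbol{\delta}g=\mathbf{iik}+\mathbf{ijk}+\mathbf{jik}+\mathbf{jjk}$, and subtracting the already-produced vectors $\mathbf{iik},\mathbf{jjk}$ leaves $\mathbf{ijk}+\mathbf{jik}$. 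Neither of these steps imposes any restriction on $\mathbb{F}$.

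The remaining family $\mathbf{ijj}+\mathbf{jij}-\mathbf{iii}$ is the crux, and this is where $|\mathbb{F}|>2$ enters. Fix $i\ne j$ and write $P=\mathbf{ijj}+\mathbf{jij}-\mathbf{iii}$ and $Q=\mathbf{jii}+\mathbf{iji}-\mathbf{jjj}$ for the two type-$3$ vectors supported on $\{i,j\}$. I would apply $\boldsymbol{\delta}g=\varphi\otimes\varphi\otimes w$ with $\varphi=a\widehat{v}_i+b\widehat{v}_j$ and $w=cv_i+dv_j$, subject to $\varphi(w)=ac+bd=0$. Expanding the eight resulting terms and subtracting the type-$1$ vectors $a^2d\,\mathbf{iij}$ and $b^2c\,\mathbf{jji}$, the coefficients of $\mathbf{iii}$ and $\mathbf{jjj}$ become $a(ac+bd)$ and $b(ac+bd)$, which vanish, so what remains is $abd\,P+abc\,Q$. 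Specializing to $a=s$, $b=1$, $c\ne0$, $d=-sc$ (with $s\ne0$) yields $sc(-sP+Q)\in\boldsymbol{\delta}(\mathbb{F}G)$, hence $-sP+Q\in\boldsymbol{\delta}(\mathbb{F}G)$ for every nonzero $s$.

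Finally, since $|\mathbb{F}|>2$ there exist two distinct nonzero values $s_1\ne s_2$; subtracting the corresponding elements gives $(s_2-s_1)P\in\boldsymbol{\delta}(\mathbb{F}G)$ with $s_2-s_1\ne0$, so $P\in\boldsymbol{\delta}(\mathbb{F}G)$, and conjugating by permutations produces every type-$3$ vector. Thus all three families in~(\ref{basisN}) lie in $\boldsymbol{\delta}(\mathbb{F}G)$, giving $\mathcal{N}\subseteq\boldsymbol{\delta}(\mathbb{F}G)$ and hence equality. The genuinely delicate point is this last family: a single vector $\varphi\otimes\varphi\otimes w$ with $\varphi,w$ in the plane spanned by $\widehat{v}_i,\widehat{v}_j$ can only yield a combination $\alpha P+\beta Q$ whose ratio is pinned down by the constraint $\varphi(w)=0$, and separating $P$ from $Q$ requires at least two distinct nonzero ratios, which is exactly what is unavailable when $\mathbb{F}=\mathbb{F}_2$.
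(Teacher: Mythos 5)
Your proof is correct, and its skeleton is the same as the paper's: both arguments produce the three families of basis vectors of $\mathcal{N}$ from~(\ref{basisN}) in the same order and by the same moves --- the vectors $\mathbf{iij}$ by permutations, the vectors $\mathbf{ijk}+\mathbf{jik}$ by subtracting already-obtained vectors from a suitable $\boldsymbol{\delta}g$, and the vectors $\mathbf{ijj}+\mathbf{jij}-\mathbf{iii}$ by generating a one-parameter family of combinations of the two such vectors supported on $\{i,j\}$ and then subtracting two members of that family corresponding to distinct nonzero scalars (this is exactly where both proofs consume the hypothesis $|\mathbb{F}|>2$; the paper's diagonal matrix $\mathrm{diag}(\alpha,1,\ldots,1)$ with $\alpha\neq 0,1$ plays the role of your parameter $s$). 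The one genuine difference is your organizing lemma: you identify the $G$-orbit of $\boldsymbol{\delta}$, via the tensor map $\chi$, with the set of decomposables $\varphi\otimes\varphi\otimes w$ where $\varphi\neq 0$, $w\neq 0$ and $\varphi(w)=0$, and then choose pairs $(\varphi,w)$ rather than matrices. The paper instead writes down three explicit matrices $[g]$ and evaluates $\mathbf{112}g$ from~(\ref{TripleChange}) each time; the two computations coincide (e.g.\ the paper's second matrix is your pair $\varphi=\widehat{v}_1+\widehat{v}_2$, $w=v_3$). What your version buys is a uniform justification of all the choices at once and a conceptual account of why $|\mathbb{F}|>2$ is forced: the constraint $\varphi(w)=0$ pins the ratio of the $P$- and $Q$-coefficients, so separating them requires two distinct nonzero ratios. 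If you write it up, do include the short argument for the surjectivity claim (given $\varphi,w$ with $\varphi(w)=0$, extend $w$ to a basis of $\ker\varphi$ and adjoin $u$ with $\varphi(u)=1$ to build $g$); it is the only step you currently assert rather than prove.
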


\begin{proof}
We use the formula~\eqref{basic} with various choices of $g\in G$ to
produce other members of $\boldsymbol{\delta }(\mathbb{F}G)$. By basis
permutations, we get that $\boldsymbol{\delta }(\mathbb{F}G)$ contains all
structure vectors $\mathbf{iij}$, with $j\neq i$.
An immediate consequence of formula~\eqref{basic} is that
\begin{equation}
\mathbf{abc}g=\sum_{i,j,k}g_{ai}g_{bj}g_{kc}^{(-1)}\mathbf{ijk}.
\label{TripleChange}
\end{equation}
In our case,~\eqref{TripleChange} reads%
\begin{eqnarray*}
\mathbf{112}g &=&\sum_{i,j,k}g_{1i}g_{1j}g_{k2}^{(-1)}\mathbf{ijk} \\
&=&\sum_{i,k}g_{1i}^{2}g_{k2}^{(-1)}\mathbf{iik}%
+\sum_{i<j,k}g_{1i}g_{1j}g_{k2}^{(-1)}(\mathbf{ijk}+\mathbf{jik}).
\end{eqnarray*}%
First take $g\in G$ with
\begin{equation*}
[g]=%
\begin{bmatrix}
1 & 1 & 0 & 0 \\
0 & 0 & 1 & 0 \\
0 & 1 & 0 & 0 \\
0 & 0 & 0 & I_{n-3}%
\end{bmatrix}%
,\quad [g^{-1}]=%
\begin{bmatrix}
1 & 0 & -1 & 0 \\
0 & 0 & 1 & 0 \\
0 & 1 & 0 & 0 \\
0 & 0 & 0 & I_{n-3}%
\end{bmatrix}%
.
\end{equation*}%
Then $\mathbf{112}g=\mathbf{113}+\mathbf{223}+(\mathbf{123}+\mathbf{213})$.
So $\mathbf{123}+\mathbf{213}\in \boldsymbol{\delta }(\mathbb{F}G)$, and
then by permutations, all $\mathbf{ijk}+\mathbf{jik}$ (for distinct $i,j,k$) belong to $\boldsymbol{\delta }(%
\mathbb{F}G)$.
Now take $g\in G$ with
\begin{equation*}
[g]=%
\begin{bmatrix}
1 & 1 & 0 \\
0 & 1 & 0 \\
0 & 0 & I_{n-2}%
\end{bmatrix},%
\quad [g^{-1}]=%
\begin{bmatrix}
1 & -1 & 0 \\
0 & 1 & 0 \\
0 & 0 & I_{n-2}%
\end{bmatrix}%
.
\end{equation*}%
Then%
\begin{eqnarray*}
\mathbf{112}g &=&-\mathbf{111}+\mathbf{112}-\mathbf{221}+\mathbf{222}-(%
\mathbf{121}+\mathbf{211})+(\mathbf{122+212}) \\
&=&-(\mathbf{121}+\mathbf{211}-\mathbf{222})+(\mathbf{122+212}-\mathbf{111})+%
\mathbf{112}-\mathbf{221}.
\end{eqnarray*}%
Hence $-(\mathbf{121}+\mathbf{211}-\mathbf{222})+(\mathbf{122}+\mathbf{212}-\mathbf{111})\in\boldsymbol\delta(\F G)$.

Finally take $g\in{\rm GL}(V)$ with
$[g]=\begin{bmatrix}
\alpha & 0 \\
0 &  I_{n-1}%
\end{bmatrix}$,
where $\alpha\in\F-\{0,1\}$.
Then $(-(\mathbf{121}+\mathbf{211}-\mathbf{222})+(\mathbf{122}+\mathbf{212}-\mathbf{111}))g= \alpha(\mathbf{122}+\mathbf{212}-\mathbf{111})-(\mathbf{121}+\mathbf{211}-\mathbf{222})\in\boldsymbol\delta(\F G)$.
Subtracting, shows that $(1-\alpha)(\mathbf{122}+\mathbf{212}-\mathbf{111})\in\boldsymbol\delta(\F G)$.
Hence, $\mathbf{122}+\mathbf{212}-\mathbf{111}\in\boldsymbol\delta(\F G)$, since $\alpha\ne1$.
We conclude that all $\mathbf{iji}+\mathbf{jii}-\mathbf{jjj}$ (for distinct $i,j$) belong to $\boldsymbol\delta(\F G)$.
Thus from Table~\ref{basisN}, the basis elements of $\mathcal N$ are all present and $\boldsymbol\delta(\F G)=\mathcal N$.
\end{proof}

The submodules $\mathcal{U}$ and $\mathcal{N}$, and their generators $%
\boldsymbol{\eta}$ and $\boldsymbol{\delta}$, will play an important part in
understanding the $GL(V)$-structure of $\boldsymbol{\Lambda}$ and the
composition series of some of its important $G$-submodules as we will see in
subsequent sections.
First, we will need to determine the intersection of $\mathcal{U}$ and $%
\mathcal{N}$ with two special $G$-submodules of $\boldsymbol{\Lambda}$,
namely $\mathcal{M}^{\ast }$ and $\mathcal{M}^{\ast \ast}$, the structure of
which we discuss in the next two sections.

\section{The structure of $\mathcal{M}^{\ast }$}

\label{SecM*}

\subsection{Defining conditions}

\label{SubsecDefConds}

We define $\mathcal{M}^{\ast }$ to be the set of structure vectors $%
\boldsymbol{\lambda}$ whose corresponding algebras $\Theta^{-1}(\boldsymbol{%
\lambda })$ satisfy the condition $[u,v]\in \mathbb{F}$-$\mathrm{sp}(u,v)$,
the $\mathbb{F}$-span of $u$ and $v$. Clearly $\mathcal{M}^{\ast }$ is a $G$%
-submodule of $\boldsymbol{\Lambda}$. We first wish to bound the dimension
of $\mathcal{M}^{\ast }$. Recall that $v_{1},\ldots ,v_{n}$ is the standard
basis for $V$.

\medskip

Convention~($\ddag$) will be in force for the whole of the Subsection~\ref%
{SubsecDefConds}.

\begin{lemma}
We have $\dim \mathcal{M}^{\ast }\leq 2n$.
\end{lemma}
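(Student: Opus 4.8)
The plan is to use the fact, already noted, that $\mathcal{M}^{\ast}$ is a linear subspace of $\boldsymbol{\Lambda}$, and to bound its dimension by extracting linear relations among the components $\lambda_{ijk}$ of a structure vector $\boldsymbol{\lambda}\in\mathcal{M}^{\ast}$. First I would feed basis vectors into the defining condition. Taking $u=v=v_i$ forces $[v_i,v_i]$ into the $\F$-span of $v_i$, so $\lambda_{iik}=0$ for $k\ne i$; taking $u=v_i$ and $v=v_j$ with $i\ne j$ forces $[v_i,v_j]$ into the $\F$-span of $v_i$ and $v_j$, so $\lambda_{ijk}=0$ whenever $k\notin\{i,j\}$. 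Hence the only components of $\boldsymbol{\lambda}$ that can be nonzero are $\lambda_{iii}$ together with $\lambda_{iji}$ and $\lambda_{ijj}$ for $i\ne j$.

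Next I would test the condition on sums of basis vectors to collapse these surviving components. Evaluating on $u=v_i$ and $v=v_j+v_k$ with $i,j,k$ distinct (here $n\ge3$ is used), the product $[u,v]$ must lie in the $\F$-span of $v_i$ and $v_j+v_k$, which forces its $v_j$- and $v_k$-components to coincide, giving $\lambda_{ijj}=\lambda_{ikk}$; this common value depends only on $i$, and I set $a_i:=\lambda_{ijj}$. Symmetrically, $u=v_i+v_j$ and $v=v_k$ yield $\lambda_{iki}=\lambda_{jkj}$, a value depending only on $k$, which I call $b_k:=\lambda_{iki}$. Thus $[v_i,v_j]=b_j v_i+a_i v_j$ for all $i\ne j$, so the off-diagonal part of $\boldsymbol{\lambda}$ is governed by the $2n$ scalars $a_1,\dots,a_n,b_1,\dots,b_n$.

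The decisive step is to pin the diagonal entries $\lambda_{iii}$ to these same scalars. For this I would evaluate the condition on the pair $u=v_i+v_j$, $v=v_i+v_k$ with $i,j,k$ distinct. Expanding $[u,v]$ by bilinearity and substituting the formulas just found, its $v_j$- and $v_k$-components determine the two coordinates of $[u,v]$ in the basis $u,v$ of their span; matching these against the computed $v_i$-component then yields exactly $\lambda_{iii}=a_i+b_i$. Every surviving component of $\boldsymbol{\lambda}$ is now a fixed linear function of $(a_1,\dots,a_n,b_1,\dots,b_n)$, so $\boldsymbol{\lambda}\mapsto(a_i,b_i)_i$ is an injective linear map $\mathcal{M}^{\ast}\to\F^{2n}$ and $\dim\mathcal{M}^{\ast}\le2n$.

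I expect this last step to be the only genuine obstacle, the art being the choice of test vectors that isolate the diagonal--off-diagonal relation. The three-index pair $u=v_i+v_j$, $v=v_i+v_k$ achieves this over an arbitrary field, which is why the argument relies on $n\ge3$ rather than on any condition on $|\F|$; a two-index probe $v_i+\xi v_j$ would instead recover $\lambda_{iii}=a_i+b_i$ only after cancelling a factor $\xi-1$, needing a scalar $\xi\ne0,1$ and hence $|\F|>2$. Only the containment is required for the bound, but it is worth noting that equality holds: the members of $\mathcal{M}^{\ast}$ are exactly the algebras with product $[u,v]=\varphi(u)v+\psi(v)u$ for $\varphi,\psi\in\widehat V$, corresponding to $a_i=\varphi(v_i)$ and $b_i=\psi(v_i)$.
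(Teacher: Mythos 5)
Your proof is correct and follows essentially the same route as the paper's: the same probes $[v_i,v_i]$, $[v_i,v_j]$, $[v_i,v_j+v_k]$ and $[v_i+v_j,v_i+v_k]$ yield the same relations (your $a_i,b_i$ are the paper's $\delta_i,\alpha_i$), culminating in $\lambda_{iii}=a_i+b_i$ and the bound $\dim\mathcal{M}^{\ast}\le 2n$. Your closing remark on equality likewise matches the paper's subsequent identification of $\mathcal{M}^{\ast}$ with the structure vectors $\boldsymbol{\mu}_{\alpha,\delta}$.
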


\begin{proof}
Since $[v_{i},v_{i}]\in \mathbb{F}v_{i}$, it must be
that $\lambda _{iij}=0$ (for all $j\neq i$). Similarly, $[v_{i},v_{j}]\in \mathbb{F}$-$\mathrm{sp}(v_{i},v_{j})$
implies that $\lambda _{ijk}=0$. So far we have $%
n(n-1)+n(n-1)(n-2)=n^{3}-2n^{2}+n$ independent conditions on the structure
constants. Next,%
\begin{eqnarray*}
\lbrack v_{i},v_{j}+v_{k}] &=&[v_{i},v_{j}]+[v_{i},v_{k}] \\
&=&\lambda _{iji}v_{i}+\lambda _{ijj}v_{j}+\lambda
_{iki}v_{i}+\lambda _{ikk}v_{k}.
\end{eqnarray*}%
As the result must be $\xi v_{i}+\eta (v_{j}+v_{k})$
for some $\xi ,\eta $, we need $\lambda _{ijj}=\lambda _{ikk}$ for all
choices. Similarly, $\lambda _{jij}=\lambda _{kik}$.
Thus we may write $%
\delta _{i}=\lambda _{ijj}$ and $\alpha _{i}=\lambda _{jij}$. The
computation creates $2\times n(n-2)=2n^{2}-4n$ more conditions, making $\dim
\mathcal{M}^{\ast }\leq 3n$. Finally, we have%
\begin{eqnarray*}
\lbrack v_{i}+v_{j},v_{i}+v_{k}]
&=&[v_{i},v_{i}]+[v_{i},v_{k}]+[v_{j},v_{i}]+[v_{j},v_{k}] \\
&=&\lambda _{iii}v_{i}+\lambda _{iki}v_{i}+\lambda
_{ikk}v_{k}+\lambda _{jii}v_{i} \\
&&+\lambda _{jij}v_{j}+\lambda _{jkj}v_{j}+\lambda
_{jkk}v_{k},
\end{eqnarray*}%
and this must be $\xi (v_{i}+v_{j})+\eta (v_{i}+v_{k})$ for some $\xi ,\eta $. So%
\begin{eqnarray*}
\xi +\eta &=&\lambda _{iii}+\lambda _{iki}+\lambda _{jii} \\
\xi &=&\lambda _{jij}+\lambda _{jkj} \\
\eta &=&\lambda _{ikk}+\lambda _{jkk}.
\end{eqnarray*}%
Then%
\[
\lambda _{iii}+\lambda _{iki}+\lambda _{jii}=\lambda _{jij}+\lambda
_{jkj}+\lambda _{ikk}+\lambda _{jkk},
\]%
making%
\begin{eqnarray*}
\lambda _{iii} &=&\lambda _{jij}+\lambda _{jkj}+\lambda _{ikk}+\lambda
_{jkk}-\lambda _{iki}-\lambda _{jii} \\
&=&\alpha _{i}+\alpha_{k}+\delta_{i}+\delta_{j}-\alpha_{k}-\delta_{j} \\
&=&\alpha _{i}+\delta _{i}.
\end{eqnarray*}%
This gives a further $n$ conditions and the desired result: $\dim \mathcal{M}%
^{\ast }\leq 2n$. Here are the relations for $\mathcal{M}^{\ast }$ again:%
\begin{eqnarray}
\lambda _{iij} &=&0,\quad \lambda _{ijk}=0 \notag\\
\lambda _{ijj} &=&\lambda _{ikk},\quad \lambda _{jij}=\lambda _{kik} \label{cond*} \\
\lambda _{iii} &=&\lambda _{ijj}+\lambda _{jij}. \notag
\end{eqnarray}
\end{proof}

Now let $\alpha $ and $\delta $ be two linear functionals on $V$ and define
the the algebra $\mathfrak{m}_{\alpha ,\delta }$ with structure vector $%
\boldsymbol{\mu }_{\alpha ,\delta }=\Theta (\mathfrak{m}_{\alpha ,\delta })$
by the multiplication rule $[u,v]=\alpha (v)u+\delta (u)v$. Evidently $%
\boldsymbol{\mu }_{\alpha ,\delta }\in \mathcal{M}^{\ast }$. Since the set
of such algebras is a $2n$-dimensional space, they must make up $\Theta
^{-1}(\mathcal{M}^{\ast })$:

\begin{proposition}
The dimension of $\mathcal{M}^{\ast }$ is $2n$, and its members are the
structure vectors $\boldsymbol{\mu }_{\alpha ,\delta }$.
\end{proposition}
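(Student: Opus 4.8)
The plan is to exhibit a $2n$-dimensional subspace of $\mathcal{M}^{\ast}$ consisting precisely of the vectors $\boldsymbol{\mu}_{\alpha,\delta}$, and then to invoke the bound $\dim\mathcal{M}^{\ast}\le 2n$ from the preceding lemma to force equality. Concretely, I would consider the assignment $\Phi\colon(\alpha,\delta)\mapsto\boldsymbol{\mu}_{\alpha,\delta}$ from $\widehat{V}\oplus\widehat{V}$ to $\boldsymbol{\Lambda}$ and show that it is a linear injection whose image lies in $\mathcal{M}^{\ast}$. The containment $\boldsymbol{\mu}_{\alpha,\delta}\in\mathcal{M}^{\ast}$ has already been observed, and $\dim(\widehat{V}\oplus\widehat{V})=2n$, so once $\Phi$ is shown to be injective its image is a $2n$-dimensional subspace of $\mathcal{M}^{\ast}$.

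Linearity of $\Phi$ is immediate: the defining rule $[u,v]=\alpha(v)u+\delta(u)v$ is additive and homogeneous in the pair $(\alpha,\delta)$, so that $\mathfrak{m}_{\alpha+\alpha',\,\delta+\delta'}=\mathfrak{m}_{\alpha,\delta}+\mathfrak{m}_{\alpha',\delta'}$ and $\mathfrak{m}_{c\alpha,\,c\delta}=c\,\mathfrak{m}_{\alpha,\delta}$ in the vector space $\mathbf{A}$; applying the (in particular linear) $G$-isomorphism $\Theta$ transfers this to $\boldsymbol{\Lambda}$.

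For injectivity, I would suppose $\boldsymbol{\mu}_{\alpha,\delta}=\boldsymbol{\mu}_{\alpha',\delta'}$, which is equivalent to $(\alpha(v)-\alpha'(v))u+(\delta(u)-\delta'(u))v=0$ for all $u,v\in V$. Given any nonzero $u$, I would choose $v\notin\mathbb{F}u$, which is possible since $n\ge 2$; then $u$ and $v$ are linearly independent, and the vanishing of the above combination forces $\delta(u)=\delta'(u)$ (and simultaneously $\alpha(v)=\alpha'(v)$). Letting $u$ range over $V\setminus\{0\}$ gives $\delta=\delta'$, and the symmetric choice (fix a nonzero $v$, pick $u\notin\mathbb{F}v$) gives $\alpha=\alpha'$. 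Hence $\Phi$ is injective.

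Combining these steps, $\im\Phi$ is a $2n$-dimensional subspace of $\mathcal{M}^{\ast}$, so the lemma's bound $\dim\mathcal{M}^{\ast}\le 2n$ yields $\dim\mathcal{M}^{\ast}=2n$ and $\mathcal{M}^{\ast}=\im\Phi$; that is, every member of $\mathcal{M}^{\ast}$ is some $\boldsymbol{\mu}_{\alpha,\delta}$. I do not expect any real obstacle here: the only point requiring care is the injectivity, and specifically the use of $n\ge 2$ to produce a vector linearly independent from any prescribed one—without it the separation of the $\alpha$- and $\delta$-contributions could fail.
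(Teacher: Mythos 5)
Your proposal is correct and follows essentially the same route as the paper: exhibit the $\boldsymbol{\mu}_{\alpha,\delta}$ as a $2n$-dimensional subspace of $\mathcal{M}^{\ast}$ and conclude by the bound $\dim\mathcal{M}^{\ast}\le 2n$ from the preceding lemma. The only difference is that the paper simply asserts that the set of algebras $\mathfrak{m}_{\alpha,\delta}$ is $2n$-dimensional, whereas you supply the (correct) injectivity argument for $(\alpha,\delta)\mapsto\boldsymbol{\mu}_{\alpha,\delta}$, which is a detail worth having and which indeed uses $n\ge 2$ exactly as you note.
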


Alternatively, it is easy to check directly that the conditions~\eqref{cond*}
are also sufficient for the structure vector $\boldsymbol{\lambda}$ to be a
member of $\mathcal{M}^{\ast}$ (and hence they constitute a set of defining
conditions for $\mathcal{M}^{\ast}$). For this, let $u=\sum_i\xi_iv_i$ and $%
v=\sum_i\xi_i^{\prime}v_i$ and assume conditions~\eqref{cond*} hold. On
setting $\alpha_i=\lambda_{jij}$ and $\delta_i=\lambda_{ijj}$ as above, we
get that in $\Theta^{-1}(\boldsymbol{\lambda})$ the coefficient of $v_k$ in
the expression of $[u,v]$ as a linear combination of our standard basis $%
v_1,\ldots, v_n$, equals $\xi_k^{\prime}(\sum_i\xi_i\delta_i)+\xi_k(\sum_i%
\xi_i^{\prime}\alpha_i)$. Thus $[u,v]=(\sum_i\xi_i^{\prime}\alpha_i)u+(%
\sum_i\xi_i\delta_i)v$. In particular, we have $\alpha(v)=\sum_i\xi_i^{%
\prime}\alpha_i$ and $\delta(u)=\sum_i\xi_i\delta_i$.

\subsection{Action of $G$ on $\mathcal{M}^{\ast }$ and structure vectors}

If $\mathfrak{g}$ is an algebra and $g\in G$, recall that then the image $%
\mathfrak{g}g=\mathfrak{g}^{\prime }$ has product given by $[u,v]^{\prime
}=g^{-1}[gu,gv]$.
For $\mathfrak{m}_{\alpha ,\delta }g$ we have the product%
\begin{equation*}
\lbrack u,v]^{\prime }=g^{-1}[gu,gv]=g^{-1}(\alpha (gv)gu+\delta
(gu)gv)=\alpha (gv)u+\delta (gu)v.
\end{equation*}%
Thus $\mathfrak{m}_{\alpha ,\delta }g=\mathfrak{m}_{\alpha g,\delta g}$, so
that $\boldsymbol{\mu }_{\alpha ,\delta }g=\boldsymbol{\mu }_{\alpha
g,\delta g}$. In particular, $\mathcal{M}^{\ast }$ is isomorphic to $%
\widehat{V}\oplus \widehat{V}$ as a $G$-module, one isomorphism being $%
\boldsymbol{\mu }_{\alpha ,\delta }\mapsto (\alpha ,\delta )$. The
transitivity properties of $G$ on $\widehat{V}$, which parallel those on $V$%
, show that $\widehat{V}$ is irreducible, and then $\mathcal{M}^{\ast }$ is
completely reducible. Moreover, if $\alpha $ and $\delta $ are independent,
then $(\alpha ,\delta )\mathbb{F}G=\widehat{V}\oplus \widehat{V}$. If $%
\alpha $ and $\delta $ are not independent and not both 0, then $(\alpha
,\delta )\mathbb{F}G$ is an irreducible submodule. Suppose that for some
nonzero member $P=(P_{\alpha },P_{\delta })$ of $\mathbb{F}^{2}$, $P_{\delta
}\alpha -P_{\alpha }\delta =0$. Then%
\begin{equation*}
(\alpha ,\delta )\mathbb{F}G=\left\{ (P_{\alpha }\theta ,P_{\delta }\theta
)|\theta \in \widehat{V}\right\} .
\end{equation*}%
We denote the corresponding submodule of $\mathcal{M}^{\ast }$ by $\mathcal{M%
}_{P}^{\ast }$. It follows that $\mathcal{M}_{P}^{\ast }$ is an irreducible
submodule which is $G$-isomorphic to $\widehat{V}$. The irreducible
submodules of $\mathcal{M}^{\ast }$ are the $\mathcal{M}_{P}^{\ast }$, $P$
running over a set of representatives of the one-dimensional subspaces of $%
\mathbb{F}^{2}$ (the projective line over $\mathbb{F}$).

For $\mathfrak{m}_{\alpha ,\delta }$ we have $[u,u]=(\alpha (u)+\delta (u))u$%
. %, so that $\omega =\alpha +\delta $.
The subspace $\mathcal{K}$ %( $=\mathcal{K}_{n}$ )
consists of the $\boldsymbol{\lambda }$ for which $\alpha +\delta =0$. Thus $%
\mathcal{K}\cap \mathcal{M}^{\ast }=\mathcal{M}_{(1,-1)}^{\ast }$. As to the
adjoint trace form $\mathrm{tr}(\boldsymbol{\mu }_{\alpha ,\delta },v)=%
\mathrm{tr}(u\mapsto \lbrack v,u])$, we have that the trace of the map $%
u\mapsto \lbrack v,u]$ is the sum of the traces of the two maps $u\mapsto
\alpha (u)v$ and $u\mapsto \delta (v)u$. These are respectively $\alpha (v)$
and $n\delta (v)$. So $\mathrm{tr}(\boldsymbol{\mu }_{\alpha ,\delta
},v)=\alpha (v)+n\delta (v)$. It follows that $\mathcal{T}\cap \mathcal{M}%
^{\ast }=\mathcal{M}_{(-n,1)}^{\ast }$. Similarly, for the opposite adjoint
trace form $\widetilde{\mathrm{tr}}(\boldsymbol{\mu }_{\alpha ,\delta },v)=%
\mathrm{tr}(u\mapsto \lbrack u,v])$, we get $\widetilde{\mathrm{tr}}(%
\boldsymbol{\mu }_{\alpha ,\delta },v)=n\alpha (v)+\delta (v)$, and $%
\widetilde{\mathcal{T}}\cap \mathcal{M}^{\ast }=\mathcal{M}_{(1,-n)}^{\ast }$%
. In particular, $\boldsymbol{\mu }_{\alpha ,\delta }\in \mathcal{U}\cap
\mathcal{M}^{\ast }$ only when both $\delta =-\alpha $ and $-n\delta =\alpha
$. That is, we need $(n-1)\delta =0$. So if $\mathrm{char}\,\mathbb{F}$ does
not divide $n-1$, then $\mathcal{U}\cap \mathcal{M}^{\ast }=0$. But if it
does, then $\mathcal{U}\cap \mathcal{M}^{\ast }=\mathcal{M}_{(1,-1)}^{\ast }$%
. Here is a summary of these intersections:

\begin{proposition}
\label{PropIntersecM*} We have the following intersections with $\mathcal{M}%
^{\ast }$:%
\begin{equation*}
\begin{tabular}{ll}
$\mathcal{C}\cap \mathcal{M}^{\ast }$ & $\mathcal{M}_{(1,1)}^{\ast }$ \\
$\mathcal{K}\cap \mathcal{M}^{\ast }$ & $\mathcal{M}_{(1,-1)}^{\ast }$ \\
$\mathcal{T}\cap \mathcal{M}^{\ast }$ & $\mathcal{M}_{(-n,1)}^{\ast }$ \\
$\widetilde{\mathcal{T}}\cap \mathcal{M}^{\ast }$ & $\mathcal{M}%
_{(1,-n)}^{\ast }$ \\
$\mathcal{U}\cap \mathcal{M}^{\ast }$ & $\left\{
\begin{array}{c}
0,\quad \mathrm{char}\mathbb{F}\nmid n-1 \\
\mathcal{M}_{(1,-1)}^{\ast },\quad \mathrm{char}\mathbb{F}\mid n-1%
\end{array}%
\right. $ \\
$\mathcal{N\cap M}^{\ast }$ & $\left\{
\begin{array}{c}
0,\quad \mathrm{char}\mathbb{F}\nmid n+1 \\
\mathcal{M}_{(1,1)}^{\ast },\quad \mathrm{char}\mathbb{F}\mid n+1%
\end{array}%
\right. $%
\end{tabular}%
\end{equation*}
\end{proposition}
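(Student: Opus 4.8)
The plan is to read off membership of a general structure vector $\boldsymbol{\mu}_{\alpha,\delta}$ (with product $[u,v]=\alpha(v)u+\delta(u)v$) in each of the six submodules as a \emph{linear} condition on the pair $(\alpha,\delta)\in\widehat V\oplus\widehat V$, and then to match that condition against the description of the irreducible submodules $\mathcal{M}_P^{\ast}$ given just before the statement. Four of the six rows, namely those for $\mathcal{K}$, $\mathcal{T}$, $\widetilde{\mathcal{T}}$, and $\mathcal{U}$, are exactly the computations carried out in the discussion preceding the proposition, so I would simply invoke those. This leaves only the rows for $\mathcal{C}$ and $\mathcal{N}$ to establish.

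For $\mathcal{C}$ I would argue directly from the product formula: $\mathfrak{m}_{\alpha,\delta}$ is commutative precisely when $\alpha(v)u+\delta(u)v=\alpha(u)v+\delta(v)u$ for all $u,v\in V$, that is, $(\alpha(v)-\delta(v))u=(\alpha(u)-\delta(u))v$. Choosing $u,v$ linearly independent (possible since $n\ge 3$) forces both coefficients to vanish, and as every vector lies in such a pair we obtain $\alpha=\delta$. Hence $\boldsymbol{\mu}_{\alpha,\delta}\in\mathcal{C}$ exactly when $(\alpha,\delta)$ lies on the line $P=(1,1)$, giving $\mathcal{C}\cap\mathcal{M}^{\ast}=\mathcal{M}_{(1,1)}^{\ast}$. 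For $\mathcal{N}$ I would use the identity $\mathcal{N}=\mathcal{C}\cap\mathcal{T}$ from Section~\ref{SectAdjTrace}, so that $\boldsymbol{\mu}_{\alpha,\delta}\in\mathcal{N}\cap\mathcal{M}^{\ast}$ requires both $\alpha=\delta$ (from $\mathcal{C}$) and $\alpha+n\delta=0$ (from $\mathcal{T}$), hence $(n+1)\delta=0$. If $\mathrm{char}\,\mathbb{F}\nmid n+1$ this forces $\delta=\alpha=0$ and the intersection is $0$; if $\mathrm{char}\,\mathbb{F}\mid n+1$ then $\alpha=\delta$ is unconstrained and the intersection is $\mathcal{M}_{(1,1)}^{\ast}$, matching the final row.

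Since each characterizing condition is a single linear relation of the form $P_\delta\alpha-P_\alpha\delta=0$ (or the intersection of two such), every intersection is automatically a submodule of the stated shape, and there is no genuine obstacle to the argument. The only point needing a little care is the bookkeeping for the two-fold intersections $\mathcal{U}=\mathcal{K}\cap\mathcal{T}$ and $\mathcal{N}=\mathcal{C}\cap\mathcal{T}$: one must check when the two defining lines coincide, which happens exactly in the divisibility cases $\mathrm{char}\,\mathbb{F}\mid n-1$ and $\mathrm{char}\,\mathbb{F}\mid n+1$ respectively, so that the intersection survives as a copy of $\widehat V$ rather than collapsing to $0$.
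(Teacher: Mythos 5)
Your proposal is correct and follows essentially the same route as the paper: the paper's justification for this proposition is precisely the computation, in the paragraph preceding the statement, of the linear conditions on $(\alpha,\delta)$ characterizing membership of $\boldsymbol{\mu}_{\alpha,\delta}$ in $\mathcal{K}$, $\mathcal{T}$, $\widetilde{\mathcal{T}}$ and $\mathcal{U}$, with the $\mathcal{C}$ and $\mathcal{N}$ rows left implicit. Your direct derivation of $\alpha=\delta$ for the $\mathcal{C}$ row and the reduction of the $\mathcal{N}$ row to $(n+1)\delta=0$ via $\mathcal{N}=\mathcal{C}\cap\mathcal{T}$ correctly supply those omitted details in the same spirit.
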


For structure vectors, let $\alpha =\zeta \widehat{v_{a}}$ and $\delta =\eta
\widehat{v_{d}}$. Then in $\Theta ^{-1}(\boldsymbol{\mu }_{\alpha ,\delta })$%
,%
\begin{eqnarray*}
\lbrack v_{i},v_{j}] &=&\zeta \widehat{v_{a}}(v_{j})v_{i}+\eta \widehat{v_{d}%
}(v_{i})v_{j} \\
&=&\zeta \delta _{aj}v_{i}+\eta \delta _{di}v_{j}.
\end{eqnarray*}%
So for $\boldsymbol{\mu }_{\alpha ,\delta }$, $\lambda _{iji}=\zeta \delta
_{aj}$ and $\lambda _{ijj}=\eta \delta _{di}$ when $i\ne j$. The only other
non-zero components are $\lambda_{aaa}=\zeta$ and $\lambda_{ddd}=\eta$ if $%
a\ne d$, and $\lambda_{aaa}\,(=\lambda_{ddd})=\zeta+\eta$ if $a=d$. It
follows that $\boldsymbol{\mu }_{\alpha ,\delta }=\zeta \sum \mathbf{iai}%
+\eta \sum \mathbf{djj}$, the sums unrestricted ($i=a$ and $j=d$ also
allowed). In particular, the sums $\sum \mathbf{iai}$ and $\sum \mathbf{djj}$
form a basis for $\mathcal{M}^{\ast }$.

\begin{remark}
From the description of the elements of $\mathcal{M}^{\ast}$ we
have obtained in this section, we can easily deduce that the Zariski-closure of the $G$-orbit of any nonzero element of $\mathcal M^{\ast}$ necessarily contains one of the $\mathcal M^{\ast}_P$'s whenever $\F$ is algebraically closed (compare~%
\cite[Lemma~5.5]{IvanovaPallikaros2019}).
For this, let $T$ be the subgroup of $G$ consisting of precisely
those $g\in G$ such that $[g]$ is diagonal. In view of~\cite[Lemma~3.2.3 and
Theorem~3.4.2]{Geck2003} it is enough to show that whenever $\boldsymbol{%
\lambda}\in\mathcal{M}^{\ast}$ satisfies $\boldsymbol{\lambda }t=\beta(t)%
\boldsymbol{\lambda}$, with $\beta(t)\in\mathbb{F}$, for all $t\in T$, then $%
\boldsymbol{\lambda}$ necessarily belongs to $\cup\mathcal{M}^{\ast}_P$. Set
$\boldsymbol\varepsilon_a=\sum\mathbf{iai}$ and $\widetilde{\boldsymbol\varepsilon}_a=\sum\mathbf{aii%
}$ (the sums unrestricted as above) for $1\le a\le n$, and let $\boldsymbol{%
\mu}\in\mathcal{M}^{\ast}$. Then $\boldsymbol{\mu}=\sum_a(\xi_a%
\boldsymbol\varepsilon_a+\xi_a^{\prime}\widetilde{\boldsymbol\varepsilon}_a)$ for some $\xi_a$, $%
\xi_a^{\prime}\in\mathbb{F}$ and $\boldsymbol{\mu }t=\sum_at_{aa}(\xi_a%
\boldsymbol\varepsilon_a+\xi_a^{\prime}\widetilde{\boldsymbol\varepsilon}_a)$. For this last sum to
be equal to $\beta(t)\boldsymbol{\mu}$ for all $t\in T$, the $\xi_a$, $%
\xi_a^{\prime}$ must be all 0 except possibly $\xi_b$ and $\xi_b^{\prime}$
for some $b$ with $1\le b\le n$. It follows that $\boldsymbol{\mu}=%
\boldsymbol{\mu}_{\alpha,\delta}$ where $\alpha=\xi_b\hat v_b$ and $%
\delta=\xi_b^{\prime}\hat v_b$. Thus $\boldsymbol{\mu}\in\cup\mathcal{M}%
^{\ast}_P$ as required.
\end{remark}

In fact more can be shown: 
Now let $\F$ be an arbitrary infinite field and let $\boldsymbol\mu=\boldsymbol\mu_{\alpha,\delta}\in\mathcal M^{\ast}$ with $\alpha,\delta$ linearly independent.
Given $\alpha^{\prime},\delta^{\prime}\in\widehat V$ with $\alpha^{\prime},\delta^{\prime}$ also linearly independent, there exists $g\in G$ such that $\alpha^{\prime}=\alpha g$ and $\delta^{\prime}=\delta g$ so $\boldsymbol\mu g=\boldsymbol\mu_{\alpha^{\prime},\delta^{\prime}}$.
It follows that  $\boldsymbol\mu G=\mathcal M^{\ast}-\cup\mathcal M^{\ast}_P$.
Moreover, the Zariski-closure of $\boldsymbol\mu G$, denoted by $\overline{\boldsymbol\mu G}$, is the whole of $\mathcal M^{\ast}$.
For this, first observe that an arbitrary submodule of $\mathcal M^{\ast}$ of the form $\mathcal M^{\ast}_P$ can be described as $\mathcal M^{\ast}_P=\F$-sp$(\{\xi\boldsymbol\varepsilon_i+\xi^{\prime}\widetilde{\boldsymbol\varepsilon}_i\colon 1\le i\le n\}) = (\xi\boldsymbol\varepsilon_1+\xi^{\prime}\widetilde{\boldsymbol\varepsilon}_1)G\cup\{\boldsymbol0\}$, where the elements $\xi,\xi^{\prime}$ of $\F$ are not both equal to zero.
Now set $\boldsymbol\lambda=\xi\boldsymbol\varepsilon_1+\xi^\prime\widetilde{\boldsymbol\varepsilon}_1+\widetilde{\boldsymbol\varepsilon}_2$ (resp., $\boldsymbol\lambda=\xi\boldsymbol\varepsilon_1+\xi^\prime\widetilde{\boldsymbol\varepsilon}_1+\boldsymbol\varepsilon_2$) if $\xi\ne0$ (resp., $\xi^\prime\ne0$).
Then $\boldsymbol\lambda\in \boldsymbol\mu G$.
Moreover, with $\widehat q=(q_i)$ where $q_1=0$ and $q_i=1$ for $i\ne1$ as in~\cite[Lemma~3.9]{IvanovaPallikaros2019} we see that $\xi\boldsymbol\varepsilon_1+\xi^\prime\widetilde{\boldsymbol\varepsilon}_1\in\overline{\boldsymbol\mu G}$.
Invoking~\cite[Lemma~3.1 and Remark~3.10(i)]{IvanovaPallikaros2019} we conclude that $\mathcal M^{\ast}_P\subseteq\overline{\boldsymbol\mu G}$.

\section{The $G$-submodule $\mathcal{M}^{\ast \ast }$}

\label{SecM**}

We assume that $|\mathbb{F}|>2$ throughout this section.

\subsection{Defining conditions}

The defining condition for the subset $\mathcal{M}^{\ast \ast }$ of $%
\boldsymbol{\Lambda }$ is that $\boldsymbol{\lambda }\in \mathcal{M}^{\ast
\ast }$ exactly when the algebra $\mathfrak{g}=\Theta ^{-1}(\boldsymbol{%
\lambda })$ has the property that $[v,v]\in\mathbb{F}$-sp$(v)$ for each $%
v\in V$. Clearly $\mathcal{M}^{\ast \ast }$ is a $G$-submodule of $%
\boldsymbol{\Lambda}$ containing $\mathcal{M}^{\ast}$. The defining property
for $\boldsymbol{\lambda}\in\boldsymbol{\Lambda}$ to belong to $\mathcal{M}%
^{\ast \ast }$ induces a function from $V-\{0\}$ to $\mathbb{F}$, where the
image $\omega_{\boldsymbol{\lambda}}(v)$ of a non-zero $v\in V$ is
determined by the relation $[v,v]=\omega_{\boldsymbol{\lambda}}(v)v$. By
assigning an arbitrary value for $\omega_{\boldsymbol{\lambda}}(0)$, this
last relation would then hold for all $v\in V$. Our aim is to extend $%
\omega_{\boldsymbol{\lambda}}$ to an element of $\widehat V$ so we define $%
\omega_{\boldsymbol{\lambda}}(0)=0$. We refer to $\omega_{\boldsymbol{\lambda%
}}$ as the \textbf{square factor function} for~$\mathfrak{g}$.

\medskip

We now check that $\omega_{\boldsymbol{\lambda}}$ is indeed a linear map
from $V$ to $\mathbb{F}$. For this, our assumption that $|\mathbb{F}|>2$ is
necessary. For simplicity, we will write $\omega$ in place of $\omega_{%
\boldsymbol{\lambda}}$ in the discussion that follows. First observe that $%
\omega (\alpha v)=\alpha \omega (v)$, for all $\alpha \in \mathbb{F}$ and $%
v\in V$. Expanding $[\alpha u+v,\alpha u+v]$ in two ways, we get%
\begin{eqnarray*}
\lbrack \alpha u+v,\alpha u+v] &=&\alpha ^{2}[u,u]+\alpha ([u,v]+[v,u])+[v,v]
\\
&=&\alpha ^{2}\omega (u)u+\alpha ([u,v]+[v,u])+\omega (v)v
\end{eqnarray*}%
and%
\begin{eqnarray*}
\lbrack \alpha u+v,\alpha u+v] &=&\omega (\alpha u+v)(\alpha u+v) \\
&=&\alpha \omega (\alpha u+v)u+\omega (\alpha u+v)v.
\end{eqnarray*}%
Therefore%
\begin{equation}
\alpha \omega (\alpha u+v)u+\omega (\alpha u+v)v=\alpha ^{2}\omega
(u)u+\alpha ([u,v]+[v,u])+\omega (v)v.  \label{square}
\end{equation}%
Taking $\alpha =1$ here gives%
\begin{equation*}
\omega (u+v)u+\omega (u+v)v=\omega (u)u+\omega (v)v+[u,v]+[v,u].
\end{equation*}%
Then%
\begin{equation}
\lbrack u,v]+[v,u]=(\omega (u+v)-\omega (u))u+(\omega (u+v)-\omega (v))v.
\label{commpre}
\end{equation}%
So%
\begin{eqnarray*}
\alpha \omega (\alpha u+v)u+\omega (\alpha u+v)v &=&\alpha ^{2}\omega
(u)u+\alpha ([u,v]+[v,u])+\omega (v)v \\
&=&\alpha ^{2}\omega (u)u+\alpha ((\omega (u+v)-\omega (u))u \\
&&+\alpha (\omega (u+v)-\omega (v))v+\omega (v)v.
\end{eqnarray*}%
Taking $u$ and $v$ to be linearly independent and equating coefficients of $%
u $ and of $v$ gives%
\begin{equation*}
\alpha \omega (\alpha u+v)=\alpha ^{2}\omega (u)+\alpha (\omega (u+v)-\omega
(u))
\end{equation*}%
and%
\begin{equation*}
\omega (\alpha u+v)=\alpha \omega (u+v)-\alpha \omega (v)+\omega (v).
\end{equation*}%
Cancelling an $\alpha $, $\alpha \neq 0$, in the first and equating the two
expressions for $\omega (\alpha u+v)$ shows that%
\begin{equation}
(\alpha -1)(\omega (u)+\omega (v)-\omega (u+v))=0.  \label{linear prelim}
\end{equation}

\bigskip

Since $\left\vert \mathbb{F}\right\vert >2$, we can take $\alpha $ and $%
\alpha -1$ both nonzero in (\ref{linear prelim}) and conclude that%
\begin{equation*}
\omega (u+v)=\omega (u)+\omega (v).
\end{equation*}

It is immediate that the last equation also holds when $u$ and $v$ are
linearly dependent in view of the fact that $\omega(\alpha
v)=\alpha\omega(v) $.

\medskip

Thus $\omega $ is a linear functional on $V$. Moreover, (\ref{commpre}) now
reads%
\begin{equation}
\lbrack u,v]+[v,u]=\omega (v)u+\omega (u)v.  \label{comm}
\end{equation}%
Since $\omega (v_{i})=\lambda _{iii}$ and $\omega (v_{j})=\lambda _{jjj}$,
we have%
\begin{equation*}
\lbrack v_{i},v_{j}]+[v_{j},v_{i}]=\lambda _{jjj}v_{i}+\lambda _{iii}v_{j}
\end{equation*}%
and we get (for distinct $i$, $j$ and $k$)%
\begin{eqnarray*}
\lambda _{iji}+\lambda _{jii} &=&\lambda _{jjj} \\
\lambda _{ijk}+\lambda _{jik} &=&0.
\end{eqnarray*}%
Recall also that $\lambda _{iij}=0$ for $i\neq j$ from the definition of $%
\mathcal{M}^{\ast \ast }$. So $\boldsymbol{\lambda }$ satisfies the
following conditions (all choices of subscripts are allowed but with
Convention ($\ddag $) observed):%
\begin{eqnarray}
\lambda _{ijk}+\lambda _{jik} &=&0  \notag \\
\lambda _{iij} &=&0  \label{M**} \\
\lambda _{iji}+\lambda _{jii} &=&\lambda _{jjj}.  \notag
\end{eqnarray}%
The conditions are independent, and there are%
\begin{equation*}
\binom{n}{2}(n-2)+n(n-1)+n(n-1)=\frac{n^{3}}{2}+\frac{n^{2}}{2}-n
\end{equation*}%
of them. Our aim is to show that the conditions~\eqref{M**} are in fact
defining conditions for $\mathcal{M}^{\ast \ast }$, so we suppose that these
conditions do hold for $\boldsymbol{\lambda }$. Then $[v_{i},v_{i}]=\lambda
_{iii}v_{i}$ and $[v_{i},v_{j}]+[v_{j},v_{i}]=\lambda _{jjj}v_{i}+\lambda
_{iii}v_{j}$. It follows that
\begin{eqnarray*}
\left[ \sum \xi _{i}v_{i},\sum \xi _{i}v_{i}\right] &=&\sum \xi
_{i}^{2}[v_{i},v_{i}]+\sum_{i\neq j}\xi _{i}\xi _{j}[v_{i},v_{j}] \\
&=&\sum \xi _{i}^{2}\lambda _{iii}v_{i}+\sum_{i<j}\xi _{i}\xi _{j}(\lambda
_{jjj}v_{i}+\lambda _{iii}v_{j}) \\
&=&\sum_{j}\left( \sum_{i}\xi _{i}\lambda _{iii}\right) \xi _{j}v_{j}.
\end{eqnarray*}%
With $v=\sum \xi _{i}v_{i}$, this says $[v,v]=\left( \sum_{i}\xi _{i}\lambda
_{iii}\right) v$. That shows that $\boldsymbol{\lambda }\in \mathcal{M}%
^{\ast \ast }$ and $\omega (\sum \xi _{i}v_{i})=\sum_{i}\xi _{i}\lambda
_{iii}$. Thus:

\begin{proposition}
\label{PropDefCondM**} Suppose $\left\vert \mathbb{F}\right\vert >2$. Then $%
\mathcal{M}^{\ast \ast }$ is defined by the conditions (\ref{M**}). %
Moreover, for $\boldsymbol{\lambda }\in \mathcal{M}^{\ast \ast }$, $\omega_{%
\boldsymbol{\lambda}}$ is a linear functional on $V$. Furthermore, $\dim
\mathcal{M}^{\ast \ast }=n^{3}/2-n^{2}/2+n$.
\end{proposition}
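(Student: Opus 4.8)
The statement packages together three claims: (1) the conditions \eqref{M**} are defining conditions for $\mathcal{M}^{\ast\ast}$; (2) for $\boldsymbol{\lambda}\in\mathcal{M}^{\ast\ast}$ the square factor function $\omega_{\boldsymbol{\lambda}}$ is a linear functional; and (3) $\dim\mathcal{M}^{\ast\ast}=n^3/2-n^2/2+n$. Crucially, essentially all the hard work has already been done in the discussion immediately preceding the statement. So my "proof" is really a matter of assembling the pieces in the right logical order and tying off the dimension count.

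Let me think carefully about the logical structure, because there's a subtlety about the direction of implication.

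The preceding discussion established:
- **Forward direction** (membership $\Rightarrow$ conditions): Starting from $\boldsymbol{\lambda}\in\mathcal{M}^{\ast\ast}$, i.e. $[v,v]\in\mathbb{F}\text{-sp}(v)$ for all $v$, they first proved $\omega=\omega_{\boldsymbol{\lambda}}$ is linear (this is where $|\mathbb{F}|>2$ is used, via equation \eqref{linear prelim} requiring $\alpha,\alpha-1$ both nonzero). Then from linearity they derived equation \eqref{comm}, and from that the conditions \eqref{M**}.
- **Reverse direction** (conditions $\Rightarrow$ membership): Assuming \eqref{M**} holds for $\boldsymbol{\lambda}$, they computed $[v,v]=(\sum_i\xi_i\lambda_{iii})v$ for $v=\sum\xi_i v_i$, showing $\boldsymbol{\lambda}\in\mathcal{M}^{\ast\ast}$ and identifying $\omega(v)=\sum_i\xi_i\lambda_{iii}$.

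So both directions are complete. The proof just needs to cite these.

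**The one genuinely new thing**: the independence of the conditions and the resulting dimension count. They stated there are $\binom{n}{2}(n-2)+n(n-1)+n(n-1)$ conditions, claimed independent, equal to $n^3/2+n^2/2-n$. The dimension of $\mathcal{M}^{\ast\ast}$ is then $n^3 - (n^3/2+n^2/2-n) = n^3/2-n^2/2+n$. But wait — I should check whether independence of the conditions is actually *proved* or just asserted. They say "The conditions are independent" — this is asserted, not demonstrated. So the dimension argument is: $\mathcal{M}^{\ast\ast}$ is the solution space of these linear equations, and *if* they're independent the codimension is their count. I need to verify independence, or find another route to the dimension.

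**Alternative clean route to dimension**: Rather than arguing about independence of the defining equations directly, one can exhibit a basis and count it. From \eqref{M**}: components $\lambda_{iij}=0$ ($i\neq j$) are forced to zero; the $\lambda_{ijk}$ for distinct $i,j,k$ satisfy $\lambda_{ijk}=-\lambda_{jik}$ giving one free parameter per unordered pair-with-third-index, namely $\binom{n}{2}(n-2)$ free components (choosing $i<j$); the $\lambda_{iii}$ are free, $n$ of them; and the pairs $(\lambda_{iji},\lambda_{jii})$ for $i\neq j$ satisfy the single relation $\lambda_{iji}+\lambda_{jii}=\lambda_{jjj}$, so given the $\lambda_{jjj}$, each ordered pair $(i,j)$ with $i\neq j$ contributes one free parameter (say $\lambda_{iji}$, with $\lambda_{jii}$ determined), i.e. $n(n-1)$ of them. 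Summing the free parameters: $\binom{n}{2}(n-2)+n+n(n-1)=\frac{n^3-3n^2+2n}{2}+n^2 = \frac{n^3-n^2+2n}{2}=\frac{n^3}{2}-\frac{n^2}{2}+n$. This matches and sidesteps any delicate independence claim.

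**The main obstacle.** Honestly, there is no deep obstacle — the analytic heart (linearity of $\omega$, use of $|\mathbb{F}|>2$) is already dispatched in the text. The one place requiring care is making the dimension count rigorous: the assertion that the conditions \eqref{M**} are independent should either be justified or, more transparently, replaced by the explicit free-parameter count above, which also doubles as an exhibition of a spanning/independent set of basis vectors. The only bookkeeping trap is avoiding double-counting among the three families of relations (the $\lambda_{iji}+\lambda_{jii}=\lambda_{jjj}$ relation couples off-diagonal and diagonal components, so one must be careful to count each free coordinate exactly once). Thus my plan is: first cite the forward implication (membership $\Rightarrow$ \eqref{M**} and linearity of $\omega$) established before the statement, invoking $|\mathbb{F}|>2$; then cite the reverse implication (\eqref{M**} $\Rightarrow$ membership, with $\omega(\sum\xi_iv_i)=\sum_i\xi_i\lambda_{iii}$) to conclude that \eqref{M**} is a set of defining conditions and that $\omega_{\boldsymbol{\lambda}}$ is linear; and finally compute $\dim\mathcal{M}^{\ast\ast}$ by counting free coordinates of a solution of \eqref{M**} as in the paragraph above, confirming independence of the relations in the process and arriving at $n^3/2-n^2/2+n$.
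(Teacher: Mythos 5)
Your proposal is correct and follows essentially the same route as the paper: the proposition is stated as a summary of the immediately preceding discussion (linearity of $\omega_{\boldsymbol{\lambda}}$ via $|\mathbb{F}|>2$, the derivation of \eqref{M**} from \eqref{comm}, and the converse computation showing $[v,v]=(\sum_i\xi_i\lambda_{iii})v$), and you assemble exactly those pieces. Your only departure is replacing the paper's bare assertion that the conditions are independent with an explicit free-parameter count yielding $\binom{n}{2}(n-2)+n+n(n-1)=n^3/2-n^2/2+n$, which is arithmetically correct and makes the dimension claim slightly more transparent.
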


In the following remark we collect some applications of the various
relations on $\mathcal{M}^{\ast\ast}$ we have obtained so far in this
section.

\begin{remark}
\label{RemCM**} (i) Suppose that $\mathop{\rm char}\nolimits\mathbb{F}=2$
and that $\mathbb{F}\ne\mathbb{F}_2$. Let $\boldsymbol{\lambda}\in\mathcal{C}%
\cap\mathcal{M}^{\ast \ast }$. Considering the defining conditions~%
\eqref{M**} and the defining conditions for $\mathcal{C}$ and $\mathcal{K}$
(see Section~\ref{SubsecDefCondBases}) it is easy to deduce that $%
\boldsymbol{\lambda}\in\mathcal{K}$. The assumption on $\mathbb{F}$ clearly
ensures that $\mathcal{K}\subseteq\mathcal{C}\cap\mathcal{M}^{\ast \ast }$.
Hence $\mathcal{K}=\mathcal{C}\cap\mathcal{M}^{\ast \ast }$ in this case.

\smallskip (ii) Suppose now that $\mathop{\rm char}\nolimits\mathbb{F}\neq 2$%
. In $\mathfrak{g}=\Theta ^{-1}(\boldsymbol{\lambda })$, $[u,v]+[v,u]=\omega
(v)u+\omega (u)v$, by~(\ref{comm}). When $\boldsymbol{\lambda }\in \mathcal{C%
}$, this reads $[u,v]=\frac{1}{2}\omega (v)u+\frac{1}{2}\omega (u)v$. Thus $%
\boldsymbol{\lambda }\in \mathcal{M}_{(1,1)}^{\ast }$. In view of
Proposition~\ref{PropIntersecM*}, this implies that $\mathcal{C}\cap
\mathcal{M}^{\ast \ast }=\mathcal{C}\cap \mathcal{M}^{\ast }=\mathcal{M}%
_{(1,1)}^{\ast }$.%

\smallskip (iii) Invoking Proposition~\ref{PropIntersecM*}, it now follows
from item (ii) of this remark that in the case $\mathop{\rm char}\nolimits%
\mathbb{F}\ne2$, we have $\mathcal{N}\cap \mathcal{M}^{\ast\ast}=0$ (resp., $%
\mathcal{N}\cap \mathcal{M}^{\ast\ast}=\mathcal{M}^{\ast }_{(1,1)}$) if $%
\mathop{\rm char}\nolimits\mathbb{F}\nmid n+1$ (resp., $\mathop{\rm char}%
\nolimits\mathbb{F}\mid n+1$). However, if $\mathop{\rm char}\nolimits%
\mathbb{F}=2$, we have $\mathcal{N}\cap \mathcal{M}^{\ast\ast}=(\mathcal{T}%
\cap\mathcal{C})\cap \mathcal{M}^{\ast\ast} = \mathcal{T}\cap\mathcal{K}=
\mathcal{U}$, in view of item~(i) of this remark. Since $\dim\mathcal{N}=%
\frac{n^3}2+\frac{n^2}2-n$ and $\dim\mathcal{M}^{\ast\ast}=\frac{n^3}2-\frac{%
n^2}2+n$, we get that $\dim(\mathcal{N}+\mathcal{M}^{\ast\ast})=n^3-\dim%
\mathcal{U}=\frac{n^3}2+\frac{n^2}2+n$, when $\mathop{\rm char}\nolimits%
\mathbb{F}=2$.
\end{remark}

Now for any algebra $\mathfrak{g}=\Theta^{-1}(\boldsymbol{\lambda})$ with $%
\boldsymbol{\lambda}\in\mathcal{M}^{\ast\ast}$, writing $\omega(\boldsymbol{%
\lambda},v)=\omega_{\boldsymbol{\lambda}}(v)$, we have $[u,v]+[v,u]=\omega(%
\boldsymbol{\lambda},v)u+\omega(\boldsymbol{\lambda},u)v$ from~\eqref{comm}.
For a linear functional $\mu$ on $V$, the trace of $u\mapsto\mu(v)u$ (a
diagonal map) is $n\mu(v)$, and the trace of $u\mapsto\mu(u)v$ is $\mu(v)$.
Thus $\mathrm{tr}(\boldsymbol{\lambda},v)+\widetilde{\mathrm{tr}}(%
\boldsymbol{\lambda},v)=(n+1)\omega(\boldsymbol{\lambda},v)$. So $\mathrm{tr}%
=-\widetilde{\mathrm{tr}}$ on $\mathcal{K}$, and this equality will hold on $%
\mathcal{M}^{\ast\ast}$ itself exactly when $\mathop{\rm char}\nolimits%
\mathbb{F}$ divides $n+1$. In that case, $\mathcal{T}\cap \mathcal{M}%
^{\ast\ast}=\widetilde{\mathcal{T}}\cap \mathcal{M}^{\ast\ast}$. We prove
the converse

\begin{proposition}
\label{PropTIntersM**} We have $\mathcal{T}\cap \mathcal{M}^{\ast\ast}=%
\widetilde{\mathcal{T}}\cap \mathcal{M}^{\ast\ast}$ if, and only if, $%
\mathop{\rm char}\nolimits\mathbb{F}$ divides $n+1$.
\end{proposition}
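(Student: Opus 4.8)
The forward implication has already been established in the paragraph preceding the statement, via the identity $\mathrm{tr}(\boldsymbol{\lambda},v)+\widetilde{\mathrm{tr}}(\boldsymbol{\lambda},v)=(n+1)\omega(\boldsymbol{\lambda},v)$ on $\mathcal{M}^{\ast\ast}$; so the task is to prove the converse. I would argue by contraposition: assuming $\mathrm{char}\,\F\nmid n+1$, I will exhibit a single structure vector lying in $\mathcal{T}\cap\mathcal{M}^{\ast\ast}$ but not in $\widetilde{\mathcal{T}}\cap\mathcal{M}^{\ast\ast}$, which shows the two intersections differ. The whole argument pivots on the same identity. Namely, if $\boldsymbol{\lambda}\in\mathcal{M}^{\ast\ast}$ has $\mathrm{tr}_{\boldsymbol{\lambda}}=0$, then $\widetilde{\mathrm{tr}}_{\boldsymbol{\lambda}}=(n+1)\omega_{\boldsymbol{\lambda}}$; since $\mathrm{char}\,\F\nmid n+1$ means $n+1\ne0$ in $\F$, we get $\widetilde{\mathrm{tr}}_{\boldsymbol{\lambda}}\ne0$ the moment $\omega_{\boldsymbol{\lambda}}\ne0$. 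Thus it suffices to produce one $\boldsymbol{\lambda}\in\mathcal{M}^{\ast\ast}$ with $\mathrm{tr}_{\boldsymbol{\lambda}}=0$ and $\omega_{\boldsymbol{\lambda}}\ne0$.

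To build this witness I would take the element $\boldsymbol{\lambda}=\boldsymbol{\mu}_{\widehat{v}_1,0}+(\mathbf{212}-\mathbf{122})$, where $\boldsymbol{\mu}_{\widehat{v}_1,0}=\sum_i\mathbf{i1i}\in\mathcal{M}^{\ast}$. This lies in $\mathcal{M}^{\ast\ast}$ because it is a sum of a member of $\mathcal{M}^{\ast}\subseteq\mathcal{M}^{\ast\ast}$ and a member of $\mathcal{K}\subseteq\mathcal{M}^{\ast\ast}$ (recall $[v,v]=0$ forces $[v,v]\in\F v$, so $\mathcal{K}\subseteq\mathcal{M}^{\ast\ast}$, and $\mathbf{122}-\mathbf{212}=\mathbf{ijj}-\mathbf{jij}$ with $i=1,j=2$ is one of the structure vectors already seen to lie in $\mathcal{K}$). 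Since all three maps are linear in $\boldsymbol{\lambda}$, the computation splits over the two summands. For the $\mathcal{M}^{\ast}$ part one has $\omega_{\boldsymbol{\mu}_{\widehat{v}_1,0}}=\widehat{v}_1$ and $\mathrm{tr}_{\boldsymbol{\mu}_{\widehat{v}_1,0}}=\widehat{v}_1$; for the $\mathcal{K}$ part $\omega$ vanishes (as $\lambda_{iii}=0$ on $\mathcal{K}$) while $\mathrm{tr}_{\mathbf{122}-\mathbf{212}}=\widehat{v}_1$ by~\eqref{trbydual}, so $\mathrm{tr}_{\mathbf{212}-\mathbf{122}}=-\widehat{v}_1$. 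Hence $\omega_{\boldsymbol{\lambda}}=\widehat{v}_1\ne0$ and $\mathrm{tr}_{\boldsymbol{\lambda}}=\widehat{v}_1-\widehat{v}_1=0$, and the identity then forces $\widetilde{\mathrm{tr}}_{\boldsymbol{\lambda}}=(n+1)\widehat{v}_1\ne0$. Confirming directly that $\boldsymbol{\lambda}$ satisfies the defining conditions~\eqref{M**} and recomputing both traces from~\eqref{trbydual} and~\eqref{trlabdatilde} is routine bookkeeping, and yields the same values. Consequently $\boldsymbol{\lambda}\in(\mathcal{T}\cap\mathcal{M}^{\ast\ast})\setminus(\widetilde{\mathcal{T}}\cap\mathcal{M}^{\ast\ast})$, which is the desired conclusion.

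The one genuinely delicate point is the choice of witness, and it is also where the main obstacle lies. If one tries to remain inside $\mathcal{M}^{\ast}$, where $\mathrm{tr}=\alpha+n\delta$ and $\omega=\alpha+\delta$, then imposing $\mathrm{tr}=0$ gives $\alpha=-n\delta$ and hence $\omega=(1-n)\delta$, which vanishes identically exactly when $\mathrm{char}\,\F\mid n-1$. Thus in every characteristic dividing $n-1$ no element of $\mathcal{M}^{\ast}$ can serve, and one is forced to leave $\mathcal{M}^{\ast}$. The remedy is precisely to add a skew summand from $\mathcal{K}$: because $\mathcal{K}\subseteq\mathcal{M}^{\ast\ast}$, because $\omega$ annihilates $\mathcal{K}$, and because $\mathrm{tr}$ is surjective on $\mathcal{K}$, such a summand lets us cancel the ordinary trace without perturbing $\omega$. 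This is what makes a single construction valid uniformly for all fields with $\mathrm{char}\,\F\nmid n+1$, and it is the key idea I would emphasize.
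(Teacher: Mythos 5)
Your proof is correct and follows essentially the same route as the paper: both arguments exhibit an explicit witness in $\mathcal{M}^{\ast\ast}$ lying in one of $\mathcal{T},\widetilde{\mathcal{T}}$ whose other trace comes out to $(n+1)\widehat{v}_1$, forcing $n+1=0$ precisely when the two intersections coincide. The only differences are cosmetic: you argue contrapositively with a witness in $\mathcal{T}$ built as an $\mathcal{M}^{\ast}$-plus-$\mathcal{K}$ sum and read off $\widetilde{\mathrm{tr}}$ from the identity $\mathrm{tr}+\widetilde{\mathrm{tr}}=(n+1)\omega$, whereas the paper writes down a witness in $\widetilde{\mathcal{T}}$ componentwise and computes $\mathrm{tr}$ directly.
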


\begin{proof}
Suppose $\mathcal T\cap \mathcal{M}^{\ast\ast}=\widetilde{\mathcal T}\cap \mathcal{M}^{\ast\ast}$.
Also let $\boldsymbol\lambda=(\lambda_{ijk})\in\boldsymbol\Lambda$, where $\lambda_{111}=1$, $\lambda_{212}=-1$, $\lambda_{122}=2$, $\lambda_{1jj}=1$ for all $j>2$, and all other $\lambda_{ijk}$ are equal to zero.
It is then immediate from equation~\eqref{trlabdatilde} and conditions~\eqref{M**} that $\boldsymbol\lambda\in\widetilde{\mathcal T}\cap \mathcal{M}^{\ast\ast}$.
Since $\mathcal T\cap \mathcal{M}^{\ast\ast}=\widetilde{\mathcal T}\cap \mathcal{M}^{\ast\ast}$, we have $\boldsymbol\lambda\in\mathcal T$ also, so $\sum_j\lambda_{1jj}=0$ from equation~\eqref{basisN}.
But $\sum_j\lambda_{1jj}=1+2+(n-2)=n+1$.
We conclude that $n+1=0$ in $\F$.
\end{proof}

Now let $\boldsymbol{\mu}=(\mu_{ijk})\in\boldsymbol{\Lambda}$ where $%
\mu_{111}=1$, $\mu_{j1j}=1$ for all $j>1$ and all other $\mu_{ijk}$ are
equal to 0. It follows from~\eqref{trbydual} and~\eqref{M**} that $%
\boldsymbol{\mu}\in\mathcal{M}^{\ast\ast}-\mathcal{T}$. Since $\boldsymbol{%
\Lambda}/\mathcal{T}\,(\simeq\widehat V)$ is irreducible, we can deduce that
$\mathcal{T}+\mathcal{M}^{\ast\ast}=\boldsymbol{\Lambda}$. Hence, involving
Propositions~\ref{PropL/TIsomToHatV} and~\ref{PropDefCondM**} we get $n^3 =
\dim(\mathcal{T }+ \mathcal{M}^{\ast\ast}) = (n^3-n)+(n^3/2-n^2/2+n)-\dim(%
\mathcal{T}\cap \mathcal{M}^{\ast})$. It follows that $\dim(\mathcal{T}\cap
\mathcal{M}^{\ast\ast})=n^3/2-n^2/2=\dim\mathcal{U}+n$. We thus have:

\begin{corollary}
\label{CorTTM**} Suppose that $\mathop{\rm char}\nolimits\mathbb{F}\mid n+1$%
. Then $(\mathcal{T}\cap\widetilde{\mathcal{T}})\cap\mathcal{M}^{\ast\ast}=%
\mathcal{T}\cap \mathcal{M}^{\ast\ast}$. In particular, $\dim((\mathcal{T}%
\cap\widetilde{\mathcal{T}})\cap\mathcal{M}^{\ast\ast})=n^3/2-n^2/2=\dim%
\mathcal{U}+n$.
\end{corollary}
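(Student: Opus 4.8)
The plan is to derive both assertions almost entirely from Proposition~\ref{PropTIntersM**} together with the dimension count carried out in the paragraph immediately preceding the statement. Since $\mathrm{char}\,\mathbb{F}\mid n+1$, Proposition~\ref{PropTIntersM**} gives at once the key identity
\begin{equation*}
\mathcal{T}\cap\mathcal{M}^{\ast\ast}=\widetilde{\mathcal{T}}\cap\mathcal{M}^{\ast\ast},
\end{equation*}
and the whole corollary should fall out of this.

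For the set equality I would argue by double inclusion. The inclusion $(\mathcal{T}\cap\widetilde{\mathcal{T}})\cap\mathcal{M}^{\ast\ast}\subseteq\mathcal{T}\cap\mathcal{M}^{\ast\ast}$ is trivial, holding for any field, since any $\boldsymbol{\lambda}$ in the left-hand side lies in particular in $\mathcal{T}$ and in $\mathcal{M}^{\ast\ast}$. For the reverse inclusion I would take $\boldsymbol{\lambda}\in\mathcal{T}\cap\mathcal{M}^{\ast\ast}$; by the displayed identity above we then have $\boldsymbol{\lambda}\in\widetilde{\mathcal{T}}\cap\mathcal{M}^{\ast\ast}$ as well, so in particular $\boldsymbol{\lambda}\in\widetilde{\mathcal{T}}$. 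Combining this with $\boldsymbol{\lambda}\in\mathcal{T}\cap\mathcal{M}^{\ast\ast}$ yields $\boldsymbol{\lambda}\in(\mathcal{T}\cap\widetilde{\mathcal{T}})\cap\mathcal{M}^{\ast\ast}$, which is exactly what is needed. Equivalently, one can phrase this as the observation that $(\mathcal{T}\cap\widetilde{\mathcal{T}})\cap\mathcal{M}^{\ast\ast}=(\mathcal{T}\cap\mathcal{M}^{\ast\ast})\cap(\widetilde{\mathcal{T}}\cap\mathcal{M}^{\ast\ast})$ is the intersection of two \emph{equal} submodules, hence equals either of them.

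For the "in particular" clause, I would simply transport the dimension already computed just before the statement, namely $\dim(\mathcal{T}\cap\mathcal{M}^{\ast\ast})=n^{3}/2-n^{2}/2=\dim\mathcal{U}+n$, through the set equality just proved; no further computation is required. I do not expect a genuine obstacle here: the substance of the argument was already absorbed into Proposition~\ref{PropTIntersM**} (where the divisibility hypothesis is actually used) and into the preceding dimension count obtained from $\mathcal{T}+\mathcal{M}^{\ast\ast}=\boldsymbol{\Lambda}$ via Propositions~\ref{PropL/TIsomToHatV} and~\ref{PropDefCondM**}. The only point requiring care is to invoke the divisibility hypothesis precisely once, in the nontrivial inclusion, and to make sure the trivial inclusion is not mistakenly presented as needing it.
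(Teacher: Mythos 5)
Your proposal is correct and is essentially the paper's own argument: the paper likewise invokes Proposition~\ref{PropTIntersM**} to write $(\mathcal{T}\cap\widetilde{\mathcal{T}})\cap\mathcal{M}^{\ast\ast}=\mathcal{T}\cap(\widetilde{\mathcal{T}}\cap\mathcal{M}^{\ast\ast})=\mathcal{T}\cap(\mathcal{T}\cap\mathcal{M}^{\ast\ast})=\mathcal{T}\cap\mathcal{M}^{\ast\ast}$, which is exactly your ``intersection of two equal submodules'' reformulation, and the dimension statement is read off from the count preceding the corollary just as you do.
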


\begin{proof}
Invoking Proposition~\ref{PropTIntersM**} we get that $(\mathcal T\cap\widetilde{\mathcal T})\cap\mathcal M^{\ast\ast}= \mathcal T\cap(\widetilde{\mathcal T}\cap\mathcal M^{\ast\ast})= \mathcal T\cap(\mathcal T\cap\mathcal M^{\ast\ast})=\mathcal T\cap\mathcal M^{\ast\ast}$, whenever $\Char\F$ divides $n+1$.
\end{proof}

Proposition~\ref{PropTIntersM**} and Corollary~\ref{CorTTM**} will play some
part in Section~\ref{SectGLVstructure}.

\subsection{The action of $GL(V)$}

Suppose that $\boldsymbol{\lambda }\in \mathcal{M}^{\ast \ast }$, with $%
[v,v]=\omega (v)v$ in the algebra $\mathfrak{g}=\Theta ^{-1}(\boldsymbol{%
\lambda })$. Then, comparing with Definition~\ref{DefAction}, we have for $%
\mathfrak{g}g$,
\begin{equation*}
\omega ^{\prime }(v)v=[v,v]^{\prime }=g^{-1}[gv,gv]=g^{-1}\omega
(gv)(gv)=\omega (gv)v.
\end{equation*}%
Thus $\omega ^{\prime }(v)=\omega (gv)=(\omega g)(v)$ (by the definition for
right action). Tagging $\omega $ for $\boldsymbol{\lambda }$ as $\omega _{%
\boldsymbol{\lambda }}$, we also have that $\omega _{(\boldsymbol{\lambda }+%
\boldsymbol{\mu })}=\omega _{\boldsymbol{\lambda }}+\omega _{\boldsymbol{\mu
}}$ and $\omega _{\alpha \boldsymbol{\lambda }}=\alpha \omega _{\boldsymbol{%
\lambda }}$. So $\boldsymbol{\lambda }\mapsto \omega _{\boldsymbol{\lambda }%
} $ is a $G$-homomorphism from $\mathcal{M}^{\ast \ast }$ to $\widehat V$.

Next we show that this $G$-homomorphism is surjective. For this, let an
arbitrary $\mu\in\widehat V$ be given with $\mu(v_i)=\mu_i$ for $1\le i\le n$%
. Define $\boldsymbol{\lambda}$ by $\lambda_{iii}=\mu_i$ and $%
\lambda_{iji}=\mu_j$ (for $i\ne j$) and all other components to be zero.
Clearly $\boldsymbol{\lambda}\in\mathcal{M}^{\ast \ast}$ since the defining
conditions~\eqref{M**} are all satisfied. Moreover, in $\Theta^{-1}(%
\boldsymbol{\lambda})$ we have that $[v_i,v_j]+[v_j,v_i]=\mu_jv_i+\mu_iv_j$,
true for all $i,j$ (including $i=j$). Now let $v=\sum_i\xi_iv_i\in V$.
Comparing with the discussion immediately before Proposition~\ref%
{PropDefCondM**} we get that $[v,v]=(\sum_i\xi_i\mu_i)v=\mu(v)v$. It follows
that $\mu(v)=\omega_{\boldsymbol{\lambda}}(v)$ for all $v\in V$. We thus
have:

\begin{corollary}
\label{CorolM**K} Suppose $|\mathbb{F}|>2$. The map $\boldsymbol{\lambda }%
\mapsto \omega _{\boldsymbol{\lambda }}$ is a $G$-homomorphism from $%
\mathcal{M}^{\ast \ast }$ onto $\widehat{V}$, the dual space of $V$ as a
right $G$-module. The kernel is $\mathcal{K}$. In particular, $\mathcal{M}%
^{\ast \ast }/\mathcal{K}\backsimeq \widehat{V}$ as $G$-modules.
\end{corollary}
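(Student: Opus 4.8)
The plan is to establish the three claims of Corollary~\ref{CorolM**K} in turn: that $\boldsymbol{\lambda}\mapsto\omega_{\boldsymbol{\lambda}}$ is a surjective $G$-homomorphism, and that its kernel is exactly $\mathcal{K}$. The first two points have essentially already been handled in the paragraphs immediately preceding the statement. Indeed, the computation using $\omega'(v)v=[v,v]'=g^{-1}[gv,gv]=\omega(gv)v$ shows $\omega_{\boldsymbol{\lambda}g}=\omega_{\boldsymbol{\lambda}}g$, giving the $G$-homomorphism property, and the explicit construction taking an arbitrary $\mu\in\widehat{V}$ with $\mu(v_i)=\mu_i$ and building $\boldsymbol{\lambda}$ with $\lambda_{iii}=\mu_i$, $\lambda_{iji}=\mu_j$ establishes surjectivity, since one checks $\omega_{\boldsymbol{\lambda}}=\mu$. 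So the proof I would write mainly consists of invoking these two facts and then identifying the kernel.

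For the kernel, I would argue by the equivalence $\boldsymbol{\lambda}\in\ker\omega \iff \omega_{\boldsymbol{\lambda}}=0 \iff [v,v]=0$ for all $v\in V$, which is precisely the defining condition for $\mathcal{K}$ given in Section~\ref{SecSubmodulesCK}. The point is that for $\boldsymbol{\lambda}\in\mathcal{M}^{\ast\ast}$ the product satisfies $[v,v]=\omega_{\boldsymbol{\lambda}}(v)v$ by Proposition~\ref{PropDefCondM**}, so $\omega_{\boldsymbol{\lambda}}$ being the zero functional is equivalent to $[v,v]=0$ for every $v$. One subtlety to address: $\mathcal{K}$ is a submodule of all of $\boldsymbol{\Lambda}$, and I must confirm $\mathcal{K}\subseteq\mathcal{M}^{\ast\ast}$ so that the kernel statement makes sense as a subset of the domain. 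This is immediate from the defining conditions, since any $\boldsymbol{\lambda}$ with $[v,v]=0$ for all $v$ certainly satisfies $[v,v]\in\mathbb{F}\text{-sp}(v)$, hence lies in $\mathcal{M}^{\ast\ast}$.

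Concretely, I would phrase it as: for $\boldsymbol{\lambda}\in\mathcal{M}^{\ast\ast}$ we have $[v,v]=\omega_{\boldsymbol{\lambda}}(v)v$ for all $v$, so $\omega_{\boldsymbol{\lambda}}=0$ if and only if $[v,v]=0$ for all $v\in V$, which is the defining condition for membership in $\mathcal{K}$. Thus $\ker(\boldsymbol{\lambda}\mapsto\omega_{\boldsymbol{\lambda}})=\mathcal{K}$. The final isomorphism $\mathcal{M}^{\ast\ast}/\mathcal{K}\backsimeq\widehat{V}$ then follows from the first isomorphism theorem for $G$-modules, combining surjectivity with the kernel computation.

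The main obstacle, such as it is, is purely expository rather than mathematical: nearly all the substantive work—the homomorphism property, surjectivity, and the representation $[v,v]=\omega_{\boldsymbol{\lambda}}(v)v$—is already available from Proposition~\ref{PropDefCondM**} and the surrounding discussion, so the only genuinely new content is the clean identification of the kernel with $\mathcal{K}$ via the defining conditions. I would be careful to invoke the hypothesis $|\mathbb{F}|>2$ explicitly, since it is exactly what guarantees (via Proposition~\ref{PropDefCondM**}) that $\omega_{\boldsymbol{\lambda}}$ is genuinely linear, without which the map $\boldsymbol{\lambda}\mapsto\omega_{\boldsymbol{\lambda}}$ would not land in $\widehat{V}$ at all.
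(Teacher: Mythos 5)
Your proposal is correct and follows essentially the same route as the paper: the corollary is there presented as a summary of the immediately preceding discussion (the computation $\omega'(v)=\omega(gv)$ for the $G$-homomorphism property and the explicit preimage construction for surjectivity), with the kernel identified exactly as you do, via $[v,v]=\omega_{\boldsymbol{\lambda}}(v)v$ and the defining condition $[v,v]=0$ for $\mathcal{K}$. Your added remarks on checking $\mathcal{K}\subseteq\mathcal{M}^{\ast\ast}$ and on where $|\mathbb{F}|>2$ enters are accurate and consistent with the paper.
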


\section{Linear degeneration}

\label{SectLinDegen}

Degeneration can be used for proving that certain $G$-submodules of the
space $\boldsymbol{\Lambda }$ of algebra structures over a field $\mathbb{F}$
are irreducible. Recall that for structure vectors $\boldsymbol{\lambda}$
and $\boldsymbol{\lambda}^{\prime}$, we say that $\boldsymbol{\lambda}$
degenerates to $\boldsymbol{\lambda}^{\prime}$ (denoted by $\boldsymbol{%
\lambda }\rightarrow \boldsymbol{\lambda^{\prime}}$) if $\boldsymbol{\lambda}%
^{\prime}$ belongs to Zariski-closure of the $G$-orbit of $\boldsymbol{%
\lambda}$. As an example, consider the submodule $\mathcal{U}$.
Recall Remark~\ref{RemUeta} that $\mathcal{U}=\boldsymbol{\eta }(\mathbb{F}%
G) $, where $\boldsymbol{\eta }=\mathbf{123}-\mathbf{213} $. Moreover, if $%
\boldsymbol{\lambda }\in \mathcal{M}^{\ast \ast }$ but $\boldsymbol{\lambda }%
\notin \mathcal{M}^{\ast }$, then $\boldsymbol{\lambda }$ degenerates to $%
\boldsymbol{\eta }$ by \cite[Lemma 4.4]{IvanovaPallikaros2019} applied to
structure vectors, when $\mathbb{F}$ is infinite. Since $\boldsymbol{\lambda
}(\mathbb{F}G)$ is closed, $\boldsymbol{\eta }\in \boldsymbol{\lambda }(%
\mathbb{F}G)$. Then $\mathcal{U}\mathcal{=\boldsymbol{\eta }(}\mathbb{F}%
G)\subseteq \boldsymbol{\lambda }(\mathbb{F}G)$. So if $\boldsymbol{\lambda }%
\in \mathcal{U}{-}\mathcal{M}^{\ast }$, then $\boldsymbol{\lambda }(\mathbb{F%
}G)=\mathcal{U}$. In particular, $\mathcal{U}/\mathcal{U}{\cap }\mathcal{M}%
^{\ast }$ is irreducible.

As we pointed out, when $\mathbb{F}$ is finite, \emph{everything} is closed.
As a substitute for closed sets we use $G$-submodules instead, and we make
an apparently toothless definition:

\begin{definition}
\emph{Let }$\boldsymbol{\lambda }$ and $\boldsymbol{\lambda }^{\prime }$ be
structure vectors over an arbitrary field $\mathbb{F}$.\emph{\ Then }$%
\boldsymbol{\lambda }^{\prime }$\emph{\ is called a \textbf{linear
degeneration} of }$\boldsymbol{\lambda }\emph{\ }$if $\boldsymbol{\lambda }%
^{\prime }\in \boldsymbol{\lambda }(\mathbb{F}G)$\emph{.}
\end{definition}

\noindent We also say that $\boldsymbol{\lambda }$ \textbf{linearly
degenerates }to $\boldsymbol{\lambda }^{\prime }$ and write $\boldsymbol{%
\lambda }\looparrowright \boldsymbol{\lambda }^{\prime }$. (Clearly if $%
\mathbb{F}$ is infinite and $\boldsymbol{\lambda }\rightarrow \boldsymbol{%
\lambda^{\prime}}$, then $\boldsymbol{\lambda }\looparrowright \boldsymbol{%
\lambda }^{\prime }$.) What makes this actually useful is that there is
something of an analogue of \cite[Lemma 3.9]{IvanovaPallikaros2019}. As in
that lemma, let $\widehat{q}$ be a sequence $(q_{1},\ldots ,q_{n})$ of
integers, and for $\boldsymbol{\lambda }\in \boldsymbol{\Lambda }$, define $%
\boldsymbol{\lambda }(\widehat{q})$ by ${\lambda }(\widehat{q}%
)_{ijk}=\lambda _{ijk}$ if $q_{i}+q_{j}-q_{k}=0$ and 0 if not.

\begin{theorem}
\label{TheorLinDeg} \label{lindeg}Let $\boldsymbol{\lambda }\in \boldsymbol{%
\Lambda }$ and suppose that $\lambda _{ijk}=0$ whenever $q_{i}+q_{j}-q_{k}<0$%
. Then if $\max (q_{i}+q_{j}-q_{k})<\left\vert \mathbb{F}\right\vert -1$, $%
\boldsymbol{\lambda }(\widehat{q})$ is a linear degeneration of $\boldsymbol{%
\lambda }$.
\end{theorem}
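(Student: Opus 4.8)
The plan is to realize $\boldsymbol{\lambda}(\widehat q)$ as an explicit $\mathbb{F}$-linear combination of translates $\boldsymbol{\lambda}g$, using a diagonal one-parameter family together with Vandermonde interpolation in place of the usual ``$t\to 0$'' limit. For $t\in\mathbb{F}-\{0\}$, let $g(t)\in G$ be the element whose matrix relative to the standard basis is the diagonal matrix $[g(t)]=\mathrm{diag}(t^{q_{1}},\ldots,t^{q_{n}})$; this lies in $G$ precisely because $t\neq0$, even when some $q_{i}$ are negative. Feeding $[g(t)]$ into formula~\eqref{basic}, the only surviving terms are those with $a=i$, $b=j$, $c=k$, so that $(\boldsymbol{\lambda}g(t))_{ijk}=t^{q_{i}+q_{j}-q_{k}}\lambda_{ijk}$. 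By hypothesis $\lambda_{ijk}=0$ whenever $q_{i}+q_{j}-q_{k}<0$, so every exponent occurring with a nonzero coefficient is a nonnegative integer, bounded above by $d:=\max(q_{i}+q_{j}-q_{k})$.

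First I would group the components by their exponent, writing $\boldsymbol{\lambda}g(t)=\sum_{e=0}^{d}t^{e}\boldsymbol{\lambda}_{e}$, where $\boldsymbol{\lambda}_{e}$ collects the components $\lambda_{ijk}$ with $q_{i}+q_{j}-q_{k}=e$ and is zero elsewhere. Directly from the definition of $\boldsymbol{\lambda}(\widehat q)$, the $e=0$ summand is $\boldsymbol{\lambda}_{0}=\boldsymbol{\lambda}(\widehat q)$, so it suffices to extract $\boldsymbol{\lambda}_{0}$ from the family $\{\boldsymbol{\lambda}g(t)\}$. The key step is Vandermonde interpolation: choose $d+1$ distinct nonzero scalars $t_{0},\ldots,t_{d}\in\mathbb{F}-\{0\}$. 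The $(d+1)\times(d+1)$ matrix $[t_{m}^{\,e}]_{0\le m,e\le d}$ is a Vandermonde matrix with distinct nodes, hence invertible over $\mathbb{F}$; inverting the linear system $\boldsymbol{\lambda}g(t_{m})=\sum_{e}t_{m}^{\,e}\boldsymbol{\lambda}_{e}$ expresses each $\boldsymbol{\lambda}_{e}$, and in particular $\boldsymbol{\lambda}_{0}=\boldsymbol{\lambda}(\widehat q)$, as an explicit $\mathbb{F}$-linear combination of the vectors $\boldsymbol{\lambda}g(t_{m})\in\boldsymbol{\lambda}(\mathbb{F}G)$. Since $\boldsymbol{\lambda}(\mathbb{F}G)$ is an $\mathbb{F}$-subspace, this gives $\boldsymbol{\lambda}(\widehat q)\in\boldsymbol{\lambda}(\mathbb{F}G)$, as required.

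The one point that needs care, and the place where the hypothesis enters, is the availability of the interpolation nodes. Because $g(t)$ must be invertible, the $t_{m}$ have to be nonzero, so I need $d+1$ distinct elements of $\mathbb{F}-\{0\}$; this is possible exactly when $|\mathbb{F}|-1\ge d+1$, that is, when $d<|\mathbb{F}|-1$, which is precisely the stated bound $\max(q_{i}+q_{j}-q_{k})<|\mathbb{F}|-1$. I expect this counting to be the only real obstacle, everything else being formal manipulation of~\eqref{basic}. It is worth noting that this argument is the finite-field-friendly replacement for the classical degeneration proof, where one instead keeps $[g(t)]=\mathrm{diag}(t^{q_{1}},\ldots,t^{q_{n}})$ and lets $t\to0$: the nonnegativity of the exponents makes the limit exist and equal $\boldsymbol{\lambda}(\widehat q)$, recovering $\boldsymbol{\lambda}\rightarrow\boldsymbol{\lambda}(\widehat q)$ over an infinite field.
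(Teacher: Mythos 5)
Your proof is correct and follows essentially the same route as the paper: both use the diagonal family $[g(t)]=\mathrm{diag}(t^{q_1},\dots,t^{q_n})$ together with the observation that $\boldsymbol{\lambda}g(t)$ depends polynomially on $t$ with degree at most $d=\max(q_i+q_j-q_k)$ and constant term $\boldsymbol{\lambda}(\widehat q)$, the hypothesis $d<|\mathbb{F}|-1$ supplying exactly the needed number of nonzero evaluation points. The only difference is how the constant term is extracted: you invert a Vandermonde system at $d+1$ distinct nonzero nodes to exhibit $\boldsymbol{\lambda}(\widehat q)$ explicitly as a linear combination of the $\boldsymbol{\lambda}g(t_m)$, whereas the paper argues dually that any linear functional annihilating $\boldsymbol{\lambda}(\mathbb{F}G)$ gives a scalar polynomial of degree less than $|\mathbb{F}|-1$ vanishing on all of $\mathbb{F}-\{0\}$, hence identically, so it also kills $\boldsymbol{\lambda}(\widehat q)$ --- two faces of the same degree count.
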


\begin{proof}
Let $\tau\in\F-\{0\}$ and take $g(\tau )\in G$ so that $[g(\tau )]$ is  the diagonal matrix having $\tau^{q_{i}}$ as its $(i,i)$-entry.
Then let $\boldsymbol{\lambda }(\tau )=\boldsymbol{\lambda }g(\tau )$, so
that $\lambda (\tau )_{ijk}=\tau ^{q_{i}+q_{j}-q_{k}}\lambda _{ijk}$, as in
\cite[Lemma 3.9]{IvanovaPallikaros2019}.
Suppose that $\zeta $ is a linear functional on $\boldsymbol{\Lambda }$
with $\boldsymbol{\lambda }(\mathbb{F}G)$ in its kernel. Then $\zeta (\boldsymbol{%
\lambda }(\tau ))=0$.
If $\zeta (\mathbf{ijk})=\zeta _{ijk}$, then $\zeta (%
\boldsymbol{\lambda }(\tau ))=\sum_{i,j,k}\zeta _{ijk}\tau
^{q_{i}+q_{j}-q_{k}}\lambda _{ijk}$.
Now let the polynomial $f(x)\in\F[x]$ be defined by $f(x)=\sum\zeta _{ijk}\lambda_{ijk}x^{q_{i}+q_{j}-q_{k}}$, where the sum is taken over all $(i,j,k)$ with $\lambda_{ijk}\ne0$ and, as usual, $x^0$ denotes the constant term $1$.
Then $f(\tau)=\zeta(\boldsymbol{\lambda }(\tau))\,(=0)$ for $\tau\ne0$ and $f(0)=\zeta(\boldsymbol{\lambda }(\hat q))$.
As the degree of $f(x)$ is strictly less than $\left\vert \mathbb{F}\right\vert-1$ it must be the zero polynomial.
So $\zeta (\boldsymbol{\lambda }(\widehat{q}))=0$.
This being the case for all linear functionals $\zeta $ having the
subspace $\boldsymbol{\lambda }(\mathbb{F}G)$ in their kernels, we get $\boldsymbol{%
\lambda }(\widehat{q})\in \boldsymbol{\lambda }(\mathbb{F}G)$. That is, $\boldsymbol{%
\lambda }(\widehat{q})$ is a linear degeneration of $\boldsymbol{\lambda }$.
\end{proof}

In the following example we discuss some applications of Theorem~\ref%
{TheorLinDeg}.

\begin{example}
\label{ExampleApplThLD} (i) The sequence $\widehat{q}$ used in \cite[Lemma
4.4]{IvanovaPallikaros2019} had just 1's and 2's in it, making $\max
(q_{i}+q_{j}-q_{k})=3$ (this maximum in general is $2\max (q_{1},\ldots
,q_{n})-\min (q_{1},\ldots ,q_{n})$). So if $\boldsymbol{\lambda }\in
\mathcal{M}^{\ast \ast }$ and $\boldsymbol{\lambda }\notin \mathcal{M}^{\ast
}$, then $\boldsymbol{\lambda }$ linearly degenerates to $\boldsymbol{\eta }$
if $\left\vert \mathbb{F}\right\vert \geq 5$. With this restriction, $%
\mathcal{U}/\mathcal{U}\cap \mathcal{M}^{\ast }$ is still irreducible. We
save examining smaller fields until later.

\smallskip (ii) Similarly, now invoking~\cite[Lemma~5.4]%
{IvanovaPallikaros2019} we get that if $\boldsymbol{\lambda}\in\boldsymbol{%
\Lambda}-\mathcal{M}^{\ast\ast}$ and $|\mathbb{F}|\ge5$, then $\boldsymbol{%
\lambda}$ linearly degenerates to $\boldsymbol{\delta}$. Note that in the
proof of that lemma, the $q_{i}$ are either 1 or 2, thus $%
\max(q_{i}+q_{j}-q_{k})=3$ again. %
\end{example}

However, here is a linear degeneration important for the structure of $%
\boldsymbol{\Lambda }$ when $\mathrm{char}\mathbb{F}=2$:

\begin{proposition}
\label{C/K}If $\mathrm{char}\mathbb{F}=2$ and $\left\vert \mathbb{F}%
\right\vert \geq 8$, then the $G$-module $\mathcal{C}/\mathcal{K}$ is
irreducible.
\end{proposition}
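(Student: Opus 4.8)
The plan is to show that every nonzero $G$-submodule of $\mathcal{C}/\mathcal{K}$ is everything. So I would pick a $G$-submodule $W'$ of $\mathcal{C}$ with $\mathcal{K}\subsetneq W'$, choose some $\boldsymbol{\lambda}\in W'\setminus\mathcal{K}$, and aim to force $W'=\mathcal{C}$. The engine driving this is linear degeneration to the generator $\boldsymbol{\delta}=\mathbf{112}$ of $\mathcal{N}$: once I can place $\boldsymbol{\delta}$ inside $W'$, Proposition~\ref{PropNdelta} gives $\mathcal{N}=\boldsymbol{\delta}(\F G)\subseteq W'$, after which it only remains to absorb the rest of $\mathcal{C}$.

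To reach $\boldsymbol{\delta}$ I first need $\boldsymbol{\lambda}\notin\mathcal{M}^{\ast\ast}$, and this is exactly where characteristic $2$ helps. Since $\Char\F=2$ and $\F\neq\mathbb{F}_2$ (guaranteed by $|\F|\geq 8$), Remark~\ref{RemCM**}(i) identifies $\mathcal{C}\cap\mathcal{M}^{\ast\ast}=\mathcal{K}$, so any $\boldsymbol{\lambda}\in\mathcal{C}\setminus\mathcal{K}$ automatically escapes $\mathcal{M}^{\ast\ast}$. Because $|\F|\geq 8\geq 5$, Example~\ref{ExampleApplThLD}(ii) then applies and yields $\boldsymbol{\lambda}\looparrowright\boldsymbol{\delta}$, that is $\boldsymbol{\delta}\in\boldsymbol{\lambda}(\F G)\subseteq W'$; hence $\mathcal{N}\subseteq W'$. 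This is also the reason the hypothesis reads $|\F|\geq 8$ rather than $\geq 5$: in characteristic $2$ the field order is a power of $2$, and $8$ is the smallest such order exceeding the excluded value $4$.

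Finally I would establish $\mathcal{C}=\mathcal{N}+\mathcal{K}$, so that $W'\supseteq\mathcal{N}+\mathcal{K}=\mathcal{C}$ forces $W'=\mathcal{C}$ and the quotient is irreducible. This follows from the adjoint trace map: by Proposition~\ref{PropIsomDualV} the restriction $\mathrm{tr}\colon\mathcal{C}\to\widehat V$ is onto with kernel $\mathcal{N}$, while $\mathrm{tr}\colon\mathcal{K}\to\widehat V$ is also onto (with kernel $\mathcal{U}$); given $\boldsymbol{c}\in\mathcal{C}$ I can subtract a $\boldsymbol{k}\in\mathcal{K}$ with $\mathrm{tr}_{\boldsymbol{k}}=\mathrm{tr}_{\boldsymbol{c}}$ to land $\boldsymbol{c}-\boldsymbol{k}$ in $\mathcal{N}$, giving $\boldsymbol{c}\in\mathcal{N}+\mathcal{K}$. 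Equivalently, the whole argument can be packaged as an isomorphism $\mathcal{C}/\mathcal{K}\cong\mathcal{N}/\mathcal{U}$ (using $\mathcal{C}=\mathcal{N}+\mathcal{K}$ and $\mathcal{N}\cap\mathcal{K}=\mathcal{K}\cap\mathcal{T}=\mathcal{U}$) together with the irreducibility of $\mathcal{N}/\mathcal{U}$, the latter resting on the characteristic-$2$ identity $\mathcal{N}\cap\mathcal{M}^{\ast\ast}=\mathcal{U}$ from Remark~\ref{RemCM**}(iii).

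I expect the only real subtlety to be the two characteristic-$2$ bookkeeping facts, namely that every $\boldsymbol{\lambda}\in\mathcal{C}\setminus\mathcal{K}$ genuinely lies outside $\mathcal{M}^{\ast\ast}$ and that $\mathcal{C}=\mathcal{N}+\mathcal{K}$; the degeneration step itself is handed to us by Example~\ref{ExampleApplThLD}(ii), so no fresh degeneration computation is required, and the role of the size restriction is merely to make that cited degeneration available.
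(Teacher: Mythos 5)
Your proposal is correct, and its first half coincides exactly with the paper's proof: both use Remark~\ref{RemCM**}(i) to identify $\mathcal{C}\cap\mathcal{M}^{\ast\ast}=\mathcal{K}$, so that any $\boldsymbol{\lambda}\in\mathcal{C}-\mathcal{K}$ lies outside $\mathcal{M}^{\ast\ast}$, and then apply Example~\ref{ExampleApplThLD}(ii) to conclude $\boldsymbol{\lambda}\looparrowright\boldsymbol{\delta}$, placing $\boldsymbol{\delta}=\mathbf{112}$ in $\boldsymbol{\lambda}(\F G)$. Where you diverge is the endgame. The paper finishes by hand: from $\mathbf{112}$ it gets all $\mathbf{iij}$ by permutations, and then conjugates $\mathbf{112}$ by the unipotent with matrix $\begin{bmatrix}1&\alpha\\0&1\end{bmatrix}$ (padded by $I_{n-2}$) for varying $\alpha\neq0$, reading off that $-\alpha\mathbf{111}+\alpha^{2}\mathbf{222}$ lies in $\boldsymbol{\lambda}(\F G)+\mathcal{K}$ and solving for $\mathbf{111}$, $\mathbf{222}$ separately; this shows directly that $\boldsymbol{\lambda}(\F G)+\mathcal{K}$ contains the coset basis of $\mathcal{C}/\mathcal{K}$. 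You instead invoke Proposition~\ref{PropNdelta} to upgrade $\boldsymbol{\delta}\in W'$ to $\mathcal{N}\subseteq W'$, and then prove the clean module-theoretic identity $\mathcal{C}=\mathcal{N}+\mathcal{K}$ (valid in characteristic $2$ because $\mathcal{K}\subseteq\mathcal{C}$ and $\mathrm{tr}$ restricted to $\mathcal{K}$ is already onto $\widehat V$ by Proposition~\ref{PropIsomDualV}, so any $\boldsymbol{c}\in\mathcal{C}$ can be corrected by some $\boldsymbol{k}\in\mathcal{K}$ into $\mathcal{C}\cap\mathcal{T}=\mathcal{N}$). This buys you a computation-free finish that reuses work already done for Proposition~\ref{PropNdelta}, at the cost of leaning on that proposition; the paper's version is self-contained past the degeneration step and, as a by-product, makes visible that the same computation still works over $\mathbb{F}_{4}$ (where two distinct nonzero $\alpha$'s suffice), a fact it exploits in the closing remark of the section. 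One caution on your alternative packaging: deducing irreducibility of $\mathcal{N}/\mathcal{U}$ from Proposition~\ref{PropLinDegDelta} would be a forward reference to the transvection section, so it is good that your primary argument does not actually need it.
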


\begin{proof}
Recall from Remark~\ref{RemCM**}(i) that  $\mathcal{K}=\mathcal{C}\cap \mathcal  M^{\ast \ast }$ if $\Char\F=2$ and $\mathbb{F}%
\neq \mathbb{F}_{2}$.
Hence, Example~\ref{ExampleApplThLD}(ii) applies when $\boldsymbol{\lambda }%
\in \mathcal{C}-\mathcal{K}$ to show that $\boldsymbol{%
\lambda }\looparrowright \mathbf{112}=\boldsymbol{\delta }$ when $%
\left\vert \mathbb{F}\right\vert \geq 5$.
So for any $\mathbb{F}$ satisfying the hypothesis, infinite or not, $\mathbf{%
112}\in \boldsymbol{\lambda }(\mathbb{F}G)$. Then by index permutations, we get $%
\mathbf{iij}\in \boldsymbol{\lambda }(\mathbb{F}G)$ for all $i$ and $j\neq i$.
Our goal is to prove that $\boldsymbol{\lambda }(\mathbb{F}G)+\mathcal{K}=%
\mathcal{C}$. Recalling Remark~\ref{RemSkewAlg}(ii) that the cosets $\mathbf{iii}+%
\mathcal{K}$ and $\mathbf{iij}+\mathcal{K}$ (for $i\ne j$) form a basis for $%
\mathcal{C}/\mathcal{K}$, we need the triples $\mathbf{iii}$ to be
in $\boldsymbol{\lambda }(\mathbb{F}G)+\mathcal{K}$.\newline
\qquad Let $\alpha \in \mathbb{F}$, $\alpha \neq 0$, and let $g\in G$ with
\begin{equation*}
[g]=%
\begin{bmatrix}
1 & \alpha & 0 \\
0 & 1 & 0 \\
0 & 0 & I_{n-2}%
\end{bmatrix}%
.
\end{equation*}%
Then by Equation~\eqref{TripleChange}%
\begin{eqnarray*}
\mathbf{112}g &=&\sum_{i,j,k}g_{1i}g_{1j}g_{k2}^{(-1)}\mathbf{ijk} \\
&=&\mathbf{112}-\alpha \mathbf{111}+\alpha ^{2}\mathbf{222}-\alpha ^{3}%
\mathbf{221} \\
&&+\alpha (\mathbf{122}+\mathbf{212})-\alpha ^{2}(\mathbf{121}+\mathbf{211})
\end{eqnarray*}%
The last two terms are in $\mathcal{K}$, and the first and fourth are in
$\boldsymbol{\lambda }(\mathbb{F}G)$. So $-\alpha \mathbf{111}+\alpha ^{2}%
\mathbf{222}+\mathcal{K}\in \boldsymbol{\lambda }(\mathbb{F}G)+\mathcal{K}%
$. As this holds for any $\alpha $, we get that $\mathbf{111}$ and $%
\mathbf{222}$ separately belong to $\boldsymbol{\lambda }(\mathbb{F}G)+\mathcal{K%
}$, and now permutations show that all $\mathbf{iii}$ are in $\boldsymbol{%
\lambda }(\mathbb{F}G)+\mathcal{K}$, as needed. Incidentally, the
equality $\mathbf{112}(\mathbb{F}G)+\mathcal{K}=\mathcal{C}$ will
hold for $\mathbb{F}=\mathbb{F}_{4}$, too, there still being enough $\alpha $%
's for this last argument to work.
\end{proof}

In the next section we shall see that $\mathcal{C}/\mathcal{K}$ is also
irreducible when $\mathbb{F}=\mathbb{F}_{4}$.

\section{Characteristic 2}

\label{SectChar2}

For this section assume that the scalar field has characteristic 2 and is
perfect, so that the Frobenius map $\alpha \mapsto \alpha ^{2}$ is an
automorphism. The goal here is to analyze the quotient $\mathcal{C}/\mathcal{%
K}$ as a $G$-module. We have seen that, in fact, linear degeneration implies
that the module is irreducible for $\left\vert \mathbb{F}\right\vert \geq 8$
(Proposition \ref{C/K}). But we want to point out some other features of
that module.

\subsection{An action on $\Gamma V$}

Let $\boldsymbol{\lambda }\in \mathcal{C}$ and $\mathfrak{g}=\Theta ^{-1}(%
\boldsymbol{\lambda })$. For $v\in V$, define $\Sigma _{\boldsymbol{\lambda }%
}(v)=[v,v]$, the squaring map. Since $\mathfrak{g}$ is commutative, $\Sigma
_{\boldsymbol{\lambda }}$ is additive; but as $[\alpha v,\alpha v]=\alpha
^{2}[v,v]$, $\Sigma _{\boldsymbol{\lambda }}$ is semilinear with respect to
the Frobenius $\alpha \mapsto \alpha ^{2}$. The set $\Gamma V$ of semilinear
maps $V\longrightarrow V$ relative to the Frobenius is an $\mathbb{F}$-space
(and as such, isomorphic to $\mathbb{F}^{n\times n}$), and the map $\Sigma :%
\boldsymbol{\lambda }\mapsto \Sigma _{\boldsymbol{\lambda }} $ is linear.
The kernel of $\Sigma $ is $\mathcal{K}$, so the space $\mathcal{C}/\mathcal{%
K}$ is isomorphic to a subspace of $\Gamma V$. What about the $G$-action?
Let $\boldsymbol{\lambda }^{\prime }=\boldsymbol{\lambda }g$ and $\mathfrak{g%
}^{\prime }=\Theta ^{-1}(\boldsymbol{\lambda }^{\prime })$. Then $%
[v,v]^{\prime }=g^{-1}[gv,gv]$ (see Definition~\ref{DefAction}), so that $%
\Sigma _{\boldsymbol{\lambda }^{\prime }}(v)=g^{-1}\Sigma _{\boldsymbol{%
\lambda }}(gv)$. Since we want $\boldsymbol{\lambda }\mapsto \Sigma _{%
\boldsymbol{\lambda }}$ to be a $G$-map of \emph{right} $G$-modules, the
required action on $\Gamma V$ is defined by $\varphi \ast g:v\mapsto
g^{-1}\varphi (gv)$; that is, $\varphi \ast g=g^{-1}\circ \varphi \circ g$.

Here is this last formula in matrix terms: recall from Section~\ref%
{SectAlgSetUp} that the standard basis for $V$ is $v_{1},\ldots ,v_{n}$, and
a linear transformation $g$ (acting on the left) is presented as matrix $%
[g_{ij}]$ with $gv_{j}=\sum_{i}g_{ij}v_{i}$. The entries of transformation $%
\varphi $ as a matrix are given by $\varphi (v_{j})=\sum_{i}\varphi
_{ij}v_{i}$ too, but in the composition $\varphi \circ g$, we have
\begin{eqnarray*}
\varphi (g(v_{j})) &=&\varphi
(\sum_{i}g_{ij}v_{i})=\sum_{i}g_{ij}^{2}\varphi (v_{i}) \\
&=&\sum_{i,k}g_{ij}^{2}\varphi _{ki}v_{k}=\sum_{k}\left( \sum_{i}\varphi
_{ki}g_{ij}^{2}\right) v_{k}.
\end{eqnarray*}%
This makes the final matrix for $\varphi \ast g=g^{-1}\circ \varphi \circ g$
to be $[\varphi _{ij}^{\prime }]$, with $\varphi _{ij}^{\prime
}=\sum_{k,l}g_{ik}^{(-1)}\varphi _{kl}g_{lj}^{2}$.

\medskip

In order to simplify notation, it will be convenient from this point and up
to the end of Subsection~\ref{SubsecG-modStrGammaV} to regard both $\mathrm{%
End}_{\mathbb{F}}V$ and $\Gamma V$ as spaces of matrices (and accordingly
for the subset $GL(V)$ of $\mathrm{End}_{\mathbb{F}}V$). More specifically,
by $h\in\mathrm{End}_{\mathbb{F}}V$ (resp., by $\psi\in\Gamma V$) we will
mean the matrix representing a suitable transformation with respect to the
standard basis of $V$. (Also note that the action on $\boldsymbol{\Lambda}$
(resp., $\Gamma V$) by the group of invertible linear transformations from $%
V $ to $V$ we have been considering, induces in an obvious way an action on $%
\boldsymbol{\Lambda}$ (resp., $\Gamma V$) by the corresponding group of
matrices.)

Thus, with the notation described immediately above,
we have (for $\varphi\in\Gamma V$ and $g\in GL(V)$)
\begin{equation}
\varphi \ast g=g^{-1}\varphi g^{(2)},  \label{starop}
\end{equation}%
where the very last matrix is $g$ with its entries squared.

Now we corroborate (\ref{starop}) by using the map $\Sigma $. To match the
matrix indexing, we present the relative basis members for $\mathcal{C}/%
\mathcal{K}$ as $\mathbf{jji}$, allowing $i=j$. Then $\Sigma _{\mathbf{jji}%
}\,(v_{j})=v_{i}$, and all the other basis products with $\Sigma _{\mathbf{%
jji}}$ are 0. So $\Sigma _{\mathbf{jji}}=e_{ij}$, the $ij$ matrix unit. For $%
g\in G$, we have%
\begin{eqnarray*}
\mathbf{jji}g &=&\sum_{a,b,c}g_{ja}g_{jb}g_{ci}^{(-1)}\mathbf{abc} \\
&=&\sum_{a,c}g_{ja}g_{ja}g_{ci}^{(-1)}\mathbf{aac}%
+\sum_{a<b,c}g_{ja}g_{jb}g_{ci}^{(-1)}(\mathbf{abc}+\mathbf{bac})
\end{eqnarray*}%
by \eqref{TripleChange}. Applying the (linear) map $\Sigma $ and observing
that $\mathbf{abc}+\mathbf{bac}\in \mathcal{K}$, we get%
\begin{equation*}
\Sigma _{\mathbf{jji}g}=\Sigma _{\mathbf{jji}}\ast
g=\sum_{a,c}g_{ci}^{(-1)}g_{ja}^{2}\Sigma _{\mathbf{aac}}.
\end{equation*}%
Since $\Sigma _{\mathbf{aac}}=e_{ca}$, the last equation becomes
\begin{equation*}
e_{ij}\ast g=\sum_{a,c}g_{ci}^{(-1)}g_{ja}^{2}e_{ca}.
\end{equation*}%
Then with $\varphi =\sum_{i,j}\varphi _{ij}e_{ij}$, this gives%
\begin{equation*}
\varphi \ast g=\sum_{a,c}\sum_{i,j}g_{ci}^{(-1)}\varphi
_{ij}g_{ja}^{2}e_{ca}.
\end{equation*}%
So $\varphi \ast g=g^{-1}\varphi g^{(2)}$ indeed, the formula in (\ref%
{starop}).

\subsection{$G$-module structure of $\Gamma V$}

\label{SubsecG-modStrGammaV}

Continuing to regard $\Gamma V$ and $\mathrm{End}_{\mathbb{F}}V$ as spaces
of matrices, recall that in Proposition \ref{C/K} we saw that $\Gamma V$ is
irreducible as a $G$-module under the action $\varphi \mapsto \varphi \ast g$
when $\left\vert \mathbb{F}\right\vert \geq 8$. First notice that if the
entries in $g$ are actually all in $\mathbb{F}_{2}$, then $g^{(2)}=g$ and $%
\varphi \ast g=g^{-1}\varphi g$. In particular, permutation matrices still
give permutations with the $\ast $ operation. Thus if $W$ is a $G$-submodule
of $\Gamma V$ and one matrix unit $e_{ij}\in W$ with $i\neq j$, then all
such $e_{ij}$ are in $W$. Similarly, one $e_{ii}$ in $W$ implies that all $%
e_{ii}\in W$. Our identification of $\Sigma _{\mathbf{jji}}$ with $e_{ij}$,
along with these comments, is in effect what is involved in the proof of
Proposition \ref{C/K}.

Let $W$ be a $G$-submodule of $\Gamma V$. In presenting matrices we shall
often write them in terms of the matrix units. Let $e$ and $f$ be two
\textquotedblleft off-diagonal\textquotedblright\ ($i\neq j$) matrix units
for which $ef=fe=0$. Then consider the map%
\begin{equation*}
e\&f:\varphi \mapsto \varphi \ast (I+e+f)+\varphi \ast (I+e)+\varphi \ast
(I+f)+\varphi
\end{equation*}%
for $\varphi \in \Gamma V$. Because the entries in the three matrices are
all in $\mathbb{F}_{2}$ and each matrix has order 2, the sum here is%
\begin{equation*}
(I+e+f)\varphi (I+e+f)+(I+e)\varphi (I+e)+(I+f)\varphi (I+f)+\varphi ,
\end{equation*}%
and this simplifies to $e\varphi f+f\varphi e$. If $\varphi \in W$, then $%
e\&f(\varphi )\in W$. Suppose that $\varphi $ has a nonzero off-diagonal
entry. As $W$ is closed under permutations, we may assume that $\varphi
_{12}\neq 0$. Then $e_{21}\&e_{31}(\varphi )=\varphi _{12}e_{31}+\varphi
_{13}e_{21}$, and $e_{13}\&e_{23}(\varphi _{12}e_{31}+\varphi
_{13}e_{21})=\varphi _{12}e_{23}$. So $e_{23}\in W$, and then $W$ contains
all $e_{ij}$, $i\neq j$. Now take $g=I+\alpha e_{21}$, $\alpha \neq 0$. Then
$g^{-1}=g$ and $g^{(2)}=I+\alpha ^{2}e_{21}$. We get%
\begin{equation}
e_{12}\ast g+e_{12}=\alpha e_{22}+\alpha ^{2}e_{11}+\alpha ^{3}e_{21}.
\label{eii}
\end{equation}%
Since $e_{21}\in W$, we conclude that $e_{22}+\alpha e_{11}\in W$. If $%
\left\vert \mathbb{F}\right\vert \geq 4$, we then get $e_{11}$ (and $e_{22}$%
) in $W$, and then by permutations, all $e_{ii}\in W$. So $W=\Gamma V$.

On the other hand, if all we have to begin with is that $e_{11}\in W$, take $%
g=I+e_{12}$ to produce%
\begin{equation*}
(I+e_{12})e_{11}(I+e_{12})+e_{11}=e_{12}
\end{equation*}%
in $W$, and then apply the preceding discussion to see that $W=\Gamma V$
again. In summary:

\begin{proposition}
\label{PropGammaV} If $\left\vert \mathbb{F}\right\vert \geq 4$, then $%
\mathcal{C}/\mathcal{K}\backsimeq \Gamma V$ is an irreducible $G$-module.
\end{proposition}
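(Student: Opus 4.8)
The plan is to use the already-established isomorphism $\mathcal{C}/\mathcal{K}\backsimeq\Gamma V$, so that everything reduces to proving that $\Gamma V$ is irreducible under the action $\varphi\ast g=g^{-1}\varphi g^{(2)}$ of~\eqref{starop}. I would take an arbitrary nonzero $G$-submodule $W\subseteq\Gamma V$ and aim to show that $W$ contains every matrix unit $e_{ij}$, forcing $W=\Gamma V$. The crucial preliminary observation is that when $g$ has all entries in $\mathbb{F}_2$ one has $g^{(2)}=g$, so $\ast$-multiplication by such $g$ is ordinary conjugation; in particular permutation matrices act by permuting the indices, and hence $W$ is stable under the symmetric group on $\{1,\dots,n\}$. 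Consequently it suffices to place a single off-diagonal unit and a single diagonal unit in $W$.

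For the off-diagonal units I would exploit the operator $e\&f$: since it is an $\mathbb{F}$-linear combination of the four $\ast$-translates by the $\mathbb{F}_2$-matrices $I,I+e,I+f,I+e+f$, it maps $W$ into $W$, and the computation already recorded collapses it to $\varphi\mapsto e\varphi f+f\varphi e$, which on matrix units reads $e_{ab}\varphi e_{cd}=\varphi_{bc}e_{ad}$. Starting from any $\varphi\in W$ with a nonzero off-diagonal entry (which, after a permutation, I may take to be $\varphi_{12}\neq0$), the two successive applications $e_{21}\&e_{31}$ and then $e_{13}\&e_{23}$ strip $\varphi$ down to $\varphi_{12}e_{23}$, so that $e_{23}\in W$ and then all off-diagonal $e_{ij}\in W$. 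If instead the chosen nonzero element of $W$ is purely diagonal, I would first manufacture an off-diagonal unit: after a permutation we may assume $\varphi_{11}\neq0$, and a direct computation with $g=I+\alpha e_{21}$ gives $\varphi\ast g+\varphi=(\alpha\varphi_{11}+\alpha^{2}\varphi_{22})e_{21}$; choosing $\alpha\neq0$ with $\alpha\varphi_{11}+\alpha^{2}\varphi_{22}\neq0$ (possible since this polynomial in $\alpha$ is not identically zero on $\mathbb{F}^{\times}$) places $e_{21}\in W$, reducing to the previous case.

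It remains to produce the diagonal units, and this is the step where the hypothesis $|\mathbb{F}|\geq4$ is genuinely needed, so I expect it to be the only real obstacle. With all off-diagonal units already in $W$, I would invoke~\eqref{eii}: taking $g=I+\alpha e_{21}$ ($\alpha\neq0$) yields $e_{12}\ast g+e_{12}=\alpha e_{22}+\alpha^{2}e_{11}+\alpha^{3}e_{21}$, and subtracting the known multiple of $e_{21}$ shows $e_{22}+\alpha e_{11}\in W$ for every nonzero $\alpha$. Since $|\mathbb{F}|\geq4$ supplies two distinct nonzero scalars $\alpha_1\neq\alpha_2$, subtracting the corresponding relations gives $(\alpha_1-\alpha_2)e_{11}\in W$, whence $e_{11}\in W$ and then $e_{22}\in W$; permutations deliver all $e_{ii}$. (Over $\mathbb{F}_2$ there is no second value of $\alpha$, and one genuinely cannot separate the diagonal units this way, which is exactly why the small field is excluded.) Combining the two parts gives $W=\Gamma V$, establishing irreducibility.
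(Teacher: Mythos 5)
Your proof is correct and follows essentially the same route as the paper: the $e\&f$ operators built from $\mathbb{F}_2$-transvections to isolate off-diagonal units, and the identity $e_{12}\ast(I+\alpha e_{21})+e_{12}=\alpha e_{22}+\alpha^{2}e_{11}+\alpha^{3}e_{21}$ together with two distinct nonzero values of $\alpha$ (whence $|\mathbb{F}|\geq 4$) to separate the diagonal units. Your treatment of a starting element that is purely diagonal but not a matrix unit, via $\varphi\ast(I+\alpha e_{21})+\varphi=(\alpha\varphi_{11}+\alpha^{2}\varphi_{22})e_{21}$, is in fact slightly more complete than the paper's text, which only addresses the case where $e_{11}$ itself already lies in $W$.
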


\section{Transvection degenerations}

\label{SectTransvec}

In this section we use transvections to examine linear degenerations. We
shall work with both the algebras and their structure vectors. A typical
algebra is $\mathfrak{g}$, with structure vector $\boldsymbol{\lambda }%
=\Theta (\mathfrak{g})$. Let $g$ be the transvection $g:v\mapsto v+\zeta
(v)z $, where $\zeta $ is a nonzero linear functional on $V$ with $\zeta
(z)=0$. Then let $\mathfrak{g}_{1}=\mathfrak{g}g$, so that the product in $%
\mathfrak{g}_{1}$ is given by%
\begin{eqnarray*}
\lbrack u,v]_{1} &=&g^{-1}[gu,gv] \\
&=&g^{-1}[u+\zeta (u)z,v+\zeta (v)z] \\
&=&g^{-1}([u,v]+\zeta (u)[z,v]+\zeta (v)[u,z]+\zeta (u)\zeta (v)[z,z]) \\
&=&[u,v]+\zeta (u)[z,v]+\zeta (v)[u,z]+\zeta (u)\zeta (v)[z,z] \\
&&-(\zeta ([u,v])+\zeta (u)\zeta ([z,v])+\zeta (v)\zeta ([u,z])+\zeta
(u)\zeta (v)\zeta ([z,z]))z.
\end{eqnarray*}%
(We shall use this kind of indexing in what follows.) Then $\Theta(\mathfrak{%
g}_{2})$, where %The difference
$\mathfrak{g}_{2}=\mathfrak{g}_{1}-\mathfrak{g}$, is in $\boldsymbol{\lambda
}(\mathbb{F}G\mathbb{)}$. If we apply the same computation using $\alpha
\zeta $ in place of $\zeta $ ($\alpha \neq 0)$ to get $\mathfrak{g}_{3}$,
and then take $\mathfrak{g}_{4}=\mathfrak{g}_{3}-\alpha \mathfrak{g}_{2}$,
we end up with%
\begin{eqnarray*}
\lbrack u,v]_{4} &=&(\alpha ^{2}-\alpha )\zeta (u)\zeta (v)[z,z]-(\alpha
^{2}-\alpha )\zeta (u)\zeta ([z,v])z \\
&&-(\alpha ^{2}-\alpha )\zeta (v)\zeta (u,z]))z-(\alpha ^{3}-\alpha )\zeta
(u)\zeta (v)\zeta ([z,z])z.
\end{eqnarray*}%
Assuming that $\left\vert \mathbb{F}\right\vert >2$, we can take $\alpha
\neq 1$, scale by dividing by $-(\alpha ^{2}-\alpha )$, and conclude that $%
\Theta(\mathfrak{g}_{5})$, where $\mathfrak{g}_{5}$ has product%
\begin{eqnarray}
\lbrack u,v]_{5} &=&\zeta (u)\zeta ([z,v])z+\zeta (v)\zeta ([u,z]))z  \notag
\\
&&-\zeta (u)\zeta (v)[z,z]-(\alpha +1)\zeta (u)\zeta (v)\zeta ([z,z])z
\label{bracket5}
\end{eqnarray}%
is in $\boldsymbol{\lambda }(\mathbb{F}G\mathbb{)}$. That is, $\boldsymbol{%
\lambda }$ linearly degenerates to $\Theta(\mathfrak{g}_{5})$. We shall use
this degeneration in two cases. Therefore, for the rest of this section we
assume that $|\mathbb{F}|>2$.

\subsection{$\boldsymbol{\protect\lambda }\in \mathcal{M}^{\ast\ast}$}

Let $\boldsymbol{\lambda }\in \mathcal{M}^{\ast \ast }$, and let $\mathfrak{g%
}=\Theta ^{-1}(\boldsymbol{\lambda })$, with product $[\,,\,]$. Then, as
above, we obtain an algebra $\mathfrak{g}_{5}$ with $\Theta (\mathfrak{g}%
_{5})\in \boldsymbol{\lambda }\mathbb{F}G$ whose product is%
\begin{eqnarray*}
\lbrack u,v]_{5} &=&\zeta (u)\zeta ([z,v])z+\zeta (v)\zeta ([u,z])z-\zeta
(u)\zeta (v)[z,z] \\
&&-(\alpha +1)\zeta (u)\zeta (v)\zeta ([z,z])z.
\end{eqnarray*}%
Here $\zeta $ is a nonzero linear functional on $V$ which is $0$ on the
chosen vector $z\neq 0$. (The transvection used for the linear degeneration
is $v\mapsto v+\zeta (v)z$.) The nonzero scalar $\alpha $ is also not $1$.
Now assume that $\boldsymbol{\lambda }\notin \mathcal{M}^{\ast }$. Then
there are two vectors $a$ and $b$ for which $a,b$, and $[a,b]$ are
independent. Let $\omega $ be the square factor function for $\mathfrak{g}$,
and let $z$ be a nonzero member of $\mathbb{F}$-$\mathrm{sp}(a,b)$ for which
$\omega (z)=0$. Then let $w$ be such that $\mathbb{F}$-$\mathrm{sp}(a,b)=%
\mathbb{F}$-$\mathrm{sp}(z,w)$. The triple $z,w,[z,w]$ is also independent.
Choose $\zeta $ so that not only is $\zeta (z)=0$, but also $\zeta (w)=0$
and $\zeta ([z,w])=1$. We have%
\begin{equation*}
\lbrack z,v]+[v,z]=\omega (v)z+\omega (z)v=\omega (v)z,
\end{equation*}%
by (\ref{comm}). Then%
\begin{equation*}
\zeta ([z,v])+\zeta ([v,z])=\omega (v)\zeta (z)=0,
\end{equation*}%
so that $\zeta ([v,z])=-\zeta ([z,v])$. Define $\zeta ^{\prime }$ by $\zeta
^{\prime }(v)=\zeta ([z,v]$. Then $\zeta $ and $\zeta ^{\prime }$ are
independent, since both are nonzero and $\zeta ^{\prime }(w)=\zeta ([z,w])=1$
but $\zeta (w)=0$. With these arrangements,%
\begin{equation}
\lbrack u,v]_{5}=(\zeta (u)\zeta ^{\prime }(v)-\zeta (v)\zeta ^{\prime
}(u))z.  \label{bracket5simp}
\end{equation}%
The expression $\varphi (u,v)=\zeta (u)\zeta ^{\prime }(v)-\zeta (v)\zeta
^{\prime }(u)$ is a symplectic form of rank 2, and $z$ is in its radical.
(See, for example,~\cite{Grove2001} for background on bilinear forms.) Set
up a basis $u_{1}$, \ldots , $u_{n}$ of $V$ with $\varphi (u_{1},u_{2})=1$,
the radical of $\varphi $ spanned by $u_{3},\ldots ,u_{n}$, and $u_{3}=z$.
Now let $\boldsymbol{\mu }_{5}$ be the structure vector of $\mathfrak{g}_{5}$
relative to the basis $u_{1}$, \ldots , $u_{n}$. Then the nonzero components
$\mu _{ijk}$ of $\boldsymbol{\mu }_{5}$ must have $k=3$. Since $\varphi $ is
symplectic, these nonzero constants are just $\mu _{123}=1$ and $\mu
_{213}=-1$. But this means that $\boldsymbol{\mu }_{5}=\boldsymbol{\eta }$.
Since $\boldsymbol{\eta }$ is in the $G$-orbit of $\Theta (\mathfrak{g}_{5})$%
, we get that $\boldsymbol{\eta }\in \boldsymbol{\lambda }\mathbb{F}G$. As $%
\boldsymbol{\eta }\mathbb{F}G=\mathcal{U}$ (see Remark~\ref{RemUeta}), we
have:

\begin{proposition}
\label{PropLinDegEta} Let $|\mathbb{F}|>2$. Suppose further that $%
\boldsymbol{\lambda }\in \mathcal{M}^{\ast \ast }$ but $\boldsymbol{\lambda }%
\notin \mathcal{M}^{\ast }$. Then $\boldsymbol{\lambda }\looparrowright%
\boldsymbol{\eta}$, so $\mathcal{U}\subseteq \boldsymbol{\lambda }\mathbb{F}%
G $. Moreover, $\mathcal{U}/\mathcal{U}\cap \mathcal{M}^{\ast }$ is
irreducible.
\end{proposition}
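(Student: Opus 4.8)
The plan is to obtain the first two assertions directly from the transvection degeneration developed at the start of Section~\ref{SectTransvec}, and then to deduce the irreducibility formally. Since $\boldsymbol{\lambda} \in \mathcal{M}^{\ast\ast}$ and $|\mathbb{F}| > 2$, that construction yields an algebra $\mathfrak{g}_5$ with $\Theta(\mathfrak{g}_5) \in \boldsymbol{\lambda}\mathbb{F}G$ and product given by~\eqref{bracket5}, for any nonzero $z$, any linear functional $\zeta$ vanishing on $z$, and any $\alpha \in \mathbb{F} - \{0,1\}$. The task is then to choose these data so that $\Theta(\mathfrak{g}_5)$ lands in the $G$-orbit of $\boldsymbol{\eta}$.

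Here is where the hypothesis $\boldsymbol{\lambda} \notin \mathcal{M}^{\ast}$ enters. I would pick $a, b \in V$ with $a, b, [a,b]$ linearly independent, then choose $z \ne 0$ in $\mathbb{F}\text{-}\mathrm{sp}(a,b)$ with $\omega(z) = 0$ (possible since the square factor function $\omega$ is a linear functional by Proposition~\ref{PropDefCondM**}), complete $z$ to a basis $z, w$ of $\mathbb{F}\text{-}\mathrm{sp}(a,b)$, and select $\zeta$ with $\zeta(z) = \zeta(w) = 0$ and $\zeta([z,w]) = 1$. The relation~\eqref{comm} gives $[z,v] + [v,z] = \omega(v)z$, so applying $\zeta$ and using $\zeta(z) = 0$ yields $\zeta([v,z]) = -\zeta([z,v])$; this collapses~\eqref{bracket5} to~\eqref{bracket5simp}, namely $[u,v]_5 = (\zeta(u)\zeta'(v) - \zeta(v)\zeta'(u))z$ with $\zeta'(v) = \zeta([z,v])$. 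Since $\zeta'(w) = 1$ while $\zeta(w) = 0$, the functionals $\zeta, \zeta'$ are independent, so $\varphi(u,v) = \zeta(u)\zeta'(v) - \zeta(v)\zeta'(u)$ is a symplectic form of rank $2$ with $z$ in its radical. Choosing a symplectic basis $u_1, \ldots, u_n$ with $\varphi(u_1,u_2) = 1$, radical spanned by $u_3, \ldots, u_n$, and $u_3 = z$, the structure vector of $\mathfrak{g}_5$ in this basis has only the entries $\mu_{123} = 1$, $\mu_{213} = -1$, i.e. it equals $\boldsymbol{\eta}$. As $\boldsymbol{\eta}$ is then in the $G$-orbit of $\Theta(\mathfrak{g}_5) \in \boldsymbol{\lambda}\mathbb{F}G$, I conclude $\boldsymbol{\lambda} \looparrowright \boldsymbol{\eta}$, whence $\mathcal{U} = \boldsymbol{\eta}\mathbb{F}G \subseteq \boldsymbol{\lambda}\mathbb{F}G$ by Remark~\ref{RemUeta}.

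For the final claim, I would first note the inclusion $\mathcal{U} \subseteq \mathcal{M}^{\ast\ast}$: since $\mathcal{U} = \mathcal{K} \cap \mathcal{T} \subseteq \mathcal{K}$ and every $\boldsymbol{\lambda} \in \mathcal{K}$ has $[v,v] = 0 \in \mathbb{F}\text{-}\mathrm{sp}(v)$, such $\boldsymbol{\lambda}$ lies in $\mathcal{M}^{\ast\ast}$. Hence any $\boldsymbol{\lambda} \in \mathcal{U} - \mathcal{M}^{\ast}$ meets the hypotheses already established, giving $\mathcal{U} \subseteq \boldsymbol{\lambda}\mathbb{F}G$; together with $\boldsymbol{\lambda}\mathbb{F}G \subseteq \mathcal{U}$ this forces $\boldsymbol{\lambda}\mathbb{F}G = \mathcal{U}$. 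Thus every nonzero coset in $\mathcal{U}/\mathcal{U}\cap\mathcal{M}^{\ast}$ generates the whole quotient, which is therefore irreducible. The one genuinely delicate step is the simultaneous choice of $z$, $w$, and $\zeta$ forcing $\varphi$ to be symplectic of rank exactly $2$ with $z$ radical; once that is in place, the identification with $\boldsymbol{\eta}$ and the module bookkeeping are routine.
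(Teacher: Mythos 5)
Your proposal is correct and follows essentially the same route as the paper: the same transvection degeneration producing $\mathfrak{g}_5$, the same choice of $z$, $w$, $\zeta$ leading to the rank-two symplectic form and the identification of the resulting structure vector with $\boldsymbol{\eta}$, and the same formal deduction of irreducibility of $\mathcal{U}/\mathcal{U}\cap\mathcal{M}^{\ast}$ via $\boldsymbol{\lambda}(\mathbb{F}G)=\mathcal{U}$ for $\boldsymbol{\lambda}\in\mathcal{U}-\mathcal{M}^{\ast}$. The only detail worth making explicit (which the paper also only asserts) is that $z,w,[z,w]$ are independent, which is what guarantees the existence of $\zeta$ with $\zeta(z)=\zeta(w)=0$ and $\zeta([z,w])=1$.
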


We remark in passing that in the special case $\boldsymbol{\lambda }\in%
\mathcal{K}-\mathcal{M}^{\ast}$, the above argument can be simplified. For
such $\boldsymbol{\lambda }$, $[z,z]=0$ and $[u,z]=-[z,u]$. Moreover, there
is a pair $z,w $ with $[z,w]\notin \mathbb{F}$-$\mathrm{sp}(z,w)$. Defining $%
\zeta^{\prime }$ by $\zeta^{\prime }(v)=\zeta ([z,v])$, we see that~%
\eqref{bracket5simp} immediately follows from~\eqref{bracket5}.

\subsection{$\boldsymbol{\protect\lambda}\in \mathcal{C}$}

Let $\boldsymbol{\lambda }\in \mathcal{C}$. Now $[u,z]=[z,u]$. Again we put $%
\zeta ^{\prime }(v)=\zeta ([z,v])$, so that \eqref{bracket5} becomes%
\begin{eqnarray*}
\lbrack u,v]_{5} &=&(\zeta (u)\zeta ^{\prime }(v)+\zeta (v)\zeta ^{\prime
}(u))z \\
&&-\zeta (u)\zeta (v)[z,z]-(\alpha +1)\zeta (u)\zeta (v)\zeta ([z,z])z.
\end{eqnarray*}%
If $\left\vert \mathbb{F}\right\vert >3$, we take $\alpha ^{\prime }\neq
0,1,\alpha $, set up $\mathfrak{g}_{5}^{\prime }$ using $\alpha ^{\prime }$,
take $\mathfrak{g}_{5}^{\prime }-\mathfrak{g}_{5}$, divide by $\alpha
-\alpha ^{\prime }$, and end up with $\mathfrak{g}_{6}$ for which%
\begin{equation*}
\lbrack u,v]_{6}=\zeta (u)\zeta (v)\zeta ([z,z])z.
\end{equation*}%
Assume that $\boldsymbol{\lambda }\notin \mathcal{M}^{\ast \ast }$, so that
for some $z$, $[z,z]=w$ and $z$ are independent. Then we may set $\zeta
(w)=1 $ and have simply%
\begin{equation*}
\lbrack u,v]_{6}=\zeta (u)\zeta (v)z.
\end{equation*}%
Moreover, $V=\mathbb{F}$-$\mathrm{sp}(w)+\ker \zeta $, a direct sum; $z\in
\ker \zeta $. If $v\in \ker \zeta $, then $[u,v]_{6}=0$ for all $u\in V$.
Setting up a basis $u_{1}$, \ldots , $u_{n}$ of $V$ with $u_{1}=w=[z,z]$, $%
u_{2}=z$, and $\ker \zeta $ spanned by $u_{2}$, $u_{3}$, \ldots , $u_{n}$,
we see that $\boldsymbol{\delta }\,(=\mathbf{112})$ belongs to the $G$-orbit
of $\Theta (\mathfrak{g}_{6})$.
Since $\mathcal{N}=\boldsymbol{\delta }(\mathbb{F}G)$ by Proposition~\ref%
{PropNdelta}, we see that $\mathcal{N}\subseteq \boldsymbol{\lambda }(%
\mathbb{F}G)$. In particular, $\mathcal{N}/\mathcal{N}\cap \mathcal{M}^{\ast
\ast }$ is irreducible.

Now suppose that $\mathbb{F}=\mathbb{F}_{3}$. Then the only choice for $%
\alpha $ is $2=-1$, and%
\begin{equation*}
\lbrack u,v]_{5}=(\zeta (u)\zeta ^{\prime }(v)+\zeta (v)\zeta ^{\prime
}(u))z-\zeta (u)\zeta (v)w.
\end{equation*}%
Let $\varphi $ be the bilinear form given by $\varphi (u,v)=\zeta (u)\zeta
^{\prime }(v)+\zeta (v)\zeta ^{\prime }(u)$. Then since $\zeta ^{\prime
}(z)=\zeta ([z,z])=\zeta (w)=1$, we get $\varphi (z,z)=0,\varphi (z,w)=1$,
and $\varphi (w,w)=-\zeta ^{\prime }(w)$. Thus on $\mathbb{F}$-$\mathrm{sp}%
(z,w)$, $\varphi $ is nonsingular. Moreover, the radical of $\varphi $ is $%
R=\ker \zeta \cap \ker \zeta ^{\prime }$. For $z$ and $w$, we have%
\begin{eqnarray*}
\lbrack z,z]_{5} &=&0, \\
\lbrack z,w]_{5} &=&\varphi (z,w)z-\zeta (z)\zeta (w)w=z \\
\lbrack w,w]_{5} &=&-\zeta ^{\prime }(w)z-w.
\end{eqnarray*}%
With $u_{1}=z$, $u_{2}=w$, and $R$ spanned by $u_{3},\ldots ,u_{n}$, we have
that $\boldsymbol{\lambda }_{5}$ belongs to the $G$-orbit of $\boldsymbol{%
\mu }_{5}$, where
\begin{equation*}
\boldsymbol{\mu }_{5}=\mathbf{121}+\mathbf{211}-{\zeta ^{\prime }}(w)\mathbf{%
221}-\mathbf{222}.
\end{equation*}%
If $\zeta ^{\prime }(w)\neq 0$, we can use the transformation $u_{1}\mapsto
-u_{1}$, $u_{i}\mapsto u_{i}$, for $i>1$, to change $\boldsymbol{\mu }_{5}$
to%
\begin{equation*}
\boldsymbol{\mu }_{5}^{\prime }=\mathbf{121}+\mathbf{211+\zeta ^{\prime }}(w)%
\mathbf{221}-\mathbf{222}.
\end{equation*}%
Then $\boldsymbol{\mu }_{5}^{\prime }-\boldsymbol{\mu }_{5}$ scales to $%
\mathbf{221}$. A permutation gives $\mathbf{112}=\boldsymbol{\delta }$
again, and once more $\mathcal{N}\subseteq \boldsymbol{\lambda }(\mathbb{F}%
_{3}G)$.

Finally, suppose that $\zeta ^{\prime }(w)=0$, so that%
\begin{equation*}
\boldsymbol{\mu }_{5}=\mathbf{121}+\mathbf{211}-\mathbf{222}.
\end{equation*}%
Take $g\in G$ with
\begin{equation*}
\lbrack g]=%
\begin{bmatrix}
1 & 0 & 0 & 0 \\
0 & 1 & 0 & 0 \\
0 & 1 & 1 & 0 \\
0 & 0 & 0 & I_{n-3}%
\end{bmatrix}%
,\quad \lbrack g^{-1}]=%
\begin{bmatrix}
1 & 0 & 0 & 0 \\
0 & 1 & 0 & 0 \\
0 & -1 & 1 & 0 \\
0 & 0 & 0 & I_{n-3}%
\end{bmatrix}%
.
\end{equation*}%
Then $\boldsymbol{\mu }_{5}g=\mathbf{121}+\mathbf{211}-\mathbf{222}+\mathbf{%
223}$. Thus $\boldsymbol{\mu }_{5}g-\boldsymbol{\mu }_{5}=\mathbf{223}$, and
a permutation again gets us to $\boldsymbol{\delta }$ and the conclusion
that $\mathcal{N}\subseteq \boldsymbol{\lambda }(\mathbb{F}_{3}G)$. \medskip

Summing up,

\begin{proposition}
\label{PropLinDegDelta} Let $|\mathbb{F}|>2$. Suppose further that $%
\boldsymbol{\lambda }\in \mathcal{C}$ but $\boldsymbol{\lambda }\notin
\mathcal{M}^{\ast\ast }$. Then $\boldsymbol{\lambda }\looparrowright%
\boldsymbol{\delta}$, so $\mathcal{N}\subseteq \boldsymbol{\lambda }\mathbb{F%
}G$. Moreover, $\mathcal{N}/\mathcal{N}\cap\mathcal{M}^{\ast\ast} $ is
irreducible.
\end{proposition}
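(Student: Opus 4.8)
The plan is to reduce everything to the single containment $\boldsymbol{\delta}\in\boldsymbol{\lambda}(\mathbb{F}G)$: once that is in hand, Proposition~\ref{PropNdelta} (which asserts $\mathcal{N}=\boldsymbol{\delta}(\mathbb{F}G)$ when $|\mathbb{F}|>2$) upgrades it immediately to $\mathcal{N}=\boldsymbol{\delta}(\mathbb{F}G)\subseteq\boldsymbol{\lambda}(\mathbb{F}G)$, which is both $\boldsymbol{\lambda}\looparrowright\boldsymbol{\delta}$ and the asserted inclusion. To produce $\boldsymbol{\delta}$ I would feed the hypothesis $\boldsymbol{\lambda}\in\mathcal{C}$ into the transvection machinery summarized in~\eqref{bracket5}. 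Commutativity gives $[u,z]=[z,u]$, so on setting $\zeta'(v)=\zeta([z,v])$ the product~\eqref{bracket5} of $\mathfrak{g}_5$ takes the shape $[u,v]_5=(\zeta(u)\zeta'(v)+\zeta(v)\zeta'(u))z-\zeta(u)\zeta(v)[z,z]-(\alpha+1)\zeta(u)\zeta(v)\zeta([z,z])z$; the entire task is then to isolate the monomial $\zeta(u)\zeta(v)z$ and recognize it, in a suitable basis, as $\mathbf{112}=\boldsymbol{\delta}$.

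When $|\mathbb{F}|>3$ this isolation is clean. Only the last term above depends on the scalar $\alpha$, so running the same degeneration with a second scalar $\alpha'\ne0,1,\alpha$ and subtracting kills the two $\alpha$-independent terms, leaving an algebra $\mathfrak{g}_6$ with $[u,v]_6=\zeta(u)\zeta(v)\zeta([z,z])z$. Here the hypothesis $\boldsymbol{\lambda}\notin\mathcal{M}^{\ast\ast}$ is exactly what supplies a vector $z$ with $w:=[z,z]$ independent of $z$; I would normalize $\zeta(z)=0$, $\zeta(w)=1$, so that $[u,v]_6=\zeta(u)\zeta(v)z$. Choosing a basis $u_1,\dots,u_n$ with $u_1=w$, $u_2=z$, and $\ker\zeta=\mathbb{F}\text{-sp}(u_2,\dots,u_n)$ then exhibits $\boldsymbol{\delta}$ in the $G$-orbit of $\Theta(\mathfrak{g}_6)$, giving $\boldsymbol{\delta}\in\boldsymbol{\lambda}(\mathbb{F}G)$.

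The case $\mathbb{F}=\mathbb{F}_3$ is where I expect the real work, since the two-scalar trick collapses: the only admissible $\alpha$ is $2=-1$, so instead $[u,v]_5=\varphi(u,v)z-\zeta(u)\zeta(v)w$ with $\varphi(u,v)=\zeta(u)\zeta'(v)+\zeta(v)\zeta'(u)$ a \emph{symmetric} form, nondegenerate on $\mathbb{F}\text{-sp}(z,w)$. I would evaluate $[z,z]_5,[z,w]_5,[w,w]_5$ directly, pass to a basis with $u_1=z$, $u_2=w$ and $\mathrm{rad}\,\varphi$ spanning the rest, and reach the normal form $\boldsymbol{\mu}_5=\mathbf{121}+\mathbf{211}-\zeta'(w)\mathbf{221}-\mathbf{222}$. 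The two subcases must be cleared separately: when $\zeta'(w)\ne0$, the sign change $u_1\mapsto-u_1$ together with a subtraction scales down to $\mathbf{221}$; when $\zeta'(w)=0$, an explicit unipotent $g$ adding coordinate $2$ into coordinate $3$ produces $\mathbf{223}$. In either subcase a permutation of the basis delivers $\boldsymbol{\delta}$, so again $\boldsymbol{\delta}\in\boldsymbol{\lambda}(\mathbb{F}_3G)$.

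For the irreducibility of $\mathcal{N}/(\mathcal{N}\cap\mathcal{M}^{\ast\ast})$ I would use that $\mathcal{N}\subseteq\mathcal{C}$: any nonzero coset has a representative $\boldsymbol{\lambda}\in\mathcal{N}-\mathcal{M}^{\ast\ast}$, which then satisfies the hypotheses of the proposition, so $\mathcal{N}\subseteq\boldsymbol{\lambda}(\mathbb{F}G)$; combined with $\boldsymbol{\lambda}(\mathbb{F}G)\subseteq\mathcal{N}$ (as $\boldsymbol{\lambda}\in\mathcal{N}$ and $\mathcal{N}$ is a $G$-submodule) this forces $\boldsymbol{\lambda}(\mathbb{F}G)=\mathcal{N}$, so every nonzero coset generates the whole quotient. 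The main obstacle is genuinely the $\mathbb{F}_3$ computation: the uniform scalar-elimination argument that settles all larger fields breaks down there and has to be replaced by the case-by-case normal-form analysis above.
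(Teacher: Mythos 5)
Your proposal is correct and follows essentially the same route as the paper: the commutativity of $[\,,\,]$ feeds into the transvection degeneration~\eqref{bracket5}, the two-scalar subtraction isolates $\zeta(u)\zeta(v)\zeta([z,z])z$ when $|\mathbb{F}|>3$, and the $\mathbb{F}_3$ case is handled by the same normal form $\mathbf{121}+\mathbf{211}-\zeta'(w)\mathbf{221}-\mathbf{222}$ with the same two subcases, after which Proposition~\ref{PropNdelta} yields $\mathcal{N}\subseteq\boldsymbol{\lambda}(\mathbb{F}G)$ and the irreducibility of $\mathcal{N}/\mathcal{N}\cap\mathcal{M}^{\ast\ast}$ exactly as in the paper.
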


\section{$GL(V)$-structure of $\boldsymbol{\Lambda }$}

\label{SectGLVstructure}

We assume that $|\mathbb{F}|>2$ throughout this section. Recall that $n$ is
a positive integer with $n\geq 3$. Below, we will use the convention that ``$%
G$-submodule" means ``non-zero proper $G$-submodule".

\subsection{The composition series of $\mathcal{K}$ and $\mathcal{C}$}

\label{SubsecCompSeriesKC}

In~\cite[Section~4.1]{IvanovaPallikaros2019}, under the assumption that $%
\mathbb{F}$ is infinite, all composition series of $\mathcal{K}$ were
obtained in the case $\mathop{\rm char}\nolimits\mathbb{F}\nmid n-1$ and, in
addition, it was shown that in the case \mbox{$\Char\F\mid n-1$} every
composition series for $\mathcal{K}$ begins with $0\subset\mathcal{M}%
^\ast_{(1,-1)}\subset\mathcal{U}$. The techniques used in~\cite%
{IvanovaPallikaros2019} involve the notion of degeneration. In this
subsection we extend these results using linear degeneration and tools like
the adjoint trace form, thus obtaining all the composition series of $%
\mathcal{K}$ for $|\mathbb{F}|>2$. Moreover, we obtain analogous results for
the submodule $\mathcal{C}$. In view of the discussion in Section~\ref%
{SecSubmodulesCK}, this would then provide sufficient information for
determining all the composition factors (with their multiplicities)
occurring in a composition series for~$\boldsymbol{\Lambda}$.

\medskip

We begin by determining all $G$-submodules of $\mathcal{K}$. Let $\mathcal{S}
$ be a $G$-submodule of $\mathcal{K}$ which is not contained in $\mathcal{K}%
\cap \mathcal{M}^{\ast }$. Recall that $\mathcal{K}\cap \mathcal{M}^{\ast }=%
\mathcal{M}_{(1,-1)}^{\ast }$ by Proposition~\ref{PropIntersecM*}. Then, for
any $\boldsymbol{\lambda }\in \mathcal{S}-\mathcal{M}^{\ast }$ we have, by
Proposition~\ref{PropLinDegEta}, that $\boldsymbol{\lambda }\looparrowright
\boldsymbol{\eta }$. Hence $\boldsymbol{\eta }(\mathbb{F}G)\subseteq
\boldsymbol{\lambda }(\mathbb{F}G)\subseteq \mathcal{S}$. Now $\boldsymbol{%
\eta }(\mathbb{F}G)=\mathcal{U}$, and $\mathcal{U}$ is a maximal $G$%
-submodule of $\mathcal{K}$ since $\mathcal{K}/\mathcal{U}$, which is $G$%
-isomorphic to $\widehat{V}$ by Proposition~\ref{PropIsomDualV} (using the
trace form), is irreducible as a $G$-module. It follows that $\mathcal{S}=%
\mathcal{U}$. Hence, $\mathcal{U}$ is the only $G$-submodule of $\mathcal{K}$
which is not contained in $\mathcal{M}^{\ast }$, and since $\mathcal{K}\cap
\mathcal{M}^{\ast }\,(=\mathcal{M}_{(1,-1)}^{\ast })$ is irreducible as a $G$%
-module (see Section~\ref{SecM*}), we conclude that $\mathcal{U}$ and $%
\mathcal{M}_{(1,-1)}^{\ast }$ are the only $G$-submodules of $\mathcal{K}$.
Invoking Proposition~\ref{PropIntersecM*} we get that $\mathcal{U}\cap
\mathcal{M}_{(1,-1)}^{\ast }=0$ (resp., $\mathcal{M}_{(1,-1)}^{\ast }\subset
\mathcal{U}$) if $\mathop{\rm char}\nolimits\mathbb{F}\nmid n-1$ (resp., $%
\mathop{\rm char}\nolimits\mathbb{F}\mid n-1$).
So,

\begin{itemize}
\item If $\mathrm{char}\mathbb{F}\nmid n-1$, then $\mathcal{K}=\mathcal{U}%
\oplus \mathcal{M}_{(1,-1)}^{\ast }$ as a direct sum of irreducible $G$%
-modules (in particular $\mathcal{K}$ has precisely two composition series).

\item If $\mathrm{char}\mathbb{F}\mid n-1$, then we have the unique
composition series $0\subset \mathcal{M}_{(1,-1)}^{\ast }\subset \mathcal{U}%
\subset \mathcal{K}$.
\end{itemize}

Note that the above results are also in line with Proposition~\ref%
{PropLinDegEta} that $\mathcal{U}/\mathcal{U}\cap\mathcal{M}^\ast$ is an
irreducible $G$-module.

\medskip

Our next aim is to determine all $G$-submodules of $\mathcal{C}$. For this,
we let $\mathcal{S}$ be a $G$-submodule of $\mathcal{C}$ which is not
contained in $\mathcal{C}\cap \mathcal{M}^{\ast \ast }$. Then, for any $%
\boldsymbol{\lambda }\in \mathcal{S}-\mathcal{M}^{\ast \ast }$ we have, by
Proposition~\ref{PropLinDegDelta}, that $\boldsymbol{\lambda }%
\looparrowright \boldsymbol{\delta }$. Now $\mathcal{N}=\boldsymbol{\delta }%
\mathbb{F}G$ by Proposition~\ref{PropNdelta}, so $\mathcal{N}\subseteq
\boldsymbol{\lambda }(\mathbb{F}G)\subseteq \mathcal{S}$. But $\mathcal{N}$
is a maximal $G$-submodule of $\mathcal{C}$ since $\mathcal{C}/\mathcal{N}$
is irreducible as a $G$-module (see Proposition~\ref{PropIsomDualV}). Hence $%
\mathcal{S}=\mathcal{N}$. We conclude that the only $G$-submodule of $%
\mathcal{C}$ which is not contained in $\mathcal{C}\cap \mathcal{M}^{\ast
\ast }$ is $\mathcal{N}$.

\medskip

We consider the case $\mathop{\rm char}\nolimits\mathbb{F}\ne2$ first. Then,
by Remark~\ref{RemCM**}(ii), $%
\mathcal{C}\cap\mathcal{M}^{\ast\ast}=\mathcal{C}\cap\mathcal{M}^{\ast}=\mathcal{M}^{\ast}_{(1,1)}$.
Recalling
that $\mathcal{M}^{\ast}_{(1,1)}$ is irreducible, we get that $\mathcal{N}$
and $\mathcal{M}^{\ast}_{(1,1)}$ are the only $G$-submodules of $\mathcal{C}$%
. Finally, invoking Proposition~\ref{PropIntersecM*}, we get

\begin{itemize}
\item If $\mathrm{char}\mathbb{F}\nmid n+1$, then $\mathcal{N}\cap \mathcal{M%
}^{\ast \ast }=0$ and $\mathcal{C}=\mathcal{N}\oplus \mathcal{M}%
_{(1,1)}^{\ast }$, again a direct sum of irreducible $G$-modules (in
particular $\mathcal{C}$ has precisely two composition series).

\item If $\mathrm{char}\mathbb{F}\mid n+1$, then we have the unique
composition series $0\subset \mathcal{M}_{(1,1)}^{\ast }\subset \mathcal{N}%
\subset \mathcal{C}$.
\end{itemize}

Observe that the above results agree with Proposition~\ref{PropLinDegDelta}
that $\mathcal{N}/\mathcal{N}\cap\mathcal{M}^{\ast\ast}$ is irreducible.

\medskip

Suppose now that $\mathop{\rm char}\nolimits\mathbb{F}=2$. Then $\mathcal{K}%
\subset\mathcal{C}$ and $\mathcal{C}\cap\mathcal{M}^{\ast\ast}=\mathcal{K}$
(see Remark~\ref{RemCM**}(i)). So the situation now is that $\mathcal{N}$ is
the only $G$-submodule of $\mathcal{C}$ which is not contained in $\mathcal{K%
}$ (and we have already determined all $G$-submodules of $\mathcal{K}$
whenever $|\mathbb{F}|>2$). We conclude that, in characteristic 2, the $G$%
-submodules $\mathcal{N}$, $\mathcal{M}^\ast_{(1,1)}\,(=\mathcal{M}%
^\ast_{(1,-1)})$, $\mathcal{U}$ and $\mathcal{K}$ form a complete list of $G$%
-submodules for $\mathcal{C}$. Moreover, we have the ``diamond'' %
\begin{equation*}
\begin{tabular}{lllll}
&  & $\mathcal{C}$ &  &  \\
& $\diagup $ &  & $\diagdown $ &  \\
$\mathcal{N}$ &  &  &  & $\mathcal{K}$ \\
& $\diagdown $ &  & $\diagup $ &  \\
&  & $\mathcal{U}$ &  &
\end{tabular}%
\end{equation*}%
with $\mathcal{C}/\mathcal{N}$ and $\mathcal{K}/\mathcal{U}$ both $G$%
-isomorphic to $\widehat{V}$. In the other branch, $\mathcal{C}/\mathcal{K}$
is isomorphic to the $G$-module $\Gamma V$, which we considered in Section~%
\ref{SectChar2}.
Since $|\mathbb{F}|>{2}$, $\Gamma V$ is irreducible by Proposition~\ref%
{PropGammaV}.

\begin{remark}
\label{Rem10.1} Suppose that $\mathop{\rm char}\nolimits\mathbb{F}=2$.

(i) Since $\mathcal{M}^\ast_{(1,1)}$ and $\mathcal{U}$ are both contained in
$\mathcal{K}$, the above discussion ensures that $\mathcal{K}$ is the only
other maximal $G$-submodule of $\mathcal{C}$ apart from $\mathcal{N}$. This
provides an alternative (indirect) way of establishing that $\mathcal{C}/%
\mathcal{K}$ (and hence $\mathcal{N}/\mathcal{U}$ also) is an irreducible $G$%
-module (compare Proposition~\ref{PropGammaV}).

(ii) The following can also be deduced from the discussion preceding this
remark: If $n$ is odd, then $\mathcal{C}$ has precisely two$\mathcal{\ }$%
composition series, namely $0\subset \mathcal{M}_{(1,1)}^{\ast }\subset
\mathcal{U}\subset \mathcal{K}\subset \mathcal{C}$ and $0\subset \mathcal{M}%
_{(1,1)}^{\ast }\subset \mathcal{U}\subset \mathcal{N}\subset \mathcal{C}$.
If $n$ is even, then $\mathcal{C}$ has precisely three$\mathcal{\ }$%
composition series, two of them obtained by refining the first factor of the
filtration $0\subset \mathcal{K}\subset \mathcal{C}$ (which is a direct sum
of two irreducible $G$-modules, as we have seen), the third one being $%
0\subset \mathcal{U}\subset \mathcal{N}\subset \mathcal{C}$.
\end{remark}

For the rest of the paper we will concentrate on the filtration $0\subset%
\mathcal{M}^\ast\subset\mathcal{M}^{\ast\ast}\subset\boldsymbol{\Lambda}$
and discuss possible ways of refining this filtration to a composition
series for $\boldsymbol{\Lambda}$, making use of the various $G$-submodules
of $\boldsymbol{\Lambda}$ we have encountered so far. As regards the
degeneration picture, this is a very natural filtration for $\boldsymbol{%
\Lambda}$ to consider: Recall~\cite[Lemmas~4.4 and~5.4]%
{IvanovaPallikaros2019} that, in the case of an infinite field $\mathbb{F}$,
any structure vector in $\mathcal{M}^{\ast\ast}-\mathcal{M}^\ast$
degenerates to $\boldsymbol{\eta}$ and any structure vector in $\boldsymbol{%
\Lambda }-\mathcal{M}^{\ast\ast}$ degenerates to $\boldsymbol{\delta}$.
Moreover, in the present paper, in Proposition~\ref{PropLinDegEta} we have
established a `linear degeneration analogue' of~\cite[Lemma~4.4]%
{IvanovaPallikaros2019} for $|\mathbb{F}|>2$ using transvections, and in
Example~\ref{ExampleApplThLD}(ii), as an immediate application of Theorem~%
\ref{TheorLinDeg}, we obtained a `linear degeneration analogue' of~\cite[%
Lemma~5.4]{IvanovaPallikaros2019} for $|\mathbb{F}|>4$. It will turn out
from the following discussion that, under our standing assumption for this
section that $|\mathbb{F}|>2$, the $G$-modules $\mathcal{M}^{\ast\ast}/%
\mathcal{M}^\ast$ and $\boldsymbol{\eta}(\mathbb{F} G)\,(=\mathcal{U})$
have, up to $G$-isomorphism, the same composition factors. Similarly, for
the $G$-modules $\boldsymbol{\Lambda}/\mathcal{M}^{\ast\ast}$ and $%
\boldsymbol{\delta}(\mathbb{F} G)\,(=\mathcal{N})$.

\subsection{$G$-submodules of $\mathcal{M}^{\ast \ast }$}

Recall that the $G$-module structure of the submodule $\mathcal{M}^{\ast}$
of $\mathcal{M}^{\ast \ast }$ was completely determined in Section~\ref%
{SecM*}. In particular, the modules $\mathcal{M}^{\ast}_P$ (which are
irreducible and $G$-isomorphic to $\widehat V$) constitute a complete list
of $G$-submodules of $\mathcal{M}^\ast$. Moreover, $\mathcal{M}^\ast$ is a
completely reducible $G$-module isomorphic to $\widehat V\oplus\widehat V$.

Arguing as before, and using our results on transvection degenerations, we
can deduce that any $G$-submodule of $\mathcal{M}^{\ast \ast }$ which is not
contained in $\mathcal{M}^{\ast }$ necessarily contains $\mathcal{U}$. One
such submodule is $\mathcal{K}$. Considering the filtration $0\subset
\mathcal{U}\subset \mathcal{K}\subset \mathcal{M}^{\ast \ast }$ we see that $%
\mathcal{M}^{\ast \ast }/\mathcal{U}$ has exactly two composition factors,
both $G$-isomorphic to $\widehat{V}$ (see Proposition~\ref{PropIsomDualV}
and Corollary~\ref{CorolM**K}).

If $\mathrm{char}\mathbb{F}\nmid n-1$ (including $\mathrm{char}\mathbb{F}=0$%
), then $\mathcal{U}\cap \mathcal{M}^{\ast }=0$, by Proposition \ref%
{PropIntersecM*}. The $G$-submodule diagram is (with dimensions to the left
and right)%
\begin{equation*}
\begin{tabular}{lllclll}
&  &  & $\mathcal{M}^{\ast \ast }$ &  &  & $n^{3}/2-n^{2}/2+n$ \\
&  & $\mathbf{\diagup }$ &  & $\mathbf{\diagdown }$ &  &  \\
$n^{3}/2-n^{2}/2-n$ & $\mathcal{U}$ &  &  &  & $\mathcal{M}^{\ast }$ & $2n$
\\
&  & $\mathbf{\diagdown }$ &  & $\mathbf{\diagup }$ &  &  \\
&  &  & $0$ &  &  & $0$%
\end{tabular}%
.
\end{equation*}%
Here $\mathcal{M}^{\ast }$ is isomorphic to $\widehat{V}\oplus \widehat{V}$,
as we described, so of course $\mathcal{M}^{\ast \ast }/\mathcal{U}%
\backsimeq \widehat{V}\oplus \widehat{V}$, too. Moreover, $\mathcal{M}^{\ast
\ast }/\mathcal{M}^{\ast }$ is $G$-isomorphic to $\mathcal{U}$, and $%
\mathcal{U}$ is irreducible under the assumption on~$\mathbb{F}$.

\medskip

If $\mathop{\rm char}\nolimits\mathbb{F}\mid n-1$, then $\mathcal{U}\cap%
\mathcal{M}^{\ast}=\mathcal{M}^{\ast}_{(1,-1)}$, again by Proposition~\ref%
{PropIntersecM*}. It follows that $\mathcal{U}+\mathcal{M}^{\ast}$ is a $G$%
-submodule of $\mathcal{M}^{\ast\ast}$ of codimension $n$. We now have

\begin{equation*}
\begin{tabular}{ccccccc}
&  &  & $\mathcal{M}^{\ast \ast }$ &  &  & $n^{3}/2-n^{2}/2+n$ \\
&  &  & $\mathbf{\mid }$ &  &  &  \\
&  &  & $\mathcal{U+M}^{\ast }$ &  &  & $n^{3}/2-n^{2}/2$ \\
&  & $\mathbf{\diagup }$ &  & $\mathbf{\diagdown }$ &  &  \\
$n^{3}/2-n^{2}/2-n$ & $\mathcal{U}$ &  &  &  & $\mathcal{M}^{\ast }$ & $2n$
\\
&  & $\mathbf{\diagdown }$ &  & $\mathbf{\diagup }$ &  &  \\
&  &  & $\mathcal{M}_{(1,-1)}^{\ast }$ &  &  & $n$ \\
&  &  & $\mathbf{\mid }$ &  &  &  \\
&  &  & $0$ &  &  & $0$%
\end{tabular}%
.
\end{equation*}
Note that we still have that $\mathcal{M}^{\ast\ast}/\mathcal{U}%
\simeq\widehat V\oplus\widehat V$ since $\mathcal{K}/\mathcal{U}$ and $(%
\mathcal{U}+\mathcal{M}^\ast)/\mathcal{U}$ are two distinct $G$-submodules
of $\mathcal{M}^{\ast\ast}/\mathcal{U}$ both of dimension $n$. (Recall that $%
\mathcal{M}^{\ast\ast}/\mathcal{U}$ has exactly two composition factors
which are both $G$-isomorphic to $\widehat V$, so $\mathcal{M}^{\ast\ast}/%
\mathcal{U}$ has to be the direct sum of $\mathcal{K}/\mathcal{U}$ and $(%
\mathcal{U}+\mathcal{M}^\ast)/\mathcal{U}$, with each of these submodules
being isomorphic to $\widehat V$.) The factor module $(\mathcal{U}+\mathcal{M%
}^\ast)/\mathcal{M}^\ast$ is irreducible since it is $G$-isomorphic to $%
\mathcal{U}/\mathcal{M}^\ast_{(1,-1)}$. Note that in this case, again $%
\mathcal{M}^{\ast\ast}/\mathcal{M}^\ast$ has the same composition factors as
$\mathcal{U}$ but now it is not $G$-isomorphic to $\mathcal{U}$ as is easily
seen from the fact that $\widehat V$ appears as a top quotient of $\mathcal{M%
}^{\ast\ast}/\mathcal{M}^\ast$ but not of $\mathcal{U}$.

\subsection{The factor $\boldsymbol{\Lambda}/\mathcal{M}^{\ast\ast}$}

The aim of this last subsection is to refine the last part of the filtration
$0\subset \mathcal{M}^\ast \subset \mathcal{M}^{\ast\ast} \subset
\boldsymbol{\Lambda}$. As a consequence, combining with the results in the
previous subsections, this would enable us to obtain refinements of this
filtration which are in fact composition series for~$\boldsymbol{\Lambda}$.

\medskip

We consider the case $\mathop{\rm char}\nolimits\mathbb{F}\ne2$ first.

If $\mathop{\rm char}\nolimits\mathbb{F}\nmid n+1$, then $\mathcal{N}\cap%
\mathcal{M}^{\ast\ast}\,(=\mathcal{N}\cap\mathcal{M}^{\ast})=0$ by Remark~%
\ref{RemCM**}(iii),
so we obtain the $G$-submodule diagram
\begin{equation*}
\begin{tabular}{lcccccl}
&  &  & $\boldsymbol{\Lambda}$ &  &  &  \\
&  & $\mathbf{\diagup }$ &  & $\mathbf{\diagdown }$ &  &  \\
$\dfrac{n^{3}}2+\dfrac{n^{2}}2-n$ & $\mathcal{N}$ &  &  &  & $\mathcal{M}%
^{\ast\ast }$ & $\dfrac{n^{3}}2-\dfrac{n^{2}}2+n$ \\
&  & $\mathbf{\diagdown }$ &  & $\mathbf{\diagup }$ &  &  \\
&  &  & $0$ &  &  &
\end{tabular}%
\end{equation*}
Here, $\boldsymbol{\Lambda}/\mathcal{M}^{\ast\ast}$ is $G$-isomorphic to $%
\mathcal{N}$, and $\mathcal{N}$ is irreducible under the assumption on~$%
\mathbb{F}$.

\medskip

If $\mathop{\rm char}\nolimits\mathbb{F}\mid n+1$, then $\mathcal{N}\cap%
\mathcal{M}^{\ast\ast}=\mathcal{M}^{\ast}_{(1,1)}$, again by Remark~\ref%
{RemCM**}(iii),
so we have the diagram
\begin{equation*}
\begin{tabular}{lcccccc}
&  &  & $\boldsymbol{\Lambda}$ &  &  &  \\
&  &  & $|$ &  &  &  \\
&  &  & $\mathcal{N}+\mathcal{M}^{\ast\ast}$ &  &  & $n^3-n$ \\
&  & $\mathbf{\diagup }$ &  & $\mathbf{\diagdown }$ &  &  \\
$n^3/2+n^2/2-n$ & $\mathcal{N}$ &  &  &  & $\mathcal{M}^{\ast\ast}$ & $%
n^3/2-n^2/2+n$ \\
&  & $\mathbf{\diagdown }$ &  & $\mathbf{\diagup }$ &  &  \\
&  &  & $\mathcal{M}^{\ast}_{(1,1)}$ &  &  & $n$ \\
&  &  & $|$ &  &  &  \\
&  &  & $0$ &  &  &
\end{tabular}%
\end{equation*}
Note that $(\mathcal{N}+\mathcal{M}^{\ast\ast})/\mathcal{M}^{\ast\ast}$
(which is $G$-isomorphic to $\mathcal{N}/\mathcal{N}\cap\mathcal{M}%
^{\ast\ast}=\mathcal{N}/\mathcal{M}^{\ast}_{(1,1)}$) is irreducible by
Proposition~\ref{PropLinDegDelta}.

Moreover, $\boldsymbol{\Lambda}/(\mathcal{N}+\mathcal{M}^{\ast\ast})$ is $G$%
-isomorphic to $\widehat V$. To see this, we consider the map $\psi=\mathrm{%
tr}+\widetilde{\mathrm{tr}}$ from $\boldsymbol{\Lambda}$ to $\widehat V$.
This is a $G$-homomorphism which is easily seen to be surjective: note that $%
\widehat V$ is irreducible and $\psi(\mathbf{111})=2\hat v_1\ne0$ since $%
\mathop{\rm char}\nolimits\mathbb{F}\ne2$. On $\mathcal{M}^{\ast\ast}$, we
have $\psi(\boldsymbol{\lambda})=(n+1)\omega_{\boldsymbol{\lambda}}=0$ since
$\mathop{\rm char}\nolimits\mathbb{F}\mid n+1$ (see the discussion preceding
Proposition~\ref{PropTIntersM**}). By definition $\mathcal{N}\subseteq\ker%
\mathrm{tr}$, so $\mathcal{N}\subseteq\ker\widetilde{\mathrm{tr}}$ also,
since $\mathrm{tr}=\widetilde{\mathrm{tr}}$ on $\mathcal{C}$. We conclude
that $\mathcal{N}$ and $\mathcal{M}^{\ast\ast}$ are both contained in $%
\ker\psi$ and so $\mathcal{N}+\mathcal{M}^{\ast\ast}\subseteq\ker\psi$.
Since $\dim(\mathcal{N}+\mathcal{M}^{\ast\ast})=n^3-n=\dim\ker\psi$ we
conclude that $\ker\psi=\mathcal{N}+\mathcal{M}^{\ast\ast}$ and hence $%
\boldsymbol{\Lambda}/(\mathcal{N}+\mathcal{M}^{\ast\ast})$ is $G$-isomorphic
to $\widehat V$. The fact that $\widehat V$ appears as a top quotient of $%
\boldsymbol{\Lambda}/\mathcal{M}^{\ast\ast}$ but not as a top quotient of $%
\mathcal{N}$ ensures that these two $G$-modules are not isomorphic this time.

\medskip

Suppose now that $\mathop{\rm char}\nolimits\mathbb{F}=2$. From Remark~\ref%
{RemCM**}(iii) we get the following picture:
\begin{equation*}
\begin{tabular}{lcccccl}
&  &  & $\boldsymbol{\Lambda }$ &  &  &  \\
&  &  & $|$ &  &  &  \\
&  &  & $\mathcal{N}+\mathcal{M}^{\ast \ast }$ &  &  & $n^{3}/2+n^{2}/2+n$
\\
&  & $\mathbf{\diagup }$ &  & $\mathbf{\diagdown }$ &  &  \\
$n^{3}/2+n^{2}/2-n$ & $\mathcal{N}$ &  &  &  & $\mathcal{M}^{\ast \ast }$ & $%
n^{3}/2-n^{2}/2+n$ \\
&  & $\mathbf{\diagdown }$ &  & $\mathbf{\diagup }$ &  &  \\
&  &  & $\mathcal{U}$ &  &  & $n^{3}/2-n^{2}/2-n$ \\
&  &  & $|$ &  &  &  \\
&  &  & $0$ &  &  &
\end{tabular}%
\end{equation*}%
First observe that $(\mathcal{N}+\mathcal{M}^{\ast \ast })/\mathcal{M}^{\ast
\ast }\,(\simeq \mathcal{N}/\mathcal{U})$ is irreducible (see Remark~\ref%
{Rem10.1}(i)). Moreover, $(\mathcal{N}+\mathcal{M}^{\ast \ast })/\mathcal{N}%
\,(\simeq \mathcal{M}^{\ast \ast }/\mathcal{U})$ has precisely two
composition factors, both of which are $G$-isomorphic to $\widehat{V}$
(recall that $\mathcal{U}\subset \mathcal{K}\subset \mathcal{M}^{\ast \ast }$%
).

We consider the case $n$ is even first. Recalling from Section~\ref%
{SecSubmodulesCK} that $\boldsymbol{\Lambda }/\mathcal{C}\simeq \mathcal{K}$
as $G$-modules, we get from Remark~\ref{Rem10.1} that $\boldsymbol{\Lambda }/%
\mathcal{N}$ has precisely three composition factors, one of them $G$%
-isomorphic to $\mathcal{U}$ while the remaining two are $G$-isomorphic to $%
\widehat{V}$. We conclude that in this case $\boldsymbol{\Lambda }/(\mathcal{%
N}+\mathcal{M}^{\ast \ast })$ is $G$-isomorphic to $\mathcal{U}$ (and it is
an irreducible $G$-module).

Finally, suppose that $n$ is odd. Again from Remark~\ref{Rem10.1} we get
that in this case $\boldsymbol{\Lambda }/\mathcal{N}$ has precisely four$%
\mathcal{\ }$composition factors, three $\mathcal{\ }$of which are $G$%
-isomorphic to $\widehat{V}$ while the fourth is $G$-isomorphic to $\mathcal{%
U}/\mathcal{M}_{(1,1)}^{\ast }$. It follows that $\boldsymbol{\Lambda }/(%
\mathcal{N}+\mathcal{M}^{\ast \ast })$ has precisely two composition
factors, which are exactly the two$\mathcal{\ }$composition factors of $%
\mathcal{U}$.

Consider now the $G$-submodule $(\mathcal{T}\cap\widetilde{\mathcal{T}})+%
\mathcal{M}^{\ast\ast}$ of $\boldsymbol{\Lambda}$. Clearly $\mathcal{N}+%
\mathcal{M}^{\ast\ast}\subseteq(\mathcal{T}\cap\widetilde{\mathcal{T}})+%
\mathcal{M}^{\ast\ast}$ since $\mathcal{N}\subset\mathcal{T}\cap\widetilde{%
\mathcal{T}}$. Moreover, invoking Propositions~\ref{PropL/TIsomToHatV} and~%
\ref{PropDefCondM**} and Corollary~\ref{CorTTM**}, we get that $\dim((%
\mathcal{T}\cap\widetilde{\mathcal{T}})+\mathcal{M}^{\ast%
\ast})=(n^3-2n)+(n^3/2-n^2/2+n)-(n^3/2-n^2/2)=n^3-n$. We conclude that $(%
\mathcal{T}\cap\widetilde{\mathcal{T}})+\mathcal{M}^{\ast\ast}$ properly
contains $\mathcal{N}+\mathcal{M}^{\ast\ast}$, and in the filtration $%
0\subset \mathcal{N}+\mathcal{M}^{\ast\ast}\subset(\mathcal{T}\cap\widetilde{%
\mathcal{T}})+\mathcal{M}^{\ast\ast} \subset\boldsymbol{\Lambda}$ the last
two factors are irreducible as $G$-modules.

The above discussion also verifies that in all four subcases considered
above the $G$-modules $\boldsymbol{\Lambda}/\mathcal{M}^{\ast\ast}$ and $%
\mathcal{N}\,(=\boldsymbol{\delta}(\mathbb{F} G))$ have the same composition
factors.

\end{document}